\theoremstyle{plain}
\newtheorem{thm}{Theorem}[section]
\newtheorem{cor}[thm]{Corollary}
\newtheorem{lem}[thm]{Lemma}
\theoremstyle{definition}
\newtheorem{defi}[thm]{Definition}
\newtheorem{rem}[thm]{Remark}
\begin{document}

\title[Axiomatisability problems for $S$-Posets
 ]{Axiomatisability problems for $S$-Posets}
\subjclass{20 M 30, 03 C 60}
\keywords{axiomatisability, free, projective, flat, $S$-posets}
\date{\today}

\author{Victoria Gould}
\email{varg1@york.ac.uk}

\author{Lubna Shaheen}
\email{lls502@york.ac.uk}
\address{Department of Mathematics\\University
  of York\\Heslington\\York YO10 5DD\\UK}

\begin{abstract} Let $\mathcal{C}$ be a class of algebras of a given fixed type $\tau$. Associated with the type is a first order language $L_{\tau}$. One can then
ask the question, when
is the class $\mathcal{C}$ axiomatisable by sentences of $L_{\tau}$?
In this paper we will be considering axiomatisability 
problems for classes of left $S$-posets over a pomonoid $S$ (that is, a monoid $S$ equipped
with a partial order compatible with the  binary operation). We aim to determine the pomonoids $S$ such that certain categorically defined classes are axiomatisable.
The classes we
consider are the free $S$-posets, the projective $S$-posets and classes arising from flatness properties. Some of these cases have been studied in a recent article by Pervukhin and Stepanova. We present some general strategies to determine axiomatisability, from which their results for the classes of weakly po-flat and po-flat $S$-posets will follow. We also consider a number of classes not previously examined.
\end{abstract}

\maketitle

\section{Introduction and Preliminaries }\label{sec:intro}

A {\em pomonoid} is a monoid $S$ with a partial order $\leq$  which is compatible with the binary operation. Just as the representation of a monoid $M$ by mappings of sets gives us the theory of $M$-acts, representations of a pomonoid $S$ by order-preserving maps of partially ordered sets gives us $S$-posets. Thus a left $S$-poset is a non-empty partially ordered set
$A$ on which $S$-acts on the left, that is, 
there is a map $S\times A\rightarrow A$, where
$(s,a) \mapsto sa$ such that for all $s,t\in S$ and
$a\in A$,
\[s(t(a)) = (st)a\mbox{ and } 1  \, a = a\]
such that the map is monotone in both co-ordinates, that is, for all
$s,t\in S$ and $a,b\in S$ with $a\leq b$,
\[sa\leq ta\mbox{ and  }sa\leq sb.\]
The class of all left $S$-posets is denoted by $S$-Pos. It is worth pointing out
in this Introduction that $S$-posets (indeed, pomonoids) are not merely
algebras, they are relational structures. As such, care is needed to
take account of the partial order relation, particularly 
when considering congruences.

A morphism $\phi:A\to B$ from a left $S$-poset $A$ to a left $S$-poset $B$ is called an {\em $S$-poset morphism} 
or more briefly, {\em $S$-pomorphism},
 if it preserves the action of $S$ (that is, it is an $S$-act morphism) and the ordering on $A$. In other words, for all $a,b\in A$ with $a\leq b$ and $s\in S$ we have
 \[(as)\phi=a\phi s\mbox{ and }a\phi\leq b\phi.\]
It
  is  an {\em isomorphism} if, in addition, it is a bijection such that
  the inverse is also an $S$-pomorphism, that is, for all $a,b\in B$ with
  $a\leq b$ we have that  $ a\phi \leq  b\phi $ in A. We then say that $A$ and $B$ are {\em isomorphic} and write $A \cong B$. Note that a bijective $S$-pomorphism need not to be isomorphism.

We denote the category of left $S$-posets and $S$-pomorphisms by ${\mathbf S}$-{\bf Pos}. Dual definitions give us the class Pos-$S$ of
right $S$-posets and the corresponding notion of $S$-pomorphisms give
us  
the category {\bf Pos}-${\mathbf S}$ of right $S$-posets and $S$-pomorphisms. 

The study of $M$-acts over a monoid $M$ has been well established since the 1960s, and received a boost following the publication of the monograph \cite{kilpknauer} in 2000.
On the other hand, the investigation
 of $S$-posets, initiated by 
Fakhruddin in the 1980s \cite{fak:1986}, \cite{fak:1988}, was not taken up
again 
until this millenium, which has seen a burst of activity on this topic,
mostly (but not exclusively) 
concentrating on projectivity and various notions of flatness for $S$-poset, as we do here. Definitions and concepts relating to 
flatness are given in Section~\ref{sec:prelims}; an
excellent survey is given in \cite{syd:conf}.

Associated with the class $S$-Pos for a pomonoid $S$ we have a first order language
$L^{\leq}_S$, which has no constant symbols, a unary function symbol $\lambda_s$ for each
$s\in S$, and (other than $=$), a single relational symbol $\leq$ with
$\leq$ being binary. An $S$-poset provides an interpretation of $L^{\leq}_S$ in the
obvious way, indeed in $L^{\leq}_S$ we write $sx$ for $\lambda_s(x)$.
A class of  $\mathcal{C}$ of left $S$-posets is {\em axiomatisable} (or {\em elementary}) if there
is a set of sentences $\Pi$ of $L^{\leq}_S$ such that for any 
member $A$ of $\mathcal{C}$,
$A$ lies in $\mathcal{C}$ if and only if all sentences of $\Pi$ are true in
$A$, that is, $\mathcal{C}$ is a {\em model} of $\Pi$. We say in this case
that $\Pi$ {\em axiomatises} $\mathcal{C}$. We note that $S$-Pos
itself is axiomatisable amongst all interpretations of $L^{\leq}_S$. For any $s,t\in S$ 
and $u,v\in S$ with $u\leq v$ we define sentences
\[\varphi_{s,t}:= (\forall x)\big(s(t(x))=(st)x\big),\,
\theta_s:= (\forall x,y)\big(x\leq y\rightarrow sx\leq sy\big)
\mbox{ and }
\psi_{u,v}:= (\forall x)(ux\leq vx).\]
Then $\Pi_S$ axiomatises $S$-Pos where
\[\Pi_S=\{ (\forall x)(1\, x=x)\}\cup\big\{ \varphi_{s,t}:s,t\in S\big\}
\cup\big\{ \theta_s:s\in S\big\}\cup\big\{ \psi_{u,v}:u,v\in S,u\leq v\big\}.\]

Some  classes of left $S$-posets
are axiomatisable for {\em any} monoid $S$. For example, the class $\mathcal{T}$ of 
left $S$-posets with the trivial partial order is axiomatised by 
\[\Pi_S\cup\{ (\forall x,y)\big( x\leq y\rightarrow x=y\big)\}.\]
To save repetition, we will assume from now on that when axiomatising a class of
left $S$-posets, $\Pi_S$ is understood, so that we would say $\{ (\forall x,y)\big( x\leq y\rightarrow x=y\big)\}$ axiomatises $\mathcal{T}$.
 Other
natural classes of left $S$-posets are axiomatisable for some
pomonoids and not for others and it is our aim here to investigate the monoids that
arise.

Corresponding questions for classes of $M$-acts over a monoid $M$ have
been answered in \cite{gould, step, bulmanfleminggould}
and \cite{gould:tartu}, see also the survey article \cite{gouldpalyutin}. 
The classes of projective (strongly flat, po-flat, weakly po-flat) left $S$-posets $
\mathcal{P}r$($\mathcal{SF},\mathcal{PF},\mathcal{WPF}$) have recently been
considered in \cite{step:2009} (which uses slightly different
terminology;
the results also appearing in
\cite{shaheen:2010}) as has the class $\mathcal{F}r$ of
free left $S$-posets in the case where $S$ has only 
finitely many right ideals. We note that many of the 
techniques of \cite{step:2009} follow those in the $M$-act case 
and, for this reason, we aim here to produce two general strategies 
that will deal with a number of axiomatisability questions for classes 
of $S$-posets (and, with minor adjustment, $M$-acts). In particular they
may be applied to $\mathcal{PF}$ and $\mathcal{WPF}$. Just as many
concepts of flatness that are equivalent for $R$-modules over a unital ring $R$ are different for $M$-acts, so many concepts that coincide for $M$-acts split
for $S$-posets. Thus \cite{step:2009} left a number of classes open; 
we address many of them here, with both our general techniques and 
ad hoc methods.
 
The structure of the paper is as follows. After
Section~\ref{sec:prelims} which gives brief details of the concepts
required to follow this article, we present  in
Section~\ref{sec:tossings}
our {\em general} axiomatisability
results, which apply to various classes defined by  flatness
properties.
There are two kinds of results,
both phrased in terms of `replacement tossings'; we show how they 
may be applied to reproduce the results of \cite{step:2009}
determining for which pomonoids $\mathcal{PF}$ or
$\mathcal{PWF}$ are axiomatisable, together with a number of other applications. In
Section
~\ref{sec:specific} we then consider classes defined by  flatness
conditions that translate into  so called `interpolation conditions'. In these
cases we can give rather more direct arguments, avoiding the concept of
replacement tossing. Section~\ref{sec:projfree} briefly visits the
question of axiomatisability of $\mathcal{F}r$ and $\mathcal{P}r$; the
results here are easily deducible from the corresponding ones for
$M$-acts. Finally in Section~\ref{sec:open} we present some open problems.

\section{Preliminaries: flatness properties for $S$-posets}\label{sec:prelims}

Free and projective $S$-posets have the standard categorical
definitions.
We remark that \cite{step:2009} distinguishes between $S$-posets over a
pomonoid
$S$ that
are
free over posets and those free over sets: the free $S$-posets
we consider here are what \cite{step:2009} would refer
to as {\em free over sets}. The classes of free (projective) left
$S$-posets are denoted by $\mathcal{F}r$ ($\mathcal{P}r$), respectively.
The structure of $S$-posets in $\mathcal{F}r$  and $\mathcal{P}r$ is
transparent.

First note that for a symbol $x$ we let
$Sx=\{ sx \mid s\in S\}$ be a set of elements of $S$ such that  $Sx$ becomes a left $S$-poset (isomorphic to
$_SS$) if we define $s(tx)=(st)x$ for all $s,t\in S$ and $sx\leq tx$
if and only if $s\leq t$ in $S$.

\begin{thm}\label{thm:freeproj}\cite{shiliuwangbulmanfleming} $(i)$ An 
$S$-poset $A$ is free on a set
$X$ if and only if
$A\cong \bigcup_{x\in X}Sx$ where for all $x,y\in X$ and $s,t\in S$,
\[sx\leq ty\mbox{ if and only if }x=y\mbox{ and }s\leq t.\]
$(ii)$ An $S$-poset is projective if and only if it is isomorphic to a
disjoint union of incomparable $S$-posets of the form $Se$, where $e$ is idempotent.
\end{thm}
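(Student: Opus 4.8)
The plan is to prove both characterisations by establishing the relevant universal properties in the category $S$-Pos. For part $(i)$, I would first verify that an object of the stated form is indeed free. Given a set $X$, form $A=\bigcup_{x\in X}Sx$ with the order specified, where each $Sx\cong {}_SS$ and elements from distinct copies are incomparable. The inclusion $X\to A$ sending $x\mapsto 1x$ should be shown universal: given any left $S$-poset $B$ and any map $f\colon X\to B$ of sets, I would define $\bar f\colon A\to B$ by $\bar f(sx)=s\,f(x)$ and check it is the unique $S$-pomorphism extending $f$. Well-definedness uses that each $sx$ has a unique expression (since $sx=tx$ forces $s=t$ by the order condition), $S$-equivariance is immediate, and order-preservation follows because $sx\leq ty$ in $A$ forces $x=y$ and $s\leq t$, whence $s\,f(x)\leq t\,f(x)$ in $B$ by monotonicity of the action (axiom $\psi_{s,t}$). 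Conversely, if $A$ is free on $X$, I would use uniqueness of free objects up to isomorphism: since the displayed $\bigcup_{x\in X}Sx$ is free on a set of the same cardinality as $X$, the two free objects are isomorphic, giving the stated form.

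For part $(ii)$, the strategy is the familiar one that projectives are precisely the retracts of frees. First I would record that any $Se$ with $e^2=e$ is projective: the map ${}_SS\to Se$, $s\mapsto se$, is a retraction (with section $se\mapsto se$ viewed inside $S$ via $e\cdot e=e$), exhibiting $Se$ as a retract of the free object ${}_SS$, and a disjoint union of incomparable projectives is projective since coproducts of projectives are projective and incomparability ensures no order relations between summands interfere with the pomorphism lifting. For the converse, given a projective $S$-poset $P$, I would choose a free $S$-poset $F=\bigcup_{x\in X}Sx$ with an epimorphism $\pi\colon F\to P$ (for instance, take $X$ to be the underlying set of $P$ and send $x\mapsto x$); projectivity yields a section $\iota\colon P\to F$ with $\pi\iota=\mathrm{id}_P$, so $P$ is (isomorphic to) a retract of $F$. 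The task is then to analyse the image of the idempotent $\iota\pi$ on $F$ and decompose it into pieces of the form $Se$.

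The hard part will be this decomposition step in $(ii)$: extracting, from an arbitrary retract of a disjoint union of copies of ${}_SS$, the precise internal structure as a disjoint union of incomparable $Se$'s. Concretely, for each connected component one must locate a generator, show that the idempotent $\iota\pi$ restricted there corresponds to right multiplication by a well-chosen idempotent $e\in S$, and verify that the order inherited from $F$ makes distinct components incomparable. The order-theoretic bookkeeping is what distinguishes the $S$-poset case from the plain $M$-act case: one must check not only that the underlying act decomposes as $\bigsqcup Se$, but that the partial order restricts correctly on each $Se$ (matching the order $se\leq te\iff se\leq te$ in $S$) and that no comparabilities are introduced across summands. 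I expect the algebraic decomposition to follow the $M$-act template closely, with the genuinely new work being the verification that the retraction respects and reflects the order so that the summands are genuinely incomparable, as claimed.
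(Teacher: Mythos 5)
A preliminary remark: the paper offers no proof of this statement --- Theorem~\ref{thm:freeproj} is quoted directly from \cite{shiliuwangbulmanfleming} with only a citation --- so there is no in-paper argument to compare you against, and I can only judge the proposal on its own terms. Your part $(i)$ is correct and essentially complete: the universal property is taken with respect to maps of \emph{sets} $X\to B$, which is the right notion (the paper stresses that ``free'' here means free over sets, not over posets); well-definedness of $\bar f$, order-preservation via $\psi_{s,t}$, and the converse by uniqueness of free objects are exactly the points that need checking. Your preliminary observations for part $(ii)$ are also sound, including the implicit fact that the section $\iota$ is automatically order-reflecting because $\pi\iota=\mathrm{id}_P$, so that $P$ really is isomorphic \emph{as an $S$-poset} to its image in $F$.

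The gap is in the step of part $(ii)$ that you yourself label ``the hard part'': passing from ``$P$ is a retract of $\bigcup_{x\in X}Sx$'' to ``$P$ is a disjoint union of incomparable $Se$'s'' is only described, not carried out, and it is the entire content of the statement. To close it you need, concretely: (a) every $S$-poset decomposes uniquely into indecomposable (connected) components, where connectivity is generated by the $S$-action together with comparability, so that distinct components are automatically incomparable --- this is the genuinely order-theoretic input, and it is what turns ``disjoint'' into ``incomparable''; (b) the pomorphic image of an indecomposable $S$-poset is indecomposable, hence lands inside a single component of the target; (c) the components of $\bigcup_{x\in X}Sx$ are precisely the $Sx\cong{}_SS$; and (d) an idempotent $S$-po-endomorphism of ${}_SS$ is right translation $s\mapsto se$ by the idempotent $e=f(1)$, with image $Se$ carrying the order induced from $S$. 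Granting these, each component $P_i$ of $P$ satisfies $\iota(P_i)\subseteq Sx$ for a single $x$ by (b) and (c); then $\pi(Sx)$ is connected and contains the full component $P_i$, so $\pi(Sx)=P_i$, exhibiting $P_i$ as a retract of ${}_SS$, and (d) identifies it as some $Se$. None of (a)--(d) is hard, but your proposal asserts the outcome of this analysis rather than performing it, and in particular never isolates (a), without which ``incomparable'' in the conclusion has not been earned.
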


As in the unordered case, it is clear that
every free $S$-poset is projective and (provided $S$ has
idempotents other than $1$, the converse
is not true.

To define notions of flatness, we need to consider the tensor products  of
$S$-posets.  Let $A$ be a right $S$-poset and $B$ a left $S$-poset.
The tensor
 product, which is denoted by $A \otimes B$,
 is the quotient of  $A \times B$, which considered as an $S$-poset under trivial $S$-action, 
 by the order congruence relation $\theta$ on $A \times B$ generated by 
\[\{(as,b),(a,sb): s \in S, a \in A ,b \in B \}.\]
We will denote the equivalence class of $ (a,b)  \in A \times B$ 
with respect to congruence $\theta$ by $ a \otimes b$.
We say a little more about order congruences in Section~\ref{sec:tossings}.
   The following lemma explains the ordering in $A \otimes B$.

\begin{lem}\label{lem:ptossing}\cite{shiliuwangbulmanfleming} Let 
$S$ be a pomonoid, let 
$A$ be a right $S$-poset, $B$ a left $S$-poset, $a , a' \in A$, and $b,b' 
\in B$. Then $ a \,\otimes \,b \leq a' \,\otimes \, b' $ in $A \otimes B$ if and only if there exists $a_2,a_3,\cdots,a_m \in A$, $b_1,b_2,\cdots,b_m \in B$ and $s_1,t_1,\cdots,s_m,t_m \in S$ such that 
\[\begin{array}{rclrcl}
                &    &                    & b& \leq & s_1 b_1\\
                 a s_1 & \leq & a_2 t_1        & t_1 b_1 & \leq & s_2 b_2\\   
                a_2 s_2 & \leq & a_3 t_2     &  t_2 b_2 & \leq & s_3 b_3 \\
              &  \vdots & &&\vdots&  \\
 a_m s_m & \leq & a' t_m          & t_m b_m & \leq & b' 
\end {array} \]

\noindent It follows that $ a' \, \otimes b^{'} \leq a \, \otimes \, b $ if and only if there exists $ c_2 , \cdots c_n \in A $ and $d_1 ,\cdots ,d_n \in B$ and $ u_1, v_1, \cdots , u_n, v_n  \in S$ such that  

\[\begin{array}{rclrcl}

               &    &                    & b'& \leq & u_1 d_1\\
              a' u_1  & \leq & c_2 v_1          & v_1 d_1 & \leq & u_2 d_2 \\
              c_2 u_2 & \leq & c_3 v_2         & v_2 d_2 & \leq & u_3 d_3    \\
             &  \vdots &  && \vdots& \\
              c_n u_n & \leq & a v_n          & v_n d_n & \leq & b
\end{array}\]
Thus $ a \otimes b = a' \otimes b' $ in $A \otimes B$ if and only if 
$(*)$ and $(**)$ exist.

 \end{lem}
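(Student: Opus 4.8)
The plan is to prove Lemma~\ref{lem:ptossing} by unwinding the definition of the tensor product as a quotient of $A\times B$ by the order congruence $\theta$ generated by the set
\[E=\{\big((as,b),(a,sb)\big):s\in S,\,a\in A,\,b\in B\}.\]
The key observation is that an order congruence on a poset is generated from a relation by \emph{alternately} applying two operations: taking the (ordinary) congruence closure in the algebraic sense, and then closing up under the partial order via chains of comparabilities. Concretely, $a\otimes b\leq a'\otimes b'$ should hold precisely when there is a finite sequence of elements of $A\times B$ beginning at $(a,b)$ and ending at $(a',b')$, where consecutive terms are linked either by a generating relation from $E$ (in either direction, i.e.\ using $(cs,d)\leftrightarrow(c,sd)$) or by an instance of the order $\leq$ on $A\times B$ (which, since the action is trivial on $A\times B$ as a poset, means comparing coordinatewise). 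So first I would recall precisely the description of the order congruence generated by a relation, which is covered in the forthcoming Section~\ref{sec:tossings}.

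Next I would show that any such connecting sequence can be normalised into the staircase shape displayed in the statement. The idea is that each use of a generator from $E$ lets us shift a scalar across the tensor bar, and each use of the coordinatewise order lets us compare in $A$ (first coordinate) and in $B$ (second coordinate) simultaneously. By interleaving these, one reads off the right column of inequalities $b\leq s_1b_1,\ t_1b_1\leq s_2b_2,\ \ldots,\ t_mb_m\leq b'$ in $B$ and the left column $as_1\leq a_2t_1,\ a_2s_2\leq a_3t_2,\ \ldots,\ a_ms_m\leq a't_m$ in $A$. The forward direction (that such a scheme $(*)$ forces $a\otimes b\leq a'\otimes b'$) is the routine verification: each row corresponds to a legitimate move in the quotient, using $a_is_i\otimes b_i=a_i\otimes s_ib_i$ together with monotonicity of the action and the tensor map, so one chains the comparisons $a\otimes b\leq a_2\otimes t_1b_1\leq\cdots\leq a'\otimes b'$.

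The harder and more delicate direction is the converse: starting from an arbitrary $\theta$-chain witnessing $a\otimes b\leq a'\otimes b'$, I must reorganise the moves into exactly this alternating left/right staircase. The main obstacle is bookkeeping: a general chain may apply several $E$-moves consecutively, or apply order comparisons in a mixed fashion, and one has to argue that these can always be amalgamated and reordered without changing the endpoints, so as to produce the strict two-column format with matching indices $s_i,t_i$ and intermediate elements $a_i\in A$, $b_i\in B$. I would handle this by induction on the length of the chain, showing that adjacent moves of the same type can be merged (using transitivity of $\leq$ and composition of the shifting relations) and that an order move followed by an $E$-move can be commuted past one another at the cost of introducing one new intermediate pair. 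Once the first assertion is established, the second (the description of $a'\otimes b'\leq a\otimes b$) follows immediately by symmetry, simply relabelling the roles of the two elements and renaming the scalars and intermediates as $u_i,v_i,c_i,d_i$; and the final equality statement is then just the conjunction of the two inequalities, i.e.\ that both $(*)$ and $(**)$ exist.
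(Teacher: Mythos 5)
The paper does not actually prove Lemma~\ref{lem:ptossing}: it is quoted from \cite{shiliuwangbulmanfleming}, so there is no in-paper argument to measure your attempt against. Your outline is nonetheless the correct and standard one, essentially the proof in that reference. Two remarks. First, your description of the order congruence is slightly loose but lands in the right place: since the $S$-action on $A\times B$ is trivial, the order on $A\otimes B$ is induced by the least preorder on $A\times B$ containing the coordinatewise order and the generating pairs $(as,b)$, $(a,sb)$ in both directions, and membership in that preorder is exactly witnessed by finite chains of $\leq$-steps and $E^{\pm}$-steps; no further closure under operations is needed. Second, the one thin spot is the normalisation in the converse direction. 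You do not really need to ``commute'' an order move past an $E$-move; the cleaner induction on chain length shows that a scheme $(*)$ followed by one further elementary step is again a scheme: a terminal order step $(a',b')\leq(a'',b'')$ is absorbed into the last row by transitivity; a terminal $E$-step $(cu,d)\mapsto(c,ud)$ with $a'=cu$, $b'=d$ is absorbed by replacing $t_m$ with $ut_m$; and the reverse $E$-step $(c,ud)\mapsto(cu,d)$ with $b'=ud$ is handled by appending one row with $s_{m+1}=u$, $t_{m+1}=1$, $b_{m+1}=d$. Equivalently, and perhaps most economically, one checks that ``there exists a scheme $(*)$ from $(a,b)$ to $(a',b')$'' is itself reflexive, transitive, contains $\leq$ and $E^{\pm}$, and is contained in any preorder with these properties, hence coincides with the tensor-product order. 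Your final two observations --- that $(**)$ is $(*)$ with the roles of the pairs exchanged, and that equality is the conjunction of the two inequalities by antisymmetry of the quotient order --- are correct as stated.
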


\begin{defi}\label{ptossing}
The sequence $(*)$ is called an {\em ordered tossing} $\mathcal{T}$
of length $m$
from $(a,b)$ to $(a',b')$. The {\em ordered skeleton}
of $\mathcal{S(T)}$ is the sequence $\mathcal{S(T)}
=(s_1,t_1,\cdots,s_m,t_m)$.
The two sequences  $(*)$ and $(**)$  constitute a 
{\em double ordered tossing} $\mathcal{DT}$ of length $m+n$, 
from $(a,b)$ to $(a',b')$
with {\em double ordered skeleton} $$\mathcal{S(DT)}= 
(s_1,t_1,\cdots,s_m,t_m,u_1,v_1,\cdots,u_n,v_n).$$   
We may also write $\mathcal{S(DT)}=(\mathcal{S}_1,\mathcal{S}_2)$
where \[\mathcal{S}_1=(s_1,t_1,\cdots,s_m,t_m)\mbox{  and }
\mathcal{S}_2=(u_1,v_1,\hdots, u_n,v_n).\]

\end{defi}

As in the case of $M$-acts different notions of flatness are 
drawn from the tensor functor 
\[-\otimes B : \mathbf{Pos}\mbox{{-\bf
S}} \to\mathbf{ Pos}\]
where if $A, A'$ are right $S$-posets and $f:A\rightarrow A'$ is
a pomorphism, 
\[A\mapsto A\otimes B
\mbox{ and }f\mapsto f \otimes I_{B}\]
and $$(a \otimes b)(f \otimes I_{B})=af\otimes b. $$

\begin{defi} An $S$-pomorphism $f :A \to B$ between two left $S$-posets $A$ and $B$ is called an {\em embedding} if it satisfies the condition
$$a \leq a' \Leftrightarrow  af \leq a'f.$$
\end{defi}

Elementary considerations of partially ordered sets (regarded
as $S$-acts over a trivial pomonoid) tell us that
 monomorphisms and embeddings in $\mathbf{S}$-{\bf Pos},
and indeed bijections and isomorphisms, are not the same. This leads us
to two variations on notions of flatness.

An $S$-poset $A$ 
is called {\em flat} if the functor $ - \otimes B$ takes embeddings
 in the category of $\mathbf{Pos}$-{\bf S} to one-one maps
 in the category {\bf Pos} of posets. 
It is called { \em (principally) weakly flat } if the functor $ - \otimes B$ 
takes embeddings of (principal) right ideals of $S$ into $S$
 to one-one maps in the category {\bf Pos}. A left $S$-poset $B$ is 
called {\em strongly flat} if the functor $- \otimes B$ preserves 
subpullbacks and subequalizers 
or equivalently \cite{bulmanlaan} if $B$ satisfies Condition 
$(P)$ and Condition $(E)$ which are defined as follows:

\noindent Condition (P): for all $ b,b' \in B$ and $ s,s' \in S$ if $ s \, b \leq s' \,b' $ then there exists $ b'' \in B$ and $u,u' \in S$ such that $b= u \, b'' ,\, b' = u' \, b''$ and $ s\,u \leq s'\,u'$; 

\noindent Condition (E): for all $ b \in B$ and $ s,s' \in S$ if 
$ s \, b \leq s' \, b $ then there exists $ b'' 
\in B$ and $ u \in S$ such that $ b = u \,b''$ and $ s \,u  \leq s' \,
u.$ \\
Such flatness conditions, i.e. using elements of $S$ and $S$-posets
rather than tossings explictly, we call {\em interpolation conditions}.
Weaker than either (P) or (E) we have\\
Condition (EP): for all $b\in B$ and $s,s'\in S$, if $sb\leq s'b$ then
there
exists $b''\in B$ and $u,u'\in S$ such that $b=ub''=u'b''$ and 
$su\leq s'u'$. The unordered version of this condition was introduced
for $M$-acts in \cite{golchin:2007}.

In \cite {shi}  Shi defined notions of po-flat, weakly po-flat,
 prinicipally weakly po-flat $S$-posets, as follows:\\
 an $S$-poset $B$ is called {\em po-flat} if the
 functor $- \otimes B$ takes embeddings
 in the category of $\mathbf{Pos}$-{\bf S} 
to embeddings in  $\mathbf{Pos}$. It is  {\em (principally) weakly po-flat}
 if the functor
 $ - \otimes B$ preserves the embeddings of (principal) right ideals
 of $S$ into $S$.

In the theory of $M$-acts over a monoid $M$, it is
true that  all $M$-acts satisfy the unordered version
of Condition (P) if and only 
if $M$ is a group. We can, however,   find an $S$-poset
over an ordered group $S$ which 
does not satisfy Condition (P). With this in mind,
Shi \cite{shi} defined  another notion for a left $S$-poset $B$
similar to Condition (P), called Condition (P$_w$):\\ 
Condition (P$_w$): for all  $b,b' \in B$ and 
$s,s'\in S$ if $ s \,b \leq s'\, b'$ then there exists
 $b'' \in B$, $ u,u' \in S$ such that $ s \,u \leq s'\, u' ,\, 
b \leq u \, b'',\,\, u' \,b'' \leq b'. $

Further, let $G$ be an ordered group, then all $G$-posets satisfy 
Condition (P$_w$) \cite{shi}. Clearly (P) implies (P$_w$) and from
\cite{shi},
(P$_w$) implies po-flat.

Shi \cite{shi} has shown that a left $S$-poset $B$ is weakly po-flat 
if and only if it is principally weakly po-flat and 
satisfies:\\
 Condition (W): for any $ b,b' \in B$ and $ s,s' \in S$,
if $s b \leq s' b' $ then implies that there exists $ b'' \in B ,\,p
 \in sS, p' \in s'S$ such that $ p \leq p',\, sb \leq p b'' ,\,p' b'' 
\leq s' b'$. 
Shi's proof is along the same
lines as that for 
$M$-acts by Syd Bulman-Fleming and McDowell in \cite{kilpknauer}. , who
have proved that a left $S$-act $A$ is weakly flat if and only if it is principally weakly flat and satisfies a condition analogous to Condition $(W)$
for $S$-acts. A proof analogous to those in \cite{kilpknauer,shi} gives
the following.

\begin{lem}\label{lem:condu} Let $S$ be a pomonoid. A left $S$-poset $B$
is
weakly flat if and only if it is principally weakly flat and
satisfies:\\
Condition (U): for all $b,b'\in B$ and $s,s'\in S$, if
$sb=sb'$ then there exists $b''\in B$, $p\in sS,p'\in s'S$, with $p\leq
p'$ and $sb=pb''=p'b''=s'b'$.
\end{lem}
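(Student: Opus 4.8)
The plan is to translate the categorical definitions of weak and principal weak flatness into statements about equalities in tensor products, and then to read Condition~(U) off the tossings of Lemma~\ref{lem:ptossing}. Throughout I use the standard isomorphism of $S$-posets $S\otimes B\cong B$, $s\otimes b\mapsto sb$ (with inverse $b\mapsto 1\otimes b$), under which, for a right ideal $I$ of $S$ with inclusion $\iota\colon I\to S$, the map $\iota\otimes I_B\colon I\otimes B\to S\otimes B$ becomes $I\otimes B\to B$, $i\otimes b\mapsto ib$. Since a map of posets is one-one exactly when it reflects equality, $B$ is weakly flat precisely when for every right ideal $I$ and all $i,j\in I$, $b,b'\in B$, the equality $ib=jb'$ in $B$ forces $i\otimes b=j\otimes b'$ in $I\otimes B$; principal weak flatness is the same statement with $I$ restricted to principal right ideals $sS$. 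I shall also use freely that a tossing witnessing an inequality inside $sS\otimes B$ is, verbatim, a tossing inside $I\otimes B$ whenever $sS\subseteq I$, so that equalities established in a principal tensor product transfer up to $I\otimes B$.

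For the easier implication, suppose $B$ is principally weakly flat and satisfies Condition~(U), fix a right ideal $I$ and $i,j\in I$, $b,b'\in B$ with $ib=jb'$. Applying Condition~(U) to $ib=jb'$ produces $b''\in B$, $p=iu\in iS$ and $p'=jv\in jS$ with $p\leq p'$ and $ib=pb''=p'b''=jb'$. Principal weak flatness applied to $iS$ (the images $ib$ and $i(ub'')=pb''$ agree) gives $i\otimes b=i\otimes ub''=p\otimes b''$ in $I\otimes B$, and similarly $j\otimes b'=p'\otimes b''$; since $p\leq p'$ yields $p\otimes b''\leq p'\otimes b''$, we obtain $i\otimes b\leq j\otimes b'$. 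Applying Condition~(U) instead to the equality $jb'=ib$ gives, by the symmetric argument, $j\otimes b'\leq i\otimes b$, and the two inequalities combine to the required equality $i\otimes b=j\otimes b'$. The observation that equality of two elements is a pair of inequalities, so that Condition~(U) must be invoked in \emph{both} directions, is the one point needing care here.

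For the converse, weak flatness immediately specialises to principal weak flatness, so it remains to derive Condition~(U). Given $sb=s'b'$, set $I=sS\cup s'S$, which is a right ideal containing $s$ and $s'$. Since $sb=s'b'$ means $s\otimes b=s'\otimes b'$ in $S\otimes B$, weak flatness gives $s\otimes b=s'\otimes b'$ already in $I\otimes B$, so Lemma~\ref{lem:ptossing} supplies a (double) tossing between $(s,b)$ and $(s',b')$ all of whose left-hand entries lie in $I=sS\cup s'S$. The task is then to compress this multi-step tossing into the single-step data $b''$, $p\in sS$, $p'\in s'S$ of Condition~(U). This is the main obstacle: one argues by induction on the length of the tossing, using principal weak flatness to absorb consecutive steps whose left entries lie in the \emph{same} generator's ideal $sS$ (or $s'S$), so that only one genuine transition from $sS$ to $s'S$ survives; the inequality attached to that transition is what yields the comparison $p\leq p'$, while the standing equality $sb=s'b'$ upgrades the surviving relations to the equalities $sb=pb''=p'b''=s'b'$. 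Keeping track of the order relations through this reduction, so that the surviving crossing is oriented to give $p\leq p'$ rather than $p'\leq p$, is the delicate part, and is exactly where the argument parallels the tossing manipulations of \cite{kilpknauer} and \cite{shi}.
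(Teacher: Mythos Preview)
The paper does not give its own proof of this lemma; it simply says that ``a proof analogous to those in \cite{kilpknauer,shi} gives the following'' and states the result. Your forward direction (principally weakly flat plus Condition~(U) implies weakly flat) is correct and carefully written, and is exactly the argument one finds in those references, transported to the ordered setting.

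For the converse you have the right endpoint but you are making it harder than necessary. You propose to reduce the tossing over $sS\cup s'S$ by \emph{inductively absorbing} consecutive steps via principal weak flatness, as in the unordered Bulman-Fleming--McDowell proof. In the ordered setting this absorption is awkward, because an ordered tossing only gives inequalities in $B$, not equalities, so principal weak flatness (which concerns equalities) does not apply step-by-step. Fortunately you do not need it. From weak flatness and $sb=s'b'$ you obtain $s\otimes b\leq s'\otimes b'$ in $(sS\cup s'S)\otimes B$; in the corresponding ordered tossing the left-hand column starts in $sS$ and ends in $s'S$, so there is a first index $k$ with $a_k\in sS$ and $a_{k+1}\in s'S$. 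Setting $p=a_ks_k\in sS$, $p'=a_{k+1}t_k\in s'S$ and $b''=b_k$ gives $p\leq p'$ directly from that row, and chasing the inequalities of the tossing down to $B$ yields
\[
sb\ \leq\ pb''\ \leq\ p'b''\ \leq\ s'b'.
\]
Since $sb=s'b'$ by hypothesis, this chain collapses to the required equalities $sb=pb''=p'b''=s'b'$; this is precisely the ``squeeze'' you mention at the end, and it does all the work on its own, with no induction and no appeal to principal weak flatness. So your plan is sound, but the detour through PWF-absorption should be dropped in favour of the direct crossing-plus-squeeze argument.
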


We will denote the classes strongly flat, flat, weakly flat, 
principally weakly flat, po-flat, weakly po-flat, principally 
weakly po-flat left $S$-posets by
$$\mathcal{SF},\,\mathcal{F},\,\mathcal{WF},
\,\mathcal{PWF},\,\mathcal{PF},\,\mathcal{WPF},\,\mathcal{PWPF}$$ 
respectively. We will denote the classes of left $S$-posets satisfying
Conditions (P),   (E),   (EP),  (P$_w$),(W) and (U) by
\[\mathcal{P,E,EP,P}_w,\mathcal{W}\mbox{ and }\mathcal{U}.\]

Finally in our list of flatness properties
we turn out attention to those introducted in  \cite{golchin} 
by Golchin and Rezaei. They define Conditions (WP),(WP$_w$),(PWP)
and (PWP$_w$) for $S$-posets, which are derived from the concepts 
of subpullback diagrams in $\mathbf{S}$-{\bf Pos}.
 For details relating to subpullback diagrams in the category of 
$\mathbf{S}$-{\bf Pos} we refer the reader to \cite{golchin}. For our
purposes here it is enough to define (PWP) and (PWP$_w$) for a
left $S$-poset $B$:\\
Condition (PWP):  for all $b,b'\in B$ and $s\in S$,
if $sb\leq sb'$ then there exits $u,u' \in S$ and $b''\in B$
 such that $ b= u b'' \wedge b' = u' b''$ and $su \leq su'$;\\
Condition (PWP$_{w}$): for all $b,b '\in B$
and $s\in S$, if  $sb \leq sb'$ then there exist $u,u' \in S$ and $b''\in B$ such that 
$ b \leq  u b'' \wedge  u' b'' \leq b'$ and $su \leq su'$.

We denote by
\[\mathcal{WP,WP}_w,\mathcal{PWP}\mbox{ and }\mathcal{PWP}_w\]
the classes of left $S$-posets satisfying Conditions   
 (WP),(WP$_w$),(PWP)
and (PWP$_w$), respectively.

\begin{rem}\label{rem:table} \cite{golchin}\cite{shiliuwangbulmanfleming}
In $\mathbf{S}$-{\bf POS} we have the following implications, all of
which are known to be strict except for Condition (P$_w$) implies
po-flat:

\[\begin{array}{ccccccccccc}
 \mathcal{F}r& \Rightarrow &\mathcal{P}r&\Rightarrow &
\mathcal{SF}&\Rightarrow   &  \mathcal{P}          & \Rightarrow&
\mathcal{WP}
        &\Rightarrow &\mathcal{PWP}\\
& &&& &  &\Downarrow & & \Downarrow          &  & \Downarrow\\
 &   &&&  && \mathcal{P}_w   &\Rightarrow &\mathcal{WP}_w    &
\Rightarrow&
\mathcal{PWP}_{w}     \\ 
 &  &&&    &  & \Downarrow  &&\Downarrow    &          & \Downarrow \\
 &    &&&   &   & \mathcal{PF}&\Rightarrow &\mathcal{WPF} &\Rightarrow&
 \mathcal{PWPF} \\
 &  &&&    &    & \Downarrow & &\Downarrow             & & \Downarrow\\ 
   &     &&&   & &\mathcal{F} & \Rightarrow&\mathcal{WF} &\Rightarrow& 
\mathcal{PWF}  \end{array}\]

\end{rem}

We are interested in determining for which pomonoids are these classes
axiomatisable. Our major tool is that of an {\em ultraproduct}; further
details may be found in \cite{ck}.
The next result is crucial. 

\begin{thm}{\em(\L os's Theorem)}\label{thm:los}\cite{ck}
Let $L$ be a first order language, and let $\mathcal{C}$ be
a class of  $L$-structures. If $\mathcal{C}$ is
axiomatisable, then $\mathcal{C}$ is closed under ultraproducts.

\end{thm}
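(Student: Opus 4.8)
The statement as given is a consequence of the full form of \L os's theorem, which I would prove first. The plan is: (i) prove by induction on formulas that satisfaction in an ultraproduct is controlled by the ultrafilter, and then (ii) specialise to sentences to obtain closure of an axiomatisable class. The key point to flag at the outset is that the result is really about the \emph{fundamental theorem} on ultraproducts, of which the stated closure property is merely the sentence-level corollary.

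First I would fix a family $\{A_i : i\in I\}$ of $L$-structures and an ultrafilter $\mathcal{U}$ on $I$, and form the ultraproduct $A=\prod_{i\in I}A_i/\mathcal{U}$ as the quotient of the Cartesian product $\prod_{i\in I}A_i$ by the relation $(a_i)\sim(b_i)$ if and only if $\{i\in I: a_i=b_i\}\in\mathcal{U}$. Writing $[a]$ for the class of $(a_i)$, one interprets each function symbol and relation symbol of $L$ coordinatewise and then passes to $\sim$-classes; I would check that this is well defined, which uses only that $\mathcal{U}$ is closed under finite intersection and upward closed, that is, that it is a filter.

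The heart of the argument is the claim that for every formula $\varphi(x_1,\dots,x_n)$ and all $[a^1],\dots,[a^n]\in A$,
\[A\models\varphi([a^1],\dots,[a^n])\iff\bigl\{i\in I: A_i\models\varphi(a^1_i,\dots,a^n_i)\bigr\}\in\mathcal{U},\]
proved by induction on the construction of $\varphi$. The atomic case follows directly from the definition of the interpretation in $A$ and of $\sim$, while the conjunction and disjunction steps use that $\mathcal{U}$ is a filter. The two steps I expect to be the main obstacle, and where the ultrafilter hypothesis (rather than merely a filter) is essential, are negation and the existential quantifier: for negation one needs that for each index set $J\subseteq I$ exactly one of $J$, $I\setminus J$ lies in $\mathcal{U}$, and for $\exists x\,\varphi$ one must manufacture a single witness in $A$ from coordinatewise witnesses. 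The latter requires choosing representatives $(a_i)$ with $A_i\models\varphi(a_i,\dots)$ on a set in $\mathcal{U}$, taken arbitrary off that set, together again with the maximality of $\mathcal{U}$ for the converse direction.

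Finally I would deduce the stated theorem. Suppose $\Pi$ axiomatises $\mathcal{C}$ and each $A_i$ (for $i\in I$) lies in $\mathcal{C}$. For a sentence $\sigma\in\Pi$ we have $A_i\models\sigma$ for every $i$, so $\{i\in I: A_i\models\sigma\}=I\in\mathcal{U}$; applying the claim in the case $n=0$ gives $A\models\sigma$. As $\sigma\in\Pi$ was arbitrary, $A$ is a model of $\Pi$, whence $A\in\mathcal{C}$, and so $\mathcal{C}$ is closed under ultraproducts.
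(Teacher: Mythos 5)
The paper offers no proof of this statement: it is quoted verbatim from Chang and Keisler \cite{ck} and used as a black box throughout. Your proposal is the standard textbook argument, and it is correct in outline: you prove the fundamental theorem of ultraproducts (satisfaction of $\varphi$ at $[a^1],\dots,[a^n]$ in $\prod A_i/\mathcal{U}$ is equivalent to coordinatewise satisfaction on a set in $\mathcal{U}$) by induction on formulas, and then read off the stated closure property by taking $n=0$ and observing that each $\sigma\in\Pi$ holds at every index, so on the set $I\in\mathcal{U}$. You correctly isolate the two delicate induction steps: negation, where the maximality of $\mathcal{U}$ (for each $J\subseteq I$ exactly one of $J$, $I\setminus J$ lies in $\mathcal{U}$) is genuinely needed, and the existential quantifier, where a choice of coordinatewise witnesses must be assembled into a single element of the ultraproduct. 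One small refinement: the converse direction of the existential step needs only upward closure of the filter (the witness set for $\varphi$ is contained in the witness set for $\exists x\,\varphi$), so maximality is really consumed only at negation; this does not affect the validity of your argument. Since the paper only ever invokes the easy direction stated here (axiomatisable implies closed under ultraproducts), and not any converse characterisation, your proof covers exactly what the paper requires.
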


We now introduce a new notion of flatness, that can be adapted to many
of the classes above. 
Let ${\mathcal{C}}$ be a class of embeddings of right $S$-posets.
For example, $\mathcal{C}$ could be all embeddings, or all  embeddings of
right ideals into $S$ via inclusion maps. We say that a left
$S$-poset $B$ is {\em $\mathcal{C}$-flat} if the functor
 $-\otimes B$ maps every embedding $\mu:A\rightarrow A'\in
\mathcal{C}$ to a one-one  map 
$\mu\otimes I_B:A\otimes B\rightarrow A'\otimes B$. 
The class of $\mathcal{C}$-flat left $S$-posets 
is denoted by $\mathcal{CF}$. Similarly, if
$-\otimes B$ maps every embedding $\mu:A\rightarrow A'\in
\mathcal{C}$ to an embedding 
$\mu\otimes I_B:A\otimes B\rightarrow A'\otimes B$, then
we say that $B$ is {\em $\mathcal{C}$-po-flat} and we denote
the class of $\mathcal{C}$-po-flat left $S$-posets by $\mathcal{CPF}$.
Thus, if $\mathcal{C}$ is the
class of all embeddings of right $S$-posets, then
$\mathcal{CF}=\mathcal{F}$ and $\mathcal{CPF}=\mathcal{PF}$.

\section{ Axiomatisability of $\mathcal{CF}$ }\label{sec:tossings}

We describe our two general results involving `replacement tossings'. The first
characterise those pomonoids $S$ such that $\mathcal{CF}$
is axiomatisable, for a class $\mathcal{C}$ of right $S$-poset embeddings,
where $\mathcal{C}$ satisfies Condition (Free). This will enable us
to specialise to the case where $\mathcal{C}$ is the class of all 
right $S$-poset embeddings. For the second we consider an arbitary
class $\mathcal{C}$; we then specialise to the cases
where $\mathcal{C}$ consists of all inclusions
of (principal) right ideals into $S$.  We remark that similar methods can be applied
to axiomatisability problems for $S$-acts over a monoid $S$, as shown in 
\cite{shaheen:2010}.

\subsection{Axiomatisability of $\mathcal{CF}$  with Condition (Free)}\label{subsec:generala}\phantom{hhhhhhhhhhhhhhhhhhhhhhhhhhhhhhhhhhhhhhhhhhhhhhh}

It is convenient to introduce some notation. Let 

$$\mathcal{S}=(s_1,t_1,\hdots ,s_m,t_m)$$ 
be an ordered skeleton of length $m$. 

We define a formula 
 $\epsilon_{\mathcal{S}}$ of $R^{\leq}_{\mathcal{S}}$,
 where $R^{\leq}_{\mathcal{S}}$ is the first order language
 associated with right $S$-posets, as follows:
\[
\epsilon_{\mathcal{S}}(x,x_2,\cdots,x_m,x'):=
\big( xs_1  \leq x_2 t_1 \,\wedge\, x_2 s_2  \leq x_3 t_2 \,\wedge\, \hdots \,\wedge\, x_m s_m \leq x' t_m  \big) \]
and a formula $\theta_{\mathcal{S}}$ of $L^{\leq}_S$ by
\[\theta_{\mathcal{S}}(x,x_1,\cdots,x_m,x'):= \big(   x \leq s_1 x_1 \, \wedge\, t_1 x_1 \leq s_2 x_2\, \wedge \, \hdots \, \wedge\, t_m x_m \leq x'  \big) .\]
Suppose now that 

$$\mathcal{S}=(\mathcal{S}_1,\mathcal{S}_2)=(s_1,t_1,\hdots ,s_m,t_m, u_1,v_1,\hdots ,u_n,v_n)$$ 
is  a  double ordered skeleton of length $m+n$. 
We put
\[\delta_{\mathcal{S}}(x,x'):= 
(\exists x_2 \hdots \exists x_m \exists y_2\hdots \exists y_n ) 
\epsilon_{\mathcal{S}_1}(x,x_2,\hdots ,x_m,x')\,\wedge\,
\epsilon_{\mathcal{S}_2}(x',y_2,\hdots ,y_n,x).\]

On the other hand we define the formula 
$$\gamma_{\mathcal{S}}(x,x'):=(\exists x_1 \cdots \exists x_m\exists y_1 \cdots \exists y_n) \theta_{\mathcal{S}_1}(x,x_1,\hdots , x_m,x')
\, \wedge \, \theta_{\mathcal{S}_2}(x',y_1,\hdots ,y_n,x').$$

\begin{rem}\label{rem:formulas} Let $A,B$ be right and left $S$-posets, respectively,
let $a,a'\in A$ and $b,b'\in B$. 

$(i)$ The pair $(a,b)$ is connected to the pair $(a',b')$
via a double ordered tossing with double ordered skeleton $\mathcal{S}$ if and only if
$\delta_{\mathcal{S}}(a,a')$ is true in $A$ and
$\gamma_{\mathcal{S}}(b,b')$ is true in $B$.

$(ii)$ If $\delta_{\mathcal{S}}(a,a')$  is true in $A$ and $\psi:A\rightarrow A'$ is
a (right) $S$-pomorphism, then $\delta_{\mathcal{S}}(a\psi,a'\psi)$ is true
in $A'$.

$(iii)$ If   $\gamma_{\mathcal{S}}(b,b')$ is true in $B$ and $\tau:B\rightarrow B'$ is
an $S$-pomorphism, then $\gamma_{\mathcal{S}}(b\tau,b'\tau)$ is true in
$B\tau$.
\end{rem}

\begin{defi} We say that $\mathcal{C}$ {\em satisfies Condition (Free)} if for each double ordered skeleton $\mathcal{S}$ there is an embedding $\tau_{\mathcal{S}}:W_{\mathcal{S}} \to W^{'}_{\mathcal{S}}$ in $\mathcal{C}$ and $u_{\mathcal{S}},\,u^{'}_{\mathcal{S}} \in W_{\mathcal{S}}$ such that $\delta_{\mathcal{S}}(u_{\mathcal{S}}\tau_{\mathcal{S}},u^{'}_{\mathcal{S}}\tau_{\mathcal{S}})$ is true in $W^{'}_{\mathcal{S}}$ and further for any embedding $\mu:A \to A^{'} \in \mathcal{C}$  and any $a,a' \in A$  such that $\delta_{\mathcal{S}}(a \mu, a' \mu )$ is true in $A'$ there is a morphism $\nu:W^{'}_{\mathcal{S}} \to A'$ such that $u_{\mathcal{S}}\tau_{\mathcal{S}}\nu = a \mu,\,u^{'}_{\mathcal{S}}\tau_{\mathcal{S}}\nu = a' \mu $ and $W_{\mathcal{S}} \tau_{\mathcal{S}} \nu \subseteq A \mu.$

\end{defi}

\begin{lem}\label{lem:coflat} Let $\mathcal{C}$ be a class of embeddings of right $S$-posets satisfying Condition (Free). Then the following are equivalent for a left $S$-poset $B$:

(i) $B$ is $\mathcal{C}$-flat;

 (ii) $-\otimes B$ maps the embeddings  $\nu_{\mathcal{S}}:W_{\mathcal{S}} \to W^{'}_{\mathcal{S}}$ in the category {\bf Pos-}${\mathbf S}$ to monomorphisms in the category of ${\bf Pos}$, for every double ordered skeleton $\mathcal S$;

(iii) if $(\mu_{\mathcal{S}} \tau_{\mathcal{S}},b)$ and $(\mu^{'}_{\mathcal{S}} \tau_{\mathcal{S}},b')$ are connected by a double ordered tossing over $W^{'}_{\mathcal{S}}$ and $B$ with double ordered skeleton $\mathcal{S}$, then $(u_{\mathcal{S}},b)$ and $(u^{'}_{\mathcal{S}},b')$ are connected by a double ordered tossing over $W_{\mathcal{S}}$ and $B$.

\end{lem}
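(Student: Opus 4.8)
The plan is to prove the cyclic chain of implications $(i)\Rightarrow(ii)\Rightarrow(iii)\Rightarrow(i)$, using the formulas $\delta_{\mathcal S}$ and $\gamma_{\mathcal S}$ together with Remark~\ref{rem:formulas} and Lemma~\ref{lem:ptossing} as the translation dictionary between tensor-inequalities and tossings. The implication $(i)\Rightarrow(ii)$ is immediate: $\mathcal C$-flatness means $-\otimes B$ sends \emph{every} embedding in $\mathcal C$ to a monomorphism, and by Condition (Free) each $\tau_{\mathcal S}:W_{\mathcal S}\to W'_{\mathcal S}$ lies in $\mathcal C$, so in particular $\tau_{\mathcal S}\otimes I_B$ is a monomorphism. (I would note the paper writes $\nu_{\mathcal S}$ in (ii), which I read as the embedding $\tau_{\mathcal S}$ supplied by Condition (Free).)

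For $(ii)\Rightarrow(iii)$, first I would unwind what it means for $(u_{\mathcal S}\tau_{\mathcal S},b)$ and $(u'_{\mathcal S}\tau_{\mathcal S},b')$ to be connected by a double ordered tossing over $W'_{\mathcal S}$ and $B$ with skeleton $\mathcal S$. By Remark~\ref{rem:formulas}$(i)$ this says exactly that $\delta_{\mathcal S}(u_{\mathcal S}\tau_{\mathcal S},u'_{\mathcal S}\tau_{\mathcal S})$ holds in $W'_{\mathcal S}$ and $\gamma_{\mathcal S}(b,b')$ holds in $B$, and via Lemma~\ref{lem:ptossing} this is precisely the statement that
\[
u_{\mathcal S}\tau_{\mathcal S}\otimes b = u'_{\mathcal S}\tau_{\mathcal S}\otimes b'
\quad\text{in } W'_{\mathcal S}\otimes B.
\]
Because $\tau_{\mathcal S}\otimes I_B$ sends $u_{\mathcal S}\otimes b\mapsto u_{\mathcal S}\tau_{\mathcal S}\otimes b$ and $u'_{\mathcal S}\otimes b'\mapsto u'_{\mathcal S}\tau_{\mathcal S}\otimes b'$, injectivity of $\tau_{\mathcal S}\otimes I_B$ (from (ii)) forces $u_{\mathcal S}\otimes b = u'_{\mathcal S}\otimes b'$ in $W_{\mathcal S}\otimes B$. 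Translating back through Lemma~\ref{lem:ptossing} yields a double ordered tossing connecting $(u_{\mathcal S},b)$ and $(u'_{\mathcal S},b')$ over $W_{\mathcal S}$ and $B$, which is the conclusion of (iii). The only care needed is to confirm that the equality of tensors really does decode into the asserted tossing; this is just reading off the two halves $(*)$ and $(**)$ of Lemma~\ref{lem:ptossing}.

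The substantive implication, and where I expect the real work to be, is $(iii)\Rightarrow(i)$. Here I must start from an \emph{arbitrary} embedding $\mu:A\to A'$ in $\mathcal C$ and show $\mu\otimes I_B$ is one-one; so suppose $a\otimes b = a'\otimes b'$ in $A'\otimes B$ after applying $\mu$, i.e.\ $a\mu\otimes b = a'\mu\otimes b'$, and aim to deduce $a\otimes b = a'\otimes b'$ in $A\otimes B$. By Lemma~\ref{lem:ptossing} the hypothesis gives a double ordered tossing over $A'$ and $B$ with some skeleton $\mathcal S$, equivalently $\delta_{\mathcal S}(a\mu,a'\mu)$ holds in $A'$ and $\gamma_{\mathcal S}(b,b')$ holds in $B$. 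This is exactly the situation Condition (Free) is designed for: it furnishes a morphism $\nu:W'_{\mathcal S}\to A'$ with $u_{\mathcal S}\tau_{\mathcal S}\nu = a\mu$, $u'_{\mathcal S}\tau_{\mathcal S}\nu = a'\mu$, and $W_{\mathcal S}\tau_{\mathcal S}\nu\subseteq A\mu$. The plan is to pull the tossing back along $\nu$: since $\delta_{\mathcal S}(u_{\mathcal S}\tau_{\mathcal S},u'_{\mathcal S}\tau_{\mathcal S})$ holds in $W'_{\mathcal S}$ by Condition (Free), together with $\gamma_{\mathcal S}(b,b')$ in $B$ this puts us in the hypothesis of (iii), yielding a tossing connecting $(u_{\mathcal S},b)$ and $(u'_{\mathcal S},b')$ over $W_{\mathcal S}$ and $B$, i.e.\ $u_{\mathcal S}\otimes b = u'_{\mathcal S}\otimes b'$ in $W_{\mathcal S}\otimes B$. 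Applying the map $(\tau_{\mathcal S}\nu)\otimes I_B$ and using $W_{\mathcal S}\tau_{\mathcal S}\nu\subseteq A\mu$ should then transport this equality into $A\mu\otimes B$, and since $\mu$ is an embedding I can identify $A\mu$ with $A$ to conclude $a\otimes b = a'\otimes b'$ in $A\otimes B$. The delicate point — the main obstacle — is bookkeeping the constraint $W_{\mathcal S}\tau_{\mathcal S}\nu\subseteq A\mu$: I must check that the intermediate entries of the pulled-back tossing genuinely land inside the image $A\mu$ (not merely in $A'$), so that the resulting tossing really lives over $A$ rather than over $A'$. Getting this containment to thread through every step of the skeleton, rather than only at the endpoints, is the crux, and Condition (Free) is precisely the hypothesis that guarantees it.
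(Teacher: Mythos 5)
Your proposal is correct and follows essentially the same route as the paper: the paper also dismisses $(i)\Rightarrow(ii)\Rightarrow(iii)$ as immediate and concentrates on $(iii)\Rightarrow(i)$, where it likewise invokes Condition (Free) to obtain $\nu:W'_{\mathcal S}\to A'$, applies $(iii)$ to get a tossing over $W_{\mathcal S}$ with some skeleton $\mathcal T$, pushes it forward along $\tau_{\mathcal S}\nu$ into $A\mu$ using $W_{\mathcal S}\tau_{\mathcal S}\nu\subseteq A\mu$ (so that the intermediate entries land in $A\mu$, exactly the point you flag as the crux), and then reflects it back to $A$ because $\mu$ is an order-embedding. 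The only cosmetic difference is that the paper phrases this via the formulas $\delta_{\mathcal T}$ and Remark~\ref{rem:formulas} rather than via the functor $-\otimes I_B$ directly.
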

\begin{proof}
Clearly we need only show that $(iii)$ implies $(i)$.
Suppose that $(iii)$ holds, let $\mu:A \to A'$ lie in $\mathcal{C}$ and suppose that $$(a \mu ,b),(a' \mu ,b') \in A' \times B$$ 
 are connected via a double ordered tossing with double ordered skeleton $\mathcal{S}$,
 so that $\gamma_{\mathcal{S}}(b,b')$ holds. From considering the left hand side of the double ordered tossing, we have that $\delta_{\mathcal{S}}(a \mu, a' \mu)$ is true in $A'$. By assumption there is an embedding $ \tau_{\mathcal{S}}:W_{\mathcal{S}} \to W'_{\mathcal{S}}$ in $\mathcal{C}$  and $ u_{\mathcal{S}}, u'_{\mathcal{S}} \in W_{\mathcal{S}}$ such that $ \delta_{\mathcal{S}}(u_{\mathcal{S}} \tau_{\mathcal{S}},u'_{\mathcal{S}} \tau_{\mathcal{S}})$ is true in $W'_{\mathcal{S}}$, and a morphism $\nu:W'_{\mathcal{S}} \to A'$ such that $u_{\mathcal{S}} \,\tau_{\mathcal{S}} \nu= a \mu , \, u'_{\mathcal{S}} \,\tau_{\mathcal{S}} \nu= a' \mu $ and $W_{\mathcal{S}} \tau_{\mathcal{S}} \nu \subseteq A \mu $. Since $\delta_{\mathcal{S}}(u_{\mathcal{S}} \tau_{\mathcal{S}},u'_{\mathcal{S}} \tau_{\mathcal{S}})$ is true in $W'_{\mathcal{S}}$, there is a double ordered tossing from $(u_{\mathcal{S}} \tau_{\mathcal{S}},b)$ to $(u'_{\mathcal{S}} \tau_{\mathcal{S}},b')$ over $W'_{\mathcal{S}}$ and $B$, with double ordered skeleton $\mathcal{S}$. From $(iii)$, it follows that $(u_{\mathcal{S}},b)$ and $(u'_{\mathcal{S}},b')$ are connected via a double ordered tossing over $W_{\mathcal{S}}$ and $B$ with double ordered skeleton $\mathcal{T}$ say. It follows that $\delta_{\mathcal{T}}(u_{\mathcal{S}},u'_{\mathcal{S}})$ is true in $W_{\mathcal{S}}$ and so $\delta_{\mathcal{T}}(u_{\mathcal{S}} \tau_{\mathcal{S}} \nu,u'_{\mathcal{S}}\tau_{\mathcal{S}} \nu )$, that is, $\delta_{\mathcal{T}}(a \mu, a' \mu)$ is true in $A \mu$. Since $\mu $ is an ordered  embedding we deduce that $\delta_{\mathcal{T}}(a,a')$ is true in $A$ and consequently, $(a,b)$ and $(a',b')$ are connected via a double ordered tossing with double ordered skeleton $\mathcal{T}$ over $A$ and $B$. Hence $B$ is $\mathcal{C}$-flat as required.
\end{proof} 

Our next aim is to show that the class of all
embeddings of right $S$-posets has Condition (Free). To this
end we present a `Finitely Presented Flatness Lemma' for $S$-posets.
First, a {\em po-congruence} on a left $S$-poset $B$  is an 
equivalence relation $\rho$ which is compatible with the action on $S$, 
such that in addition $B/\rho$ may be a partially ordered in a 
way that the natural map $B\to B/\rho$ is $S$-pomorphism. For further details
concerning congruences on ordered algebras, we refer the reader to \cite{cl} and for the specific case of $S$-posets, to \cite{yunshi}. Given
a subset $R$ of $B\times B$, it is possible to construct 
a po-congruence $\equiv_R$ on $B$ such that $[a]\preceq_R [b]$
for every
 $(a,b)\in R$, 
where $\preceq_R$ is the ordering in $B/\equiv_R$, and is such that
if $\alpha:B\rightarrow C$ is an $S$-pomorphism from $B$ to
any left $S$-poset $C$ with $a\alpha\leq b\alpha$ for all $(a,b)\in R$,
then there exists a pomorphism $\beta:B/\equiv_R\rightarrow C$ such
that $[b]\beta=b\alpha$, for all $b\in B$.

 For a double ordered skeleton $\mathcal{S}=(\mathcal{S}_1,\mathcal{S}_2)$
 where
 \[\mathcal{S}_1=(s_1,t_1,\cdots,s_m,t_m)
 \mbox{ and }\mathcal{S}_2=(u_1,v_1,\cdots,u_n,v_n),\]
  we let  $F^{m+n}$ be the free right $S$-poset
 $$ xS \,\dot{\cup} \,x_2 S \,\dot{\cup} \,\hdots  x_m S \, \dot{\cup} \, y_2 S \,\dot{\cup}\, y_3 S  \hdots \, y_n S \, \dot{\cup} \, x' S$$
and let $R_{\mathcal{S}}$ be the set
 \[ \big \{(xs_1,x_2t_1),(x_2 s_2, x_3 t_2), \hdots, (x_m t_{m-1}),(x_m s_m , x' t_m),\]
 \[(x'u_1,y_2 v_1),(y_2 u_2,y_3 v_2), \hdots, (y_n u_n, x v_n) \big \}.\]

We aim here to define a least ordered congurence relation which contains a relation $H \subseteq A \times A.$

Let us abbreviate by 
$ \equiv_{\mathcal S}$ the $S$-poset congruence  
 $\equiv_{R_{\mathcal S}} $ induced by $R_{\mathcal{S}}$.
We abbreviate  the order $\preceq_{R_{\mathcal{S}}}$ on $F^{m+n}/\equiv_{\mathcal{S}}$
 by $\preceq_{\mathcal{S}}$.

 If $B$ is a left $S$-poset and 
$b, \, b_1, \, , \cdots , b_m ,\, d_1, \, d_2 , \, , \cdots , d_n,
 \, b' \in B$ are such that 
 \[\theta_{\mathcal{S}_1}(b,b_1,\hdots, b_m,b')\mbox{
  and }
  \theta_{\mathcal{S}_2}(b',d_1,\hdots ,d_n,b)\]
  hold,
 then the double ordered tossing

$$
\begin{array}{rclcrcl}
                      &    &                    & b& \leq & s_1 b_1\\\
                [x]s_1 & \leq & [x_2] t_1        & t_1 b_1 & \leq & s_2 b_2\\\
                                
                [x_2] s_2 & \leq & [x_3] t_2     &  t_2 b_2 & \leq & s_3 b_3 \\
                
              &  \vdots &                         && \vdots & \\\
                  
               [ x_m ]s_m & \leq & [x']t_m          & t_m b_m & \leq & b' \\
              
              & \vspace{2mm} &                   & \vspace{2mm} & \\
               &    &                    & b'& \leq & u_1 d_1\\\
              [x']u_1  & \leq & [y_2]v_1          & v_1 d_1 & \leq & u_2 d_2 \\\
              [y_2]u_2 & \leq & [y_3]v_2         & v_2 d_2 & \leq & u_3 d_3    \\
              &  \vdots &                        & &\vdots & \\\
             [y_n] u_n & \leq & [x] v_n          & v_n d_n & \leq & b
\end{array}
$$
  over $F^{m+n} / \equiv_{\mathcal S}$ and $B$ is called a
 {\it double ordered standard tossing}; clearly it has double ordered skeleton $\mathcal {S}.$

It is clear that (by considering a trivial left $S$-poset $B$), 
the set of all double ordered skeletons $\mathbb{DOS}$ is the set of all finite even length
sequences of elements of $S$, of length at least 4.

\begin{lem}\label{lem:fpflat} The following conditions are  equivalent for a left $S$-poset $B$:

(i) $B$ is flat;

(ii) $-\otimes B$ maps  embeddings of $[x]S \cup [x']S$ into $F^{m+n}/ \equiv_{\mathcal S}$ in the category {\bf Pos-}${\mathbf S}$ to monomorphisms in the category of ${\bf POS}$, for every double ordered skeleton $\mathcal S$;

(iii)  if $\big([x],b \big)$ and $\big([x'],b' \big)$ are connected by a double ordered standard tossing over $F^{m+n}/ \equiv_{\mathcal S}$ and $B$ (with double ordered skeleton  $\mathcal S$), then they are connected by a double ordered tossing over $[x]S \cup [x']S$ and $B$.
\end{lem}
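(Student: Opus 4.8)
The plan is to deduce the three equivalences directly from Lemma~\ref{lem:coflat} by showing that the class $\mathcal{C}$ of \emph{all} embeddings of right $S$-posets satisfies Condition (Free), using the explicit data assembled above: for each double ordered skeleton $\mathcal{S}=(\mathcal{S}_1,\mathcal{S}_2)$ of length $m+n$ I would take $W_{\mathcal{S}}=[x]S\cup[x']S$, regarded as a right $S$-subposet of $W'_{\mathcal{S}}=F^{m+n}/\equiv_{\mathcal{S}}$, with $\tau_{\mathcal{S}}$ the inclusion and $u_{\mathcal{S}}=[x]$, $u'_{\mathcal{S}}=[x']$. Once Condition (Free) is in place, statements (i)--(iii) are just the specialisations of the corresponding parts of Lemma~\ref{lem:coflat}: since $\mathcal{C}$ consists of all embeddings, $\mathcal{C}$-flat coincides with flat, giving (i); (ii) is the direct specialisation of Lemma~\ref{lem:coflat}(ii) along $\tau_{\mathcal{S}}$; and for (iii) I would observe that, because $\delta_{\mathcal{S}}([x],[x'])$ holds in $W'_{\mathcal{S}}$ by construction, Remark~\ref{rem:formulas}(i) tells us that a pair $([x],b),([x'],b')$ is connected over $W'_{\mathcal{S}}$ and $B$ by a double ordered tossing with skeleton $\mathcal{S}$ exactly when $\gamma_{\mathcal{S}}(b,b')$ holds, which is precisely when the double ordered standard tossing exists. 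This identifies the hypothesis of Lemma~\ref{lem:coflat}(iii) with that of the present (iii).

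To verify Condition (Free) I would first check that $\tau_{\mathcal{S}}$ is an embedding in $\mathcal{C}$: the set $[x]S\cup[x']S$ is closed under the action and carries the order inherited from $W'_{\mathcal{S}}$, so the inclusion is an order-embedding of a right $S$-subposet. Next I would confirm that $\delta_{\mathcal{S}}([x],[x'])$ is true in $W'_{\mathcal{S}}$; this is immediate from the defining relations of $R_{\mathcal{S}}$, which give $[x]s_1\preceq_{\mathcal{S}}[x_2]t_1$, $[x_2]s_2\preceq_{\mathcal{S}}[x_3]t_2,\dots$ together with the dual chain, so that the classes $[x_2],\dots,[x_m]$ and $[y_2],\dots,[y_n]$ witness $\epsilon_{\mathcal{S}_1}$ and $\epsilon_{\mathcal{S}_2}$ respectively.

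The substantive step is the universal property. Given an embedding $\mu:A\to A'$ in $\mathcal{C}$ and $a,a'\in A$ with $\delta_{\mathcal{S}}(a\mu,a'\mu)$ true in $A'$, I would choose witnesses $a_2,\dots,a_m,c_2,\dots,c_n\in A'$ for the existential quantifiers of $\delta_{\mathcal{S}}$. Since $F^{m+n}$ is free on its generators, the assignment $x\mapsto a\mu$, $x'\mapsto a'\mu$, $x_i\mapsto a_i$, $y_j\mapsto c_j$ extends to an $S$-pomorphism $F^{m+n}\to A'$; here I use Theorem~\ref{thm:freeproj}(i), namely that distinct generators are order-incomparable, so monotonicity need only be checked inside each cyclic summand, where it holds automatically. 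The images of the pairs in $R_{\mathcal{S}}$ are exactly the inequalities $(a\mu)s_1\leq a_2t_1,\dots$ provided by $\epsilon_{\mathcal{S}_1}(a\mu,a_2,\dots,a'\mu)$ and $\epsilon_{\mathcal{S}_2}(a'\mu,c_2,\dots,a\mu)$, so by the universal property of $\equiv_{\mathcal{S}}$ this map factors through a pomorphism $\nu:W'_{\mathcal{S}}\to A'$. Then $[x]\nu=a\mu$, $[x']\nu=a'\mu$, and $W_{\mathcal{S}}\nu=(a\mu)S\cup(a'\mu)S=(aS\cup a'S)\mu\subseteq A\mu$, which is exactly what Condition (Free) requires.

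I expect the main difficulty to be bookkeeping rather than conceptual: aligning the indexing of the witnesses $a_2,\dots,a_m,c_2,\dots,c_n$ with the generators of $F^{m+n}$ and with the listed pairs of $R_{\mathcal{S}}$, and being careful that the factorisation through $\equiv_{\mathcal{S}}$ needs only the $R_{\mathcal{S}}$-inequalities to be \emph{preserved}, not reflected. The one genuinely load-bearing point is the combined use of the universal property of the free $S$-poset and of the order-congruence $\equiv_{\mathcal{S}}$ to produce $\nu$; after that, the statement is a direct translation through Lemma~\ref{lem:coflat}.
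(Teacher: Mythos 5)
Your proposal is correct, and the key technical step is the same as the paper's, but the logical organisation is genuinely different. The paper proves Lemma~\ref{lem:fpflat} directly (and only the implication $(iii)\Rightarrow(i)$): given $a\otimes b=a'\otimes b'$ in $A\otimes B$ with skeleton $\mathcal{S}$, it passes to the standard tossing over $F^{m+n}/\equiv_{\mathcal S}$, applies $(iii)$ to get a tossing with skeleton $\mathcal{U}$ over $[x]S\cup[x']S$, and transports $\delta_{\mathcal U}([x],[x'])$ back to $\delta_{\mathcal U}(a,a')$ in $aS\cup a'S$ via a pomorphism $\overline{\phi}:F^{m+n}/\equiv_{\mathcal S}\to A$ built by exactly the universal-property argument you give (free right $S$-poset, then factorisation through the order-congruence, which only requires the $R_{\mathcal S}$-inequalities to be preserved). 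You instead verify Condition (Free) for the class of all embeddings first --- which is precisely the content of the paper's Lemma~\ref{lem:allfree}, proved immediately afterwards by citing the construction ``as in Lemma~\ref{lem:fpflat}'' --- and then read the present lemma off as the specialisation of Lemma~\ref{lem:coflat} with $W_{\mathcal S}=[x]S\cup[x']S$, $W'_{\mathcal S}=F^{m+n}/\equiv_{\mathcal S}$, $u_{\mathcal S}=[x]$, $u'_{\mathcal S}=[x']$. There is no circularity in this reordering, since your verification of Condition (Free) uses only Theorem~\ref{thm:freeproj}, Remark~\ref{rem:formulas} and the universal property of $\equiv_{R_{\mathcal S}}$, none of which depend on Lemma~\ref{lem:fpflat}. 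Your route buys the full three-way equivalence as a formal consequence, makes explicit the step (implicit in the paper's closing ``Thus $B$ is flat'') that a tossing over $aS\cup a'S$ transfers along any embedding $\mu:A\to A'$, and correctly identifies the standard-tossing hypothesis of $(iii)$ with the general-tossing hypothesis of Lemma~\ref{lem:coflat}$(iii)$ via the observation that $\delta_{\mathcal S}([x],[x'])$ always holds; the paper's direct argument avoids front-loading Condition (Free). Both are sound.
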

\begin{proof}
We will prove here only $(iii) \Rightarrow (i)$.
Suppose that $B$ satisfies condition $(iii)$, let $a , a'$ belongs to
any
 right $S$-poset $A$, let $b,b' \in B$, and suppose that
 $ a \otimes b=a' \otimes b'$ in $A \otimes B$ via a double ordered tossing
 with double ordered skeleton $\mathcal{S}=(\mathcal{S}_1,\mathcal{S}_2)$, 
 where $\mathcal{S}_1,\mathcal{S}_2$ have lengths $m$ and
 $n$, respectively. By Remark~\ref{rem:formulas}, $\delta_{\mathcal{S}}(a,a')$ is true in $A$ and
 $\gamma_{\mathcal{S}}(b,b')$ is true in $B$. Since $\delta_{\mathcal{S}}([x],[x'])$
 holds in $F^{m+n}/ \equiv_{\mathcal S}$, we have that 
 $\big([x],b \big)$ and $\big([x'],b' \big)$ are connected by a double ordered standard tossing over $F^{m+n}/ \equiv_{\mathcal S}$ and $B$. 
By the given hypothesis we have that 
$\big([x],b \big)$ and $\big([x'],b' \big)$  
connected via a double ordered tossing in $\big( [x]S \cup [x']S \big ) \otimes B$, say with
 double ordered skeleton $\mathcal{U}$.

Since $\delta_{\mathcal{S}}(a,a')$ is true in $A$, there
are elements $a_2,\hdots, a_m,c_2,\hdots ,c_n\in A$ such that
\[\epsilon_{\mathcal{S}_1}(a,a_2,\hdots ,a_m,a')\mbox{ and }
\epsilon_{\mathcal{S}_2}(a',c_2,\hdots ,c_n,a)\]
hold in $A$. Let $ \phi:F^{m+n} \to A $ be the $S$-pomorphism which is defined by 
$x\phi= a,\, x_i\phi = a_i $\, $( 2 \leq i \leq m)$,\, $x' \phi=a'$
 and $y_j\phi=c_j \,( 2 \leq j \leq n)$. 
Since $ u\phi \leq u'\phi  $ for all $ ( u, u') \in 
R_ {\mathcal S} $,  we have
that $ \overline{\phi} : F^{m+n} / 
\equiv_{\mathcal S}  \to A $ given by
$[z]\overline{\phi}=z\phi$ is a well defined $S$-pomorphism.
We have that $\delta_{\mathcal{U}}([x],[x'])$ holds in $[x]S\cup [x']S$, so that by
Remark~\ref{rem:formulas}, $\delta_{\mathcal{U}}(a,a')$ holds in $aS\cup a'S$. Since
also $\gamma_{\mathcal{U}}(b,b')$ holds in $B$, we have that
$(a,a')$ and $(b,b')$ are connected by a double ordered tossing over
$aS\cup a'S$ and $B$, so that $a\otimes b=a'\otimes b'$ in $aS\cup a'S\otimes B$.
Thus $B$ is flat, as required.
\end{proof}

With a similar argument, we prove the following.

\begin{lem}\label{lem:allfree} The class Pos-$S$ of all right $S$-posets has Condition (Free).
\end{lem}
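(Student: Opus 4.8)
The plan is to verify Condition (Free) directly using the finitely presented $S$-posets $F^{m+n}/\equiv_{\mathcal{S}}$ that were constructed just before Lemma~\ref{lem:fpflat}. For each double ordered skeleton $\mathcal{S}=(\mathcal{S}_1,\mathcal{S}_2)$ of length $m+n$, I would take $W'_{\mathcal{S}}$ to be $F^{m+n}/\equiv_{\mathcal{S}}$, take $W_{\mathcal{S}}$ to be the sub-$S$-poset $[x]S\cup[x']S$, and let $\tau_{\mathcal{S}}:W_{\mathcal{S}}\to W'_{\mathcal{S}}$ be the inclusion map (which is an embedding, hence lies in $\mathcal{C}$ when $\mathcal{C}$ is the class of all embeddings). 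The distinguished elements are $u_{\mathcal{S}}=[x]$ and $u'_{\mathcal{S}}=[x']$. The first requirement of Condition (Free), namely that $\delta_{\mathcal{S}}(u_{\mathcal{S}}\tau_{\mathcal{S}},u'_{\mathcal{S}}\tau_{\mathcal{S}})=\delta_{\mathcal{S}}([x],[x'])$ holds in $W'_{\mathcal{S}}$, is immediate: the defining relations in $R_{\mathcal{S}}$ were chosen precisely so that $\epsilon_{\mathcal{S}_1}([x],[x_2],\dots,[x_m],[x'])$ and $\epsilon_{\mathcal{S}_2}([x'],[y_2],\dots,[y_n],[x])$ hold in $F^{m+n}/\equiv_{\mathcal{S}}$, which is exactly what $\delta_{\mathcal{S}}([x],[x'])$ asserts.

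**Next I would** establish the universal (extension) half of Condition (Free). Given any embedding $\mu:A\to A'$ in $\mathcal{C}$ and any $a,a'\in A$ with $\delta_{\mathcal{S}}(a\mu,a'\mu)$ true in $A'$, the truth of $\delta_{\mathcal{S}}(a,a')$ actually transfers back to $A$ because $\mu$ is an embedding (so the inequalities in $\epsilon_{\mathcal{S}_1},\epsilon_{\mathcal{S}_2}$ reflect). This yields witnesses $a_2,\dots,a_m,c_2,\dots,c_n\in A$ satisfying $\epsilon_{\mathcal{S}_1}(a,a_2,\dots,a_m,a')$ and $\epsilon_{\mathcal{S}_2}(a',c_2,\dots,c_n,a)$. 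As in the proof of Lemma~\ref{lem:fpflat}, I define the $S$-pomorphism $\phi:F^{m+n}\to A$ by $x\phi=a$, $x_i\phi=a_i$, $x'\phi=a'$, $y_j\phi=c_j$, check that $u\phi\le u'\phi$ for every $(u,u')\in R_{\mathcal{S}}$ (this is precisely the content of the $\epsilon$-formulas), and conclude that $\phi$ factors through the quotient to give $\overline{\phi}:F^{m+n}/\equiv_{\mathcal{S}}\to A$. Setting $\nu=\overline{\phi}\,\mu:W'_{\mathcal{S}}\to A'$ furnishes the required morphism: one reads off $u_{\mathcal{S}}\tau_{\mathcal{S}}\nu=[x]\overline{\phi}\mu=a\mu$ and $u'_{\mathcal{S}}\tau_{\mathcal{S}}\nu=a'\mu$, while $W_{\mathcal{S}}\tau_{\mathcal{S}}\nu=([x]S\cup[x']S)\overline{\phi}\mu=(aS\cup a'S)\mu\subseteq A\mu$.

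**The main obstacle** I anticipate is purely bookkeeping rather than conceptual: I must be careful that $\phi$ is genuinely order-preserving and that it respects the relations $R_{\mathcal{S}}$ so that the factorisation through $\equiv_{\mathcal{S}}$ is legitimate. The construction of $F^{m+n}$ as a disjoint union of copies of $_SS$ means $\phi$ is determined freely on the generators and is automatically an $S$-pomorphism on each summand; the only nontrivial point is that distinct summands are incomparable in $F^{m+n}$, so no spurious order relations need to be preserved, and the compatibility with $R_{\mathcal{S}}$ follows verbatim from the $\epsilon$-formulas holding in $A$. Since this argument reuses essentially all of the machinery already deployed in Lemma~\ref{lem:fpflat}, I would phrase the proof as a short verification pointing back to that lemma, noting only the two clauses of Condition (Free) and indicating that the witness morphism $\nu$ is constructed exactly as $\overline{\phi}$ composed with $\mu$.
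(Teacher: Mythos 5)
Your choice of witnessing data is exactly the paper's: $W'_{\mathcal{S}}=F^{m+n}/\equiv_{\mathcal{S}}$, $W_{\mathcal{S}}=[x]S\cup[x']S$, $\tau_{\mathcal{S}}$ the inclusion, $u_{\mathcal{S}}=[x]$, $u'_{\mathcal{S}}=[x']$, and your verification of the first clause of Condition (Free) is correct. However, your construction of the morphism $\nu$ contains a genuine gap. You assert that the truth of $\delta_{\mathcal{S}}(a\mu,a'\mu)$ in $A'$ ``transfers back'' to give $\delta_{\mathcal{S}}(a,a')$ in $A$ because $\mu$ is an embedding, and you then produce witnesses $a_2,\dots,a_m,c_2,\dots,c_n\in A$. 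This is false in general: $\delta_{\mathcal{S}}$ is an \emph{existential} formula, and its witnesses in $A'$ are arbitrary elements of $A'$ that need not lie in the image $A\mu$. An embedding reflects inequalities between elements of $A\mu$, but it cannot pull existential witnesses back into $A$. Indeed, if your claim were correct, then by Remark~\ref{rem:formulas}$(i)$ any double ordered tossing over $A'$ and $B$ connecting $(a\mu,b)$ and $(a'\mu,b')$ could immediately be replaced by one with the same skeleton over $A$ and $B$, so every left $S$-poset would be $\mathcal{C}$-flat for every class $\mathcal{C}$ of embeddings, and the whole axiomatisability problem would be vacuous.

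The repair is small and is what the paper does: take the witnesses for $\delta_{\mathcal{S}}(a\mu,a'\mu)$ \emph{in $A'$}, define $\phi:F^{m+n}\to A'$ on the generators by sending $x\mapsto a\mu$, $x'\mapsto a'\mu$ and the remaining generators to those witnesses of $A'$, check $u\phi\le u'\phi$ for all $(u,u')\in R_{\mathcal{S}}$ exactly as in Lemma~\ref{lem:fpflat}, and let $\nu=\overline{\phi}:W'_{\mathcal{S}}\to A'$ be the induced $S$-pomorphism. The containment $W_{\mathcal{S}}\tau_{\mathcal{S}}\nu\subseteq A\mu$ then does \emph{not} require the whole of $W'_{\mathcal{S}}$ to land in $A\mu$; it follows solely from $[x]\nu=a\mu$ and $[x']\nu=a'\mu$, since $W_{\mathcal{S}}\tau_{\mathcal{S}}\nu=([x]S\cup[x']S)\nu=a\mu S\cup a'\mu S=(aS\cup a'S)\mu\subseteq A\mu$. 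Note that your version would have yielded the stronger (and unneeded) conclusion $W'_{\mathcal{S}}\nu\subseteq A\mu$, which is a sign that something was over-claimed.
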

\begin{proof} Let $\mathcal{S}$ be a double ordered skeleton of
length $m+n$,
let $W'_{\mathcal{S}}=F^{m+n}/\equiv_{\mathcal{S}}$,
$W_{\mathcal{S}}=[x]S\cup [x']S$ and let
$\tau_{\mathcal{S}}:W_{\mathcal{S}}\rightarrow W'_{\mathcal{S}}$ denote
inclusion. Then $[x],[x']\in W_{\mathcal{S}}$ and
$\delta_{\mathcal{S}}([x]\tau_{\mathcal{S}},[x']\tau_{\mathcal{S}})$
is true in $W'_{\mathcal{S}}$. 

Let $\mu:A\rightarrow A'$ be any right $S$-poset
embedding such that $\delta_{\mathcal{S}}(a\mu,a'\mu)$ holds in $A'$, for
some $a,a'\in A$. As in Lemma~\ref{lem:fpflat}, there is as a consequence
an $S$-pomorphism $\nu:W'_{\mathcal{S}}\rightarrow A'$ such that
$[x]\tau_{\mathcal{S}}\nu=a\mu$ and
$[x']\tau_{\mathcal{S}}\nu=a'\mu$. Clearly 
\[W_{\mathcal{S}}\tau_{\mathcal{S}}\nu=
([x]S\cup [x']S)\tau_{\mathcal{S}}\nu=[x]\tau_{\mathcal{S}}\nu S \cup [x']\tau_{\mathcal{S}}\nu S=
a\mu S\cup a'\mu S=
(aS\cup a'S)\mu\subseteq A\mu.\]
Thus, with $u_{\mathcal{S}}=[x]$ and $u'_{\mathcal{S}}=[x']$,
we see that Condition (Free) holds.

\end{proof}

Let $\mathcal{C}$ be a class of ordered embeddings of right $S$-posets. Let $\overline{\mathcal{C}}$ be the set of products of morphisms in $\mathcal{C}$
(with  the obvious definition and pointwise ordering). 

\begin{lem}\label{lem:prodC} Let $\mathcal{C}$ be a class of embeddings of right $S$-posets, satisfying Condition (Free).  If a left $S$-poset $B$ is $\mathcal{C}$-flat, then it is $\overline{\mathcal{C}}$-flat.
\end{lem}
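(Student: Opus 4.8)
The plan is to verify $\mathcal{C}$-flatness coordinatewise and then reassemble, exploiting the coordinate-independence of the data supplied by Condition (Free). Let $\mu=\prod_{i\in I}\mu_i\colon\prod_{i\in I}A_i\to\prod_{i\in I}A'_i$ be a typical member of $\overline{\mathcal{C}}$, where each $\mu_i\colon A_i\to A'_i$ lies in $\mathcal{C}$ and where the action and order on the products are pointwise. I must show $\mu\otimes I_B$ is one-one. So I would take $a=(a_i),a'=(a'_i)\in\prod_{i\in I}A_i$ and $b,b'\in B$ with $(a\mu)\otimes b=(a'\mu)\otimes b'$ in $\big(\prod_{i\in I}A'_i\big)\otimes B$. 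By Lemma~\ref{lem:ptossing} this equality is witnessed by a double ordered tossing of some double ordered skeleton $\mathcal{S}$, so by Remark~\ref{rem:formulas}$(i)$ the formula $\delta_{\mathcal{S}}(a\mu,a'\mu)$ holds in $\prod_{i\in I}A'_i$ and $\gamma_{\mathcal{S}}(b,b')$ holds in $B$.

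The first key step is the observation that $\delta_{\mathcal{S}}$ is an existentially quantified conjunction of atomic ($\leq$- and $=$-) formulas, and that the order and the $S$-action on a product are pointwise. Consequently $\delta_{\mathcal{S}}$ is both preserved and reflected by products: $\delta_{\mathcal{S}}(a\mu,a'\mu)$ holds in $\prod_{i\in I}A'_i$ if and only if $\delta_{\mathcal{S}}(a_i\mu_i,a'_i\mu_i)$ holds in $A'_i$ for every $i$. Reflection follows by applying the projection pomorphisms; preservation follows since the finitely many existential witnesses may be chosen coordinatewise and then combined into a single tuple. Thus $\delta_{\mathcal{S}}(a_i\mu_i,a'_i\mu_i)$ holds in each $A'_i$.

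Now I fix, once and for all, the coordinate-independent data of Condition (Free) for $\mathcal{S}$: the embedding $\tau_{\mathcal{S}}\colon W_{\mathcal{S}}\to W'_{\mathcal{S}}$ in $\mathcal{C}$ and the elements $u_{\mathcal{S}},u'_{\mathcal{S}}\in W_{\mathcal{S}}$. Since $\delta_{\mathcal{S}}(u_{\mathcal{S}}\tau_{\mathcal{S}},u'_{\mathcal{S}}\tau_{\mathcal{S}})$ is true in $W'_{\mathcal{S}}$ and $\gamma_{\mathcal{S}}(b,b')$ holds in $B$, the pairs $(u_{\mathcal{S}}\tau_{\mathcal{S}},b)$ and $(u'_{\mathcal{S}}\tau_{\mathcal{S}},b')$ are connected by a double ordered tossing over $W'_{\mathcal{S}}$ and $B$ with skeleton $\mathcal{S}$. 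As $B$ is $\mathcal{C}$-flat, Lemma~\ref{lem:coflat}$(iii)$ applies and yields a double ordered tossing connecting $(u_{\mathcal{S}},b)$ and $(u'_{\mathcal{S}},b')$ over $W_{\mathcal{S}}$ and $B$, with some skeleton $\mathcal{T}$. The crucial point is that $W_{\mathcal{S}},\tau_{\mathcal{S}},u_{\mathcal{S}},u'_{\mathcal{S}}$, and hence $\mathcal{T}$, depend only on $\mathcal{S}$ and $B$, not on the coordinate $i$. In particular $\delta_{\mathcal{T}}(u_{\mathcal{S}},u'_{\mathcal{S}})$ is true in $W_{\mathcal{S}}$ and $\gamma_{\mathcal{T}}(b,b')$ is true in $B$, with $\mathcal{T}$ the same for all coordinates.

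For each $i$, Condition (Free) also supplies a morphism $\nu_i\colon W'_{\mathcal{S}}\to A'_i$ with $u_{\mathcal{S}}\tau_{\mathcal{S}}\nu_i=a_i\mu_i$, $u'_{\mathcal{S}}\tau_{\mathcal{S}}\nu_i=a'_i\mu_i$ and $W_{\mathcal{S}}\tau_{\mathcal{S}}\nu_i\subseteq A_i\mu_i$. Pushing $\delta_{\mathcal{T}}(u_{\mathcal{S}},u'_{\mathcal{S}})$ forward along $\tau_{\mathcal{S}}\nu_i$ via Remark~\ref{rem:formulas}$(ii)$ shows that $\delta_{\mathcal{T}}(a_i\mu_i,a'_i\mu_i)$ holds in $A'_i$ with all its witnesses lying in $W_{\mathcal{S}}\tau_{\mathcal{S}}\nu_i\subseteq A_i\mu_i$; since $\mu_i$ is an embedding, $\delta_{\mathcal{T}}(a_i,a'_i)$ holds in $A_i$. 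Because $\mathcal{T}$ is independent of $i$, the preservation direction of the first step yields $\delta_{\mathcal{T}}(a,a')$ in $\prod_{i\in I}A_i$; together with $\gamma_{\mathcal{T}}(b,b')$ this produces a double ordered tossing connecting $(a,b)$ and $(a',b')$ over $\prod_{i\in I}A_i$ and $B$, whence $a\otimes b=a'\otimes b'$. Therefore $\mu\otimes I_B$ is one-one and $B$ is $\overline{\mathcal{C}}$-flat. I expect the main obstacle to be exactly the uniformity of the replacement skeleton across coordinates; it is overcome by routing the argument through the coordinate-free objects $W_{\mathcal{S}},\tau_{\mathcal{S}},u_{\mathcal{S}},u'_{\mathcal{S}}$ of Condition (Free) rather than applying $\mathcal{C}$-flatness separately inside each $A_i$, where the resulting skeletons could vary with $i$.
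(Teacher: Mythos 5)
Your proof is correct and follows essentially the same route as the paper's: reflect $\delta_{\mathcal{S}}$ to the coordinates, use the coordinate-free objects $W_{\mathcal{S}},\tau_{\mathcal{S}},u_{\mathcal{S}},u'_{\mathcal{S}}$ of Condition (Free) together with $\mathcal{C}$-flatness applied to $\tau_{\mathcal{S}}$ to obtain a single replacement skeleton, push it into each $A'_i$ via the morphisms $\nu_i$, pull back along the embeddings $\mu_i$, and reassemble over the product. The uniformity of the replacement skeleton across coordinates, which you correctly identify as the crux, is handled exactly as in the paper.
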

\begin{proof}
Let $I$ be an indexing set and let $\gamma_i :A_i \to A'_i \in \mathcal{C}$ for all $i \in I$. Let $A=\prod_{i \in I} A_i, \, \,A'=\prod_{i \in I}A'_{i}$ and let $\gamma:A\rightarrow A'$ be the canonical embedding, so that
$(a_i)\gamma=(a_i\gamma_i)$.

 Suppose $B$ is a $\mathcal{C}$-flat left $S$-poset. Let 
${\underline{a}}=(a_i),
{\underline{a}'}=(a_i') \in A$ and $b,b'\in B$ be
such that $\underline{a} \gamma \otimes b = \underline{a}'\gamma \otimes b$ in $A' \otimes B$. Then for some double
ordered skeleton $\mathcal{S}$,
\[
A'\models \delta_{\mathcal{S}}(\underline{a}\gamma,\underline{a'}\gamma)
\mbox{ and }B\models \gamma_{\mathcal{S}}(b,b').\]
It follows that for each $i\in I$,
\[A_i\models \delta_{\mathcal{S}}(a_i\gamma_i,a_i'\gamma_i).\]

 By assumption that $\mathcal{C}$ has Condition (Free), 
 there exist $\tau_{{\mathcal{S}}}:W_{\mathcal{S}} \to W'_{\mathcal{S}} \in \mathcal{C}$ and $
 u_{\mathcal{S}}, u'_{\mathcal{S}}\in W_{\mathcal{S}}$ such that $\delta_{\mathcal{S}}(u_{\mathcal{S}} \tau_{\mathcal{S}}, u'_{\mathcal{S}} \tau_{\mathcal{S}})$ is true in $W'_{\mathcal{S}}$. Further, for each $i\in I$, as 
 $\delta_{\mathcal{S}}(a_i\gamma_i,a_i'\gamma_i)$ is true in $A'_i$,
 there exists an $S$-pomorphism $\nu_i:W_{\mathcal{S}}'\rightarrow A_i'$
 such that
  $ u_{\mathcal{S}}\, \tau_{\mathcal{S}} \nu_{i} = a_i \gamma_{i},\, u_{\mathcal{S}}' 
  \tau_{\mathcal{S}} \nu_{i}= a'_i \gamma_{i}$ 
  and 
$ W_{\mathcal{S}} \tau_{\mathcal{S}}\nu_{i} \subseteq A_i \gamma_{i}$.

We have $\delta_{\mathcal{S}}(u _{\mathcal{S}}\tau_{\mathcal{S}},u'_{\mathcal{S}} \tau_{\mathcal{S}})$ is true in $W'_{\mathcal{S}}$ and $\gamma_{\mathcal{S}}(b,b')$ is true in $B$, giving that $u_{\mathcal{S}} \tau_{\mathcal{S}} \otimes b = u'_{\mathcal{S}} \tau_{\mathcal{S}}\otimes b' $ in $W'_{\mathcal{S}} \otimes B$.
As $B$ is a $\mathcal{C}$-flat left $S$-poset
and $\tau_{\mathcal{S}}:W_{\mathcal{S}} \to W'_{\mathcal{S}} \in \mathcal{C}$, we
have that $u_{\mathcal{S}}  \otimes b = u'_{\mathcal{S}} \otimes b' $
in $W_{\mathcal{S}}\otimes B$, say via a double ordered
tossing with double ordered skeleton $\mathcal{U}$. It follows that
\[W_{\mathcal{S}}\models \delta_{\mathcal{U}}(u_{\mathcal{S}},u'_{\mathcal{S}})
\mbox{ and }B\models \gamma_{\mathcal{U}}(b,b').\]
By $(ii)$ of Remark~\ref{rem:formulas}, we have that
\[A_i\gamma_i\models \delta_{\mathcal{U}}(u_{\mathcal{S}}\tau_{\mathcal{S}}\nu_i,u'_{\mathcal{S}}
\tau_{\mathcal{S}}\nu_i),\]
that is,
\[A_i\gamma_i\models \delta_{\mathcal{U}}(a_i\gamma_i,a_i'\gamma_i).\]

Writing $\mathcal{U}=(\mathcal{U}_1,\mathcal{U}_2)$ where
$\mathcal{U}_1$ has length $h$
and  $\mathcal{U}_2$ has length $k$,
we have that there are elements $w_{i,2},\hdots ,w_{i,h},
z_{i,2},\hdots ,z_{i,k}\in A_i$ such that
\[\epsilon_{\mathcal{S}_1}(a_i\gamma_i,w_{i,2}\gamma_i,\hdots ,w_{i,h}\gamma_i,a_i'\gamma_i)
\mbox{ and }
\epsilon_{\mathcal{S}_2}(a'_i\gamma_i,z_{i,2}\gamma_i,\hdots ,z_{i,k}\gamma_i,a_i\gamma_i)\]
are true.
But $\gamma_i$ is an {\em embedding}, so that 
\[\epsilon_{\mathcal{S}_1}(a_i,w_{i,2},\hdots ,w_{i,h},a_i')
\mbox{ and }
\epsilon_{\mathcal{S}_2}(a'_i,z_{i,2},\hdots ,z_{i,k},a_i)\]
hold in $A_i$. Hence $\delta_{\mathcal{U}}(a_i,a_i')$ is true in each $A_i$
and so $\delta_{\mathcal{U}}(\underline{a},\underline{a}')$ holds in $A$.
Together with $\gamma_{\mathcal{U}}(b,b')$ being true in $B$, we deduce
that $\underline{a}\otimes b=\underline{a}'\otimes b'$ in $A\otimes B$,
as required.
\end{proof}

We now come to our first main result. The technique used is inspired by that of \cite{bulmanfleminggould}, but there are some differences: first we are working in a more general context and second, we are dealing with orderings.
 
 \begin{thm}\label{thm:FoutP}
 Let $\mathcal{C}$ be a class of ordered embeddings of right $S$-posets satisfying Condition (Free). Then the following conditions are equivalent for a pomonoid $S$:

 (1) the class $\mathcal{(CF)}$ is axiomatisable;
 
 (2) the class $\mathcal{(CF)}$ is closed under formation of ultraproducts;
 
 (3) for every double ordered skeleton $\mathcal{S} \in \mathbb{DOS}$ there exist finitely many double ordered replacement skeletons $\mathcal{S}_{1},\hdots,\mathcal{S}_{\alpha(\mathcal{S})}$ such that, for any embedding $\gamma: A \to A'$ in $\mathcal{C}$ and any $\mathcal{C}$-flat left $S$-poset $B$, if $(a \gamma ,b),\, (a' \gamma , b') \in A' \times B$  are connected by a double ordered tossing  $\mathcal{T}$ over $A'$ and $B$ with $ \mathcal{S}(\mathcal{T})= \mathcal{S}$, then $(a,b)$ and $(a',b')$ are connected by a double ordered tossing ${\mathcal{T}}^{'}$ over $A$ and $B$ such that $\mathcal{S}({\mathcal{T}}^{'})= \mathcal{S}_{k}$, for some $ k \in \{1,\cdots,\alpha(\mathcal{S})\}$;
 
 (4) for every double ordered skeleton $\mathcal{S} \in \mathbb{DOS}$ there exists finitely many double ordered replacement skeletons $\mathcal{S}_{1},\hdots,\mathcal{S}_{\beta(\mathcal{S})}$ such that, for any $\mathcal{C}$-flat left $S$-poset $B$, if $(u_{\mathcal{S}} \tau_{\mathcal{S}},b)$ and $(u'_{\mathcal{S}} \tau_{\mathcal{S}},b')$ are connected by the double ordered 
  tossing $\mathcal{T}$ over $W'_{\mathcal{S}}$ and $B$ (with $\mathcal{S}(\mathcal{T}) = \mathcal{S}$), then $(u_{\mathcal{S}}, b)$, and $(u'_{\mathcal{S}},b')$ are connected by a double ordered tossing ${\mathcal{T}}^{'}$ over $W_{\mathcal{S}}$ and $B$ such that $\mathcal{S}({\mathcal{T}}^{'})= \mathcal{S}_{k}$, for some $ k \in \{1,\cdots,\beta(\mathcal{S})\}.$
\end{thm}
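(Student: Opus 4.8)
The plan is to establish the cycle $(1)\Rightarrow(2)\Rightarrow(4)\Rightarrow(3)\Rightarrow(1)$. The implication $(1)\Rightarrow(2)$ is immediate from \L os's Theorem (Theorem~\ref{thm:los}). The implications $(4)\Rightarrow(3)$ and $(3)\Rightarrow(1)$ are soft consequences of Condition (Free), Lemma~\ref{lem:coflat} and Remark~\ref{rem:formulas}, whereas the substance of the theorem lies in the ultraproduct argument for $(2)\Rightarrow(4)$; I single out the version $(4)$, phrased in terms of the fixed embeddings $\tau_{\mathcal{S}}\in\mathcal{C}$, precisely because it lets me apply flatness to one fixed embedding per skeleton rather than to an ultraproduct of embeddings (which need not lie in $\mathcal{C}$).

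For $(4)\Rightarrow(3)$ I would argue as in Lemmas~\ref{lem:coflat} and~\ref{lem:prodC}. Given an embedding $\gamma:A\to A'$ in $\mathcal{C}$ and a $\mathcal{C}$-flat $B$ with $(a\gamma,b),(a'\gamma,b')$ joined by a tossing of skeleton $\mathcal{S}$, Remark~\ref{rem:formulas}$(i)$ yields $\delta_{\mathcal{S}}(a\gamma,a'\gamma)$ in $A'$ and $\gamma_{\mathcal{S}}(b,b')$ in $B$. Condition (Free) produces $\nu:W'_{\mathcal{S}}\to A'$ with $u_{\mathcal{S}}\tau_{\mathcal{S}}\nu=a\gamma$, $u'_{\mathcal{S}}\tau_{\mathcal{S}}\nu=a'\gamma$ and $W_{\mathcal{S}}\tau_{\mathcal{S}}\nu\subseteq A\gamma$; since $\delta_{\mathcal{S}}(u_{\mathcal{S}}\tau_{\mathcal{S}},u'_{\mathcal{S}}\tau_{\mathcal{S}})$ holds in $W'_{\mathcal{S}}$, the pairs $(u_{\mathcal{S}}\tau_{\mathcal{S}},b),(u'_{\mathcal{S}}\tau_{\mathcal{S}},b')$ are joined by a tossing of skeleton $\mathcal{S}$ over $W'_{\mathcal{S}}$ and $B$. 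Applying the finite family $\mathcal{S}_1,\dots,\mathcal{S}_{\beta(\mathcal{S})}$ supplied by $(4)$ gives a replacement tossing over $W_{\mathcal{S}}$ of some skeleton $\mathcal{S}_k$, so $\delta_{\mathcal{S}_k}(u_{\mathcal{S}},u'_{\mathcal{S}})$ holds in $W_{\mathcal{S}}$ and $\gamma_{\mathcal{S}_k}(b,b')$ in $B$; applying $\tau_{\mathcal{S}}\nu$ and using that $\gamma$ is an embedding transports $\delta_{\mathcal{S}_k}$ back to $A$, whence $(a,b),(a',b')$ are joined by a tossing of skeleton $\mathcal{S}_k$ over $A$ and $B$. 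Thus the same finite family witnesses $(3)$ (and $(3)\Rightarrow(4)$ is merely the special case $\gamma=\tau_{\mathcal{S}}$). For $(3)\Rightarrow(1)$ I would write explicit axioms: for each $\mathcal{S}\in\mathbb{DOS}$, with replacement skeletons $\mathcal{S}_1,\dots,\mathcal{S}_{\alpha(\mathcal{S})}$ from $(3)$, set $K_{\mathcal{S}}=\{k:\delta_{\mathcal{S}_k}(u_{\mathcal{S}},u'_{\mathcal{S}})\text{ holds in }W_{\mathcal{S}}\}$, a well-defined finite index set since $u_{\mathcal{S}},u'_{\mathcal{S}},W_{\mathcal{S}}$ are fixed, and define
\[\sigma_{\mathcal{S}}:=(\forall x)(\forall y)\Big(\gamma_{\mathcal{S}}(x,y)\rightarrow\bigvee_{k\in K_{\mathcal{S}}}\gamma_{\mathcal{S}_k}(x,y)\Big).\]
I claim $\Sigma=\{\sigma_{\mathcal{S}}:\mathcal{S}\in\mathbb{DOS}\}$ axiomatises $\mathcal{CF}$: a $\mathcal{C}$-flat $B$ satisfies each $\sigma_{\mathcal{S}}$ by $(3)$ applied to $\tau_{\mathcal{S}}$ together with Remark~\ref{rem:formulas}$(i)$, while if $B\models\Sigma$ then the criterion of Lemma~\ref{lem:coflat}$(iii)$ holds, since a tossing of skeleton $\mathcal{S}$ over $W'_{\mathcal{S}}$ forces $\gamma_{\mathcal{S}}(b,b')$, hence $\gamma_{\mathcal{S}_k}(b,b')$ for some $k\in K_{\mathcal{S}}$, and $\delta_{\mathcal{S}_k}(u_{\mathcal{S}},u'_{\mathcal{S}})$ then supplies the replacement tossing over $W_{\mathcal{S}}$ and $B$.

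The hard part is $(2)\Rightarrow(4)$, which I would prove by contradiction using ultraproducts. Suppose $(4)$ fails for some skeleton $\mathcal{S}$; then for every finite $J\subseteq\mathbb{DOS}$ there is a $\mathcal{C}$-flat $B_J$ and $b_J,b'_J\in B_J$ with $\gamma_{\mathcal{S}}(b_J,b'_J)$ true in $B_J$, yet such that for no $\mathcal{T}\in J$ are $(u_{\mathcal{S}},b_J),(u'_{\mathcal{S}},b'_J)$ joined by a tossing of skeleton $\mathcal{T}$ over $W_{\mathcal{S}}$ and $B_J$. I index by the set $I$ of finite subsets of $\mathbb{DOS}$ and fix an ultrafilter $\mathcal{U}$ on $I$ containing every set $\widehat{\mathcal{T}}=\{J\in I:\mathcal{T}\in J\}$, these having the finite intersection property. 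Put $B^{\ast}=\prod_{J}B_J/\mathcal{U}$ with $\beta=[(b_J)]$ and $\beta'=[(b'_J)]$. By $(2)$, $B^{\ast}$ is $\mathcal{C}$-flat, and by \L os's Theorem $\gamma_{\mathcal{S}}(\beta,\beta')$ holds in $B^{\ast}$; as $\delta_{\mathcal{S}}(u_{\mathcal{S}}\tau_{\mathcal{S}},u'_{\mathcal{S}}\tau_{\mathcal{S}})$ holds in $W'_{\mathcal{S}}$, Lemma~\ref{lem:coflat}$(iii)$ applied to $B^{\ast}$ yields a replacement tossing over $W_{\mathcal{S}}$ and $B^{\ast}$ of some single skeleton $\mathcal{T}^{\ast}\in\mathbb{DOS}$, so $\gamma_{\mathcal{T}^{\ast}}(\beta,\beta')$ holds in $B^{\ast}$. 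By \L os's Theorem again, $X=\{J:\gamma_{\mathcal{T}^{\ast}}(b_J,b'_J)\text{ holds in }B_J\}\in\mathcal{U}$; since $\widehat{\mathcal{T}^{\ast}}\in\mathcal{U}$ as well, I may choose $J_0\in X\cap\widehat{\mathcal{T}^{\ast}}$. Then $\mathcal{T}^{\ast}\in J_0$ and, as $\delta_{\mathcal{T}^{\ast}}(u_{\mathcal{S}},u'_{\mathcal{S}})$ holds in $W_{\mathcal{S}}$, the pairs $(u_{\mathcal{S}},b_{J_0}),(u'_{\mathcal{S}},b'_{J_0})$ are joined by a tossing of skeleton $\mathcal{T}^{\ast}\in J_0$ over $W_{\mathcal{S}}$ and $B_{J_0}$, contradicting the choice of $B_{J_0}$. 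The main obstacle, and the only point needing real care, is exactly this simultaneous deployment of \L os's Theorem, namely using $(2)$ to keep $B^{\ast}$ $\mathcal{C}$-flat while transferring the formulas $\gamma_{\mathcal{S}}$ and $\gamma_{\mathcal{T}^{\ast}}$ between $B^{\ast}$ and its factors, combined with the bookkeeping via the $\widehat{\mathcal{T}}$ that forces the single winning skeleton $\mathcal{T}^{\ast}$ produced by flatness to lie inside some factor's forbidden family.
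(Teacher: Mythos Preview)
Your proof is correct, and in fact slightly cleaner than the paper's in one respect. The paper proves the cycle $(1)\Rightarrow(2)\Rightarrow(3)\Rightarrow(4)\Rightarrow(1)$, whereas you prove $(1)\Rightarrow(2)\Rightarrow(4)\Rightarrow(3)\Rightarrow(1)$. The substantive difference is in the ultraproduct step: the paper's $(2)\Rightarrow(3)$ must take products $A=\prod_f A_f$ and $A'=\prod_f A'_f$ of the witnessing embeddings, and since the product embedding $\gamma$ need not lie in $\mathcal{C}$, the paper invokes Lemma~\ref{lem:prodC} (that $\mathcal{C}$-flat implies $\overline{\mathcal{C}}$-flat under Condition~(Free)) to apply flatness of the ultraproduct $\mathcal{U}$ to $\gamma$. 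By aiming instead at $(4)$, you keep the single fixed embedding $\tau_{\mathcal{S}}\in\mathcal{C}$ on the right-$S$-poset side and take an ultraproduct only on the $B$-side, so ordinary $\mathcal{C}$-flatness of $B^{\ast}$ suffices and Lemma~\ref{lem:prodC} is not needed --- exactly the point you flag. The price is that your $(4)\Rightarrow(3)$ is no longer a trivial specialisation but requires rerunning the Condition~(Free) transport argument of Lemma~\ref{lem:coflat}; this is routine and you carry it out correctly.

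Your axiomatisation in $(3)\Rightarrow(1)$ via the index sets $K_{\mathcal{S}}$ is a mild repackaging of the paper's partition $\mathbb{S}_1\cup\mathbb{S}_2$ and the sentences $\psi_{\mathcal{S}},\phi_{\mathcal{S}}$: when $K_{\mathcal{S}}=\emptyset$ your $\sigma_{\mathcal{S}}$ reduces (reading the empty disjunction as false) to the paper's $\psi_{\mathcal{S}}$, and otherwise it plays the role of $\phi_{\mathcal{S}}$. The verification that $\Sigma$ axiomatises $\mathcal{CF}$ is the same as the paper's, via Lemma~\ref{lem:coflat}(iii).
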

\begin{proof}
The implication $(1)$ implies $(2)$ is clear from \L os's Theorem.

To prove $(2) \Rightarrow (3) $, we suppose that ${\mathcal {CF}}$,
 the class of $\mathcal{C}$-flat left $S$-posets
 is closed under formation of ultraproducts and that $(3) $ is false. 
Let $J$ be the family of finite subsets of 
$ \mathbb{DOS}$.
We suppose that there exists a double ordered skeleton 
$ \mathcal{S}\in \mathbb{DOS}$
such that for every subset $f$ of $J$, there exists an embedding $\gamma_{f}:A_{f} \to A'_{f} \in \mathcal{C}$, a $\mathcal{C}$-flat left $S$-poset $B_{f}$, and pairs $(a_f \gamma_{f},b_f),\,(a'_{f} \gamma_{f},b'_{f}) \in A'_{f} \times B_{f} $ such that $(a_f \gamma_{f},b_{f})$ and $(a'_{f} \gamma_{f},b'_{f})$ are connected over $A'_{f}$ and $B_{f}$  by a double ordered tossing $\mathcal{T}_{f}$ with double ordered skeleton $\mathcal{S}$, but such that no double ordered replacement tossing over $A_{f}$ and $B_{f}$ connecting $(a_f , b_f)$ and $(a'_f,b'_f)$ has a double ordered skeleton belonging to the set $f$.

Let $J_{\mathcal {S}}=\{f \in J: \mathcal {S}  \in f \}$ for 
each $\mathcal {S}  \in \mathbb {DOS} $. Then there exists an
 ultrafilter $\Phi$ on $J$ containing each 
$J_{\mathcal {S}}$, as
each intersection of finitely many of the sets  $J_{\mathcal {S}}$
 is non-empty.

We now define $A' = \prod_{f \in J}A'_{f},\,A= \prod_{f \in J} A_f $ and $B= \prod_{f \in J} B_f $. Let $\gamma:A \to A'$ be the embedding  given by $(a_f) \gamma = (a_f \gamma_{f}) $.  
We note here that $ \underline{a} \gamma \otimes \underline{b}= 
\underline {a}' \gamma \otimes \underline{b}'$ in $ A{'} \otimes B$, 
where  $\underline{a}=(a_f),\,\underline{a}'=(a'_{f}), \, \underline{b}=(b_f) $ and $\underline{b}'=(b_{f}')$ and that this equality is determined by a double ordered tossing over $A'$ and $B$ (the `` product'' of the double ordered tossings $\mathcal{T}_{f}$'s) having double ordered skeleton $\mathcal{S}$. It follows that the equality  
for $\underline{a} \gamma \otimes
 \underline {b}_{\Phi}= \underline{a}'  \gamma \otimes \underline{b}'_{\Phi}$ 
holds also in $A' \otimes \mathcal{U}$ where $\mathcal{U}= (\prod_{f \in J}B_f)/{\Phi}$,
 and can be determined by a double ordered tossing over $A'$ and $\mathcal{U}$ with 
double ordered skeleton $\mathcal S$.

 By assumption, $\mathcal{U}$ is $\mathcal{C}$-flat, and by Lemma ~\ref{lem:prodC} above, $\underline{a} \otimes \underline{b}_{\Phi }= \underline{a}'\otimes b'_{\Phi}$ in $ A \otimes \mathcal{U}$, say via a double ordered tossing with double ordered
 skeleton $\mathcal{V}=(\mathcal{V}_1,\mathcal{V}_2)$ of length $h+k$, say
 \[\mathcal{V}_1=(d_1,e_1,\hdots ,d_h,e_h)\mbox{ and }
 \mathcal{V}_2=(g_1,\ell_1,\hdots ,g_k,\ell_k).\]
 
 Hence
 \[A\models \delta_{\mathcal{V}}(\underline{a},\underline{a}')
 \mbox{ and }
 \mathcal{U}\models \gamma_{\mathcal{V}}(\underline{b}_{\Phi},\underline{b}'_{\Phi}).\]
 Certainly $A_f\models \delta_{\mathcal{V}}(a_f,a'_f)$ for every $f$. Considering
 now the truth of $\gamma_{\mathcal{V}}(\underline{b}_{\Phi},\underline{b}'_{\Phi})$,
 there exist
 \[(b_{1,f})_{\Phi},\hdots ,(b_{h,f})_{\Phi},(c_{1,f})_{\Phi},\hdots,
 (c_{k,f})_{\Phi}\in\mathcal{U}\]
 such that
 \[\begin{array}{cc}
 \begin{array}{rcl}
 \underline{b}_{\Phi}&\leq&d_1(b_{1,f})_{\Phi}\\
 e_1(b_{1,f})_{\Phi}&\leq &d_2(b_{2,f})_{\Phi}\\
 &\vdots&\\
 e_h(b_{h,f})_{\Phi}&\leq &\underline{b}'_{\Phi}
 \end{array}&
 \begin{array}{rcl}
 
 \underline{b}'_{\Phi}&\leq&g_1(c_{1,f})_{\Phi}\\
 \ell_1(c_{1,f})_{\Phi}&\leq &g_2(c_{2,f})_{\Phi}\\
 &\vdots&\\
 \ell_k(c_{k,f})_{\Phi}&\leq &\underline{b}_{\Phi}.
 \end{array}
 \end{array}\]
 As $\Phi $ is closed under finite intersections, there exists $D \in \Phi$ such that
 
 \[\begin{array}{cc}
 \begin{array}{rcl}
 b_f&\leq&d_1b_{1,f}\\
 e_1b_{1,f}&\leq &d_2b_{2,f}\\
 &\vdots&\\
 e_hb_{h,f}&\leq &b_f'
 \end{array}&
 \begin{array}{rcl}
 
 b_f'&\leq&g_1c_{1,f}\\
 \ell_1c_{1,f}&\leq &g_2c_{2,f}\\
 &\vdots&\\
 \ell_kc_{k,f}&\leq &b_f
 \end{array}
 \end{array}\]    
for all $f \in D$.

Now suppose that $ f \in D \, \cap \, J_{\mathcal V}$, then from the double ordered  tossing just considered, we see that $\mathcal{V}$ is the double ordered skeleton of a double ordered tossing over $A_f$ and $B_f$ connecting the pairs $(a_f,b_f)$ and $(a'_f,b'_f)$; that is, $\mathcal{V}$ a double ordered replacement skeleton for the double ordered skeleton $\mathcal{S}$ of the double ordered  tossing $\mathcal{T}_{f}$. But $\mathcal{V}$ belongs to $f$, a contradiction. This completes the proof that $(2)$ implies that $(3)$.

It is clear that $(3)$ implies that $(4)$.
 
Now we want to prove that $ (4) \Rightarrow  (1) $. We  assume 
that $(4)$ holds. We aim to use this condition to construct a set of axioms for $\mathcal{CF}$.

Let $\mathbb{S}_{1}$ denote the set of all elements of $\mathbb{DOS}$ such that if $\mathcal{S} \in \mathbb{S}_{1}$, then there is no $\mathcal{C}$-flat left $S$-poset $B$ such that $\gamma_{\mathcal{S}}(b,b') \in B$ for any $ b,b' \in B$. For $\mathcal{S} \in \mathbb{S}_{1}$ we put
$$ \psi_{\mathcal{S}}: (\forall x)(\forall x')\neg \gamma_{\mathcal{S}}(x,x')$$

For
 ${\mathcal S} \in {\mathbb S_2}= \mathbb{DOS} \setminus \mathbb{S}_{1}  $, there must be a $B \in \mathcal{CF}$ and $b,b' \in B$  such that $\gamma_{\mathcal{S}}(b,b')$ is true in $B$, whence there is a double ordered  tossing from $(u_{\mathcal{S}} \tau_{\mathcal{S}},b)$ to $(u'_{\mathcal{S}} \tau_{\mathcal{S}},b')$ over $W'_{\mathcal{S}}$ and $B$ with double ordered skeleton $\mathcal{S}$.

Let $\mathcal{S}_{1}, \cdots, \mathcal{S}_{\beta(\mathcal{S})}$ be a minimum set of double ordered replacement skeletons for double ordered  tossings with double ordered skeleton $\mathcal{S}$ connecting pairs of the form $(u_{\mathcal{S}} \tau_{\mathcal{S}},c)$ to $(u'_{\mathcal{S}} \tau_{\mathcal{S}},c')$ where $ c,c' \in C$  and $C$ ranges over $\mathcal{CF}$. Hence for each $ k$ in $ \{ 1, \cdots, \beta(\mathcal{S})\}$,
 there exists a $\mathcal{C}$-flat left $S$-poset $C_k$, elements $c_k,c'_k \in C_k$ such that 
 \[W_{\mathcal{S}}\models \delta_{\mathcal{S}_k}(u_{\mathcal{S}},u'_{\mathcal{S}})
 \mbox{ and }C_k\models \gamma_{\mathcal{S}_k}(c_k,c_k').\]
 
 We define $\phi_{\mathcal{S}}$ to be the sentence 
  
$$ \phi_{\mathcal S}:= (\forall y)(\forall y')
\big(\gamma_{\mathcal S}(y,y') \to {\gamma_{\mathcal S_1}(y,y')} \vee
 \hdots \vee {\gamma_{\mathcal S_{\beta(\mathcal S)}}(y,y')}\big).$$

Let $$ \Sigma_{\mathcal{CF}}= \{\psi_{\mathcal S}: {\mathcal S} \in {\mathbb S_1} \} \cup \{\phi_{\mathcal S}: {\mathcal S} \in {\mathbb S_2}\}.$$

\noindent We claim that $\Sigma_{\mathcal{CF}}$ axiomatises $\mathcal{CF}$.

Suppose first that $D$ is any $\mathcal{C}$-flat left $S$-poset.
By choice of $\mathbb{S}_{1}$, it is clear that $D \models \psi_{\mathcal{S}}$ for any $\mathcal{S} \in \mathbb{S}_{1}$.

Now take any ${\mathcal S} \in {\mathbb S_2}$, and 
suppose that $ d,d' \in D$ are such that $D$ satisfies 
$\gamma_{\mathcal S}(d,d')$. Then, as noted earlier $(u_{\mathcal{S}} \tau_{\mathcal{S}},d)$ and $(u'_{\mathcal{S}} \tau_{\mathcal{S}},d')$ are joined over $W'_{\mathcal{S}}$ and $D$ by a double ordered tossing with double ordered skeleton $\mathcal{S}$, and therefore, by assumption, there is a double ordered tossing over $W_{\mathcal{S}}$ and $D$ joining $(u_{\mathcal{S}},d)$ and $(u'_{\mathcal{S}}, d')$ with double ordered skeleton $\mathcal{S}_{k}$ for some $ k \in \{ 1, \cdots, \beta(\mathcal{S})\}$. It is now clear that $\gamma_{\mathcal{S}_{k}}(d,d')$ holds  in $D$, as required. We have now shown that $D \models \Sigma_{\mathcal{CF}}$.

Finally we show that a left $S$-poset $C$ that satisfies 
$\Sigma_{\mathcal{CF}}$ must be a $\mathcal{C}$-flat. We need  to show that condition $(3) $ of 
Lemma~\ref{lem:coflat}
 holds for $C$. Let 
${\mathcal S}
 \in \mathbb{DOS}$ and suppose we have a double ordered 
tossing with double ordered skeleton $\mathcal{S}$
connecting $(u_{\mathcal{S}}\tau_{\mathcal{S}},c)$ and
$(u_{\mathcal{S}}'\tau_{\mathcal{S}},c')$ over $W_{\mathcal{S}}'$ and $C$. Then
\[W_{\mathcal{S}}'\models \delta_{\mathcal{S}}(u_{\mathcal{S}}\tau_{\mathcal{S}},
u_{\mathcal{S}}'\tau_{\mathcal{S}})
\mbox{ and }
C\models \gamma_{\mathcal{S}}(c,c').\]

If ${\mathcal S}$ belonged to ${\mathbb{S}_1}$, then $C$ would satisfy the sentence $(\forall y)(\forall y'){\neg \gamma_{\mathcal S}(y,y')}$ and so $\neg \gamma_{\mathcal S}(c,c')$ would hold, which is would be a contradiction. Therefore we conclude that $ {\mathcal S}$ belongs to ${\mathbb S_2}$.
Because $C$ satisfies $\phi_{\mathcal S}$ and 
because $\gamma_{\mathcal S}(c,c')$ holds, it follows that 
$ \gamma_{\mathcal S_{k}}(c,c')$ holds for some
 $k \in \{1,2,\cdots, \beta (\mathcal S) \}$. But
 $W_{\mathcal{S}}\models \delta_{\mathcal{S}_k}(u_{\mathcal{S}},u'_{\mathcal{S}_k})$, 
 whence $(u_{\mathcal{S}},c) $ and $(u'_{\mathcal{S}},c')$ are connected
 via a double ordered tossing 
 over $W_{\mathcal{S}}$ and $C$ with double ordered skeleton $\mathcal{S}_k$, showing that $C$ is $\mathcal{C}$-flat.  
\end{proof}

We recall that the definition of a flat $S$-poset is that it is $\mathcal{C}$-flat where
$\mathcal{C}$ is the class of {\em all}  embeddings of right $S$-posets.
The class of  all flat left $S$-posets is denoted by $\mathcal{F}$.

By Lemma~\ref{lem:allfree}, the class of all right $S$-posets has Condition (Free),  so from
Theorem~\ref{thm:FoutP}, we immediately have the following corollary. 


\begin{cor}\label{cor:flat}  The following conditions are equivalent for an ordered  monoid $S$:

 (1) the class $\mathcal{F}$ is axiomatisable;
 
 (2) the class $\mathcal{F}$ is closed under formation of ultraproducts;
 
  (3) for every double ordered skeleton $\mathcal{S} \in \mathbb{DOS}$ there exist finitely many double ordered replacement skeletons $\mathcal{S}_{1},\cdots,\mathcal{S}_{\alpha(\mathcal{S})}$ such that, for any right $S$-poset ordered  
  embedding $\gamma:A\rightarrow A'$, and any flat left $S$-poset $B$, if $(a\gamma  ,b),\, (a'\gamma , b') \in A' \times B$  are connected by a double ordered  tossing  $\mathcal{T}$ over $A'$ and $B$ with $ \mathcal{S}(\mathcal{T})= \mathcal{S}$, then $(a,b)$ and $(a',b')$ are connected by a double ordered tossing ${\mathcal{T}}^{'}$ over $A$ and $B$ such that $\mathcal{S}({\mathcal{T}}^{'})= \mathcal{S}_{k}$, for some $ k \in \{1,\cdots,\alpha(\mathcal{S})\}$;
 
 (4) for every double ordered skeleton $\mathcal{S} \in \mathbb{DOS}$ there exist finitely many double ordered replacement skeletons $\mathcal{S}_{1},\cdots,\mathcal{S}_{\alpha(\mathcal{S})}$ such that, for any right $S$-poset $A$ and any flat left $S$-poset $B$, if $(a  ,b),\, (a' , b') \in A \times B$  are connected by a double ordered tossing  $\mathcal{T}$ over $A$ and $B$ with $ \mathcal{S}(\mathcal{T})= \mathcal{S}$, then $(a,b)$ and $(a',b')$ are connected by a double ordered tossing ${\mathcal{T}}^{'}$ over $aS \cup a'S$ and $B$ such that $\mathcal{S}({\mathcal{T}}^{'})= \mathcal{S}_{k}$, for some $ k \in \{1,\cdots,\alpha(\mathcal{S})\}$;
 
 (5) for every double ordered skeleton $\mathcal{S} \in \mathbb{DOS}$ there exists finitely many double ordered replacement skeletons $\mathcal{S}_{1},\cdots,\mathcal{S}_{\beta(\mathcal{S})}$ such that, for any flat left $S$-poset $B$, if $([x],b)$ and $([x'],b') $ are connected by a double ordered  tossing $\mathcal{T}$ over $F^{m+n}/\equiv_{\mathcal{S}}$ and $B$ with $\mathcal{S}(\mathcal{T}) = \mathcal{S}$, then $([x], b)$, and $([x'],b')$ are connected by a double ordered  tossing ${\mathcal{T}}^{'}$ over $[x]S \cup [x']S$ and $B$ such that $\mathcal{S}({\mathcal{T}}^{'})= \mathcal{S}_{k}$, for some $ k \in \{1,\cdots,\beta(\mathcal{S})\}.$
 
\end{cor}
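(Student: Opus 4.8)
The plan is to deduce the corollary from Theorem~\ref{thm:FoutP} by taking $\mathcal{C}$ to be the class of \emph{all} ordered embeddings of right $S$-posets. By Lemma~\ref{lem:allfree} this class satisfies Condition (Free), so the theorem applies, and since $\mathcal{CF}=\mathcal{F}$ in this case, its conditions (1) and (2) are verbatim conditions (1) and (2) of the corollary, while its condition (3) is condition (3) of the corollary (the phrase ``embedding $\gamma:A\to A'$ in $\mathcal{C}$'' becoming ``any right $S$-poset ordered embedding $\gamma:A\to A'$''). Moreover Lemma~\ref{lem:allfree} supplies the explicit witnesses $W_{\mathcal{S}}=[x]S\cup [x']S$, $W'_{\mathcal{S}}=F^{m+n}/\equiv_{\mathcal{S}}$, $u_{\mathcal{S}}=[x]$, $u'_{\mathcal{S}}=[x']$, with $\tau_{\mathcal{S}}$ the inclusion; with these substitutions, condition (4) of Theorem~\ref{thm:FoutP} reads exactly as condition (5) of the corollary. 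Hence conditions (1), (2), (3), (5) of the corollary are already equivalent, and the only remaining task is to insert condition (4) into this established chain.

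I would do this by proving $(3)\Rightarrow(4)\Rightarrow(5)$. For $(3)\Rightarrow(4)$, suppose $A$ is a right $S$-poset, $B$ is flat, and $(a,b),(a',b')$ are connected over $A$ and $B$ by a double ordered tossing of skeleton $\mathcal{S}$. Set $A_0=aS\cup a'S$ and let $\gamma:A_0\to A$ be the inclusion, which is an ordered embedding since $A_0$ is a sub-$S$-poset of $A$ with the induced order. Then $a,a'\in A_0$ with $a\gamma=a$ and $a'\gamma=a'$, so the given tossing connects $(a\gamma,b)$ and $(a'\gamma,b')$ over $A$ and $B$; condition (3) now yields a double ordered tossing over $A_0=aS\cup a'S$ and $B$ whose skeleton is one of the finitely many $\mathcal{S}_1,\hdots,\mathcal{S}_{\alpha(\mathcal{S})}$, which is precisely the conclusion of (4). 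For $(4)\Rightarrow(5)$, I would specialise (4) to the right $S$-poset $A=F^{m+n}/\equiv_{\mathcal{S}}$ with $a=[x]$ and $a'=[x']$, so that $aS\cup a'S=[x]S\cup [x']S$; a tossing connecting $([x],b)$ and $([x'],b')$ over $F^{m+n}/\equiv_{\mathcal{S}}$ and $B$ with skeleton $\mathcal{S}$ is then replaced, by (4), by one over $[x]S\cup [x']S$ and $B$ with a replacement skeleton, which is exactly (5). Since (5) implies (1) by the theorem, the cycle closes and all five conditions are equivalent.

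There is no substantial obstacle here, as the corollary is essentially a specialisation of Theorem~\ref{thm:FoutP}. The only points requiring care are the bookkeeping of the witnesses from Lemma~\ref{lem:allfree}, needed to see that the theorem's condition (4) literally becomes the corollary's condition (5), and the elementary but essential observation that a sub-$S$-poset inclusion is an ordered embedding, which is what allows condition (3) to be invoked with $\gamma$ the inclusion of $aS\cup a'S$. Throughout, the finiteness of the replacement-skeleton sets is preserved, the index $\alpha(\mathcal{S})$ being carried unchanged from (3) into (4) and $\beta(\mathcal{S})$ appearing in (5) as inherited from the theorem.
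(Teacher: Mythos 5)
Your proposal is correct and follows exactly the route the paper intends: the paper offers no explicit proof, simply asserting that the corollary is immediate from Theorem~\ref{thm:FoutP} once Lemma~\ref{lem:allfree} supplies Condition (Free) and the witnesses $W_{\mathcal{S}}=[x]S\cup[x']S$, $W'_{\mathcal{S}}=F^{m+n}/\equiv_{\mathcal{S}}$. Your explicit insertion of the corollary's condition (4) via $(3)\Rightarrow(4)\Rightarrow(5)$, using the inclusion $aS\cup a'S\hookrightarrow A$ as the ordered embedding, is the correct bookkeeping that the paper leaves tacit.
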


\subsection{Axiomatisability of $\mathcal{CF}$ in the general case}\label{subsec:sposetsoutFP}

We continue to consider a class $\mathcal{C}$ of  ordered embeddings of right $S$-posets , but now drop our assumption that Condition (Free)  holds. The results and proofs of this section are analogous to those for weakly flat 
$S$-acts in \cite{bulmanfleminggould}. Note that the conditions in (3) below appear weaker than those in 
Theorem~\ref{thm:FoutP}, as we are only asking that for specific elements $a,a'$ and double ordered  skeleton
$\mathcal{S}$, there are finitely many double ordered replacement skeletons, in the sense made specific below.

\begin{thm}\label{thm:sposetsoutFP} Let $\mathcal{C}$ be a class of   embeddings of right $S$-posets.

The following conditions are equivalent:

(1) the class $\mathcal{CF}$ is axiomatisable;

(2) the class $\mathcal{CF}$ is closed under ultraproducts;

(3) for every double ordered skeleton $\mathcal{S}\in\mathbb{DOS} $  and $a,a' \in A$, where $\mu:A \to A'$ is in $\mathcal{C}$, there exist finitely many double ordered skeleton $\mathcal{S}_{1},\cdots,\mathcal{S}_{\alpha(a,\mathcal{S},a',\mu)}$, such that for any $\mathcal{C}$-flat left $S$-poset $B$, if $(a \mu , b),\,(a' \mu ,b')$ are connected by a double ordered tossing $\mathcal{T}$ over $A'$ and $B$ with $\mathcal{S}(\mathcal{T})= \mathcal{S}$, then $(a,b)$ and $(a',b')$ are connected by a double ordered tossing $\mathcal{T}'$ over $A$ and $B$ such that $\mathcal{S}(\mathcal{T}')=\mathcal{S}_{k}$, for some $ k \in \{1,\cdots,\alpha(a , \mathcal{S},a',\mu)\}$.
\end{thm}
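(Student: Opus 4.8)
The plan is to prove the three-way equivalence by establishing the cycle $(1)\Rightarrow(2)\Rightarrow(3)\Rightarrow(1)$, mirroring the structure of the proof of Theorem~\ref{thm:FoutP} but without invoking Condition (Free). The implication $(1)\Rightarrow(2)$ is immediate from \L os's Theorem (Theorem~\ref{thm:los}), since an axiomatisable class is closed under ultraproducts. So the real work lies in $(2)\Rightarrow(3)$ and $(3)\Rightarrow(1)$.

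For $(2)\Rightarrow(3)$ I would argue by contradiction, adapting the ultraproduct construction used in the proof of $(2)\Rightarrow(3)$ of Theorem~\ref{thm:FoutP}. Assuming $(3)$ fails, there is a fixed embedding $\mu:A\to A'$ in $\mathcal{C}$, fixed $a,a'\in A$, and a fixed double ordered skeleton $\mathcal{S}$, such that for every finite set $f$ of double ordered skeletons there is a $\mathcal{C}$-flat left $S$-poset $B_f$ and pairs $(a\mu,b_f),(a'\mu,b'_f)$ connected over $A'$ and $B_f$ by a double ordered tossing with skeleton $\mathcal{S}$, yet no replacement tossing over $A$ and $B_f$ connecting $(a,b_f)$ and $(a',b'_f)$ has skeleton lying in $f$. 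Taking an ultrafilter $\Phi$ on the index set $J$ of finite subsets of $\mathbb{DOS}$ that contains each $J_{\mathcal{V}}=\{f:\mathcal{V}\in f\}$, I form the ultraproduct $\mathcal{U}=(\prod_{f}B_f)/\Phi$, which is $\mathcal{C}$-flat by hypothesis $(2)$. The key point is that $\mathcal{S}$-tossings are captured by the first-order formulas $\delta_{\mathcal{S}}$ and $\gamma_{\mathcal{S}}$ via Remark~\ref{rem:formulas}$(i)$: since $\delta_{\mathcal{S}}(a\mu,a'\mu)$ holds in $A'$ and $\gamma_{\mathcal{S}}$ holds componentwise, one gets $a\mu\otimes \underline{b}_{\Phi}=a'\mu\otimes \underline{b}'_{\Phi}$ in $A'\otimes\mathcal{U}$. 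Because $\mathcal{U}$ is $\mathcal{C}$-flat and $\mu\in\mathcal{C}$, this descends to $a\otimes\underline{b}_{\Phi}=a'\otimes\underline{b}'_{\Phi}$ in $A\otimes\mathcal{U}$, realised by some double ordered tossing with skeleton $\mathcal{V}$. Pushing $\gamma_{\mathcal{V}}(\underline{b}_{\Phi},\underline{b}'_{\Phi})$ back to a \L os set $D\in\Phi$ shows that for $f\in D\cap J_{\mathcal{V}}$ the pairs $(a,b_f),(a',b'_f)$ are connected over $A$ and $B_f$ by a tossing with skeleton $\mathcal{V}\in f$, the desired contradiction.

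For $(3)\Rightarrow(1)$ I would build an explicit axiom set, again following the template of the proof of $(4)\Rightarrow(1)$ of Theorem~\ref{thm:FoutP}, but now indexing the replacement skeletons by the data $(a,\mathcal{S},a',\mu)$ rather than by $\mathcal{S}$ alone. Partition $\mathbb{DOS}$ into $\mathbb{S}_1$, the skeletons $\mathcal{S}$ for which no $\mathcal{C}$-flat $B$ admits $b,b'$ with $\gamma_{\mathcal{S}}(b,b')$ true, and $\mathbb{S}_2=\mathbb{DOS}\setminus\mathbb{S}_1$. For $\mathcal{S}\in\mathbb{S}_1$ take the sentence $\psi_{\mathcal{S}}:=(\forall x)(\forall x')\neg\gamma_{\mathcal{S}}(x,x')$. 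For $\mathcal{S}\in\mathbb{S}_2$ the finiteness in $(3)$ lets me collect a finite set of replacement skeletons and form a sentence of the shape $\phi_{\mathcal{S}}:=(\forall y)(\forall y')\big(\gamma_{\mathcal{S}}(y,y')\to\gamma_{\mathcal{S}_1}(y,y')\vee\cdots\vee\gamma_{\mathcal{S}_r}(y,y')\big)$, and I set $\Sigma_{\mathcal{CF}}$ to be the union of all these. The verification that every $\mathcal{C}$-flat $B$ models $\Sigma_{\mathcal{CF}}$, and conversely that any model satisfies the $\mathcal{C}$-flatness criterion, runs as in Theorem~\ref{thm:FoutP}, but with one genuinely new difficulty, which I expect to be the main obstacle: in the earlier theorem the universal quantifiers range over the canonical generators $u_{\mathcal{S}},u'_{\mathcal{S}}$ of a single test object $W_{\mathcal{S}}$ supplied by Condition (Free), whereas here the replacement depends on the particular elements $a,a'$ in the particular embedding $\mu\in\mathcal{C}$. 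The care needed is to show that the $\gamma$-formulas, which depend only on the left-hand factor $B$, suffice to encode the replacement uniformly across all admissible $(a,\mathcal{S},a',\mu)$, so that a single finite disjunction per $\mathcal{S}$ can still be extracted; this is exactly where the weaker hypothesis of $(3)$ must be leveraged, using that the truth of $\delta_{\mathcal{S}}$ on the left factor is preserved under the pomorphism $\mu$ by Remark~\ref{rem:formulas}$(ii)$ and that $\mu$ being an embedding lets the replacement descend from $A'$ to $A$.
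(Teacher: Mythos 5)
Your implications $(1)\Rightarrow(2)$ and $(2)\Rightarrow(3)$ match the paper's argument: the ultrafilter on the finite subsets of $\mathbb{DOS}$, the transfer of the skeleton-$\mathcal{S}$ tossing to $A'\otimes\mathcal{U}$, the descent to $A\otimes\mathcal{U}$ using $\mathcal{C}$-flatness of $\mathcal{U}$ and $\mu\in\mathcal{C}$, and the \L os\ set $D\cap J_{\mathcal{V}}$ all go through as you describe (note that since $A'$ is fixed here, you do not even need the analogue of Lemma~\ref{lem:prodC}).

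The gap is in $(3)\Rightarrow(1)$. You propose to partition $\mathbb{DOS}$ itself and to attach to each $\mathcal{S}\in\mathbb{S}_2$ a \emph{single} sentence $\phi_{\mathcal{S}}$ with one finite disjunction of replacement formulas, and you correctly flag that this requires the replacement skeletons to be chosen uniformly over all admissible data $(a,\mathcal{S},a',\mu)$ sharing the skeleton $\mathcal{S}$. But condition $(3)$ gives finiteness only \emph{per quadruple} $(a,\mathcal{S},a',\mu)$ -- that is precisely how it is weaker than condition $(3)$ of Theorem~\ref{thm:FoutP}, as the paper remarks just before the statement -- and there is no way to extract a single finite list per $\mathcal{S}$ from it: the union over infinitely many quadruples with the same $\mathcal{S}$ of their finite replacement lists may be infinite, and Remark~\ref{rem:formulas}$(ii)$ does not repair this. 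The resolution, which you do not supply, is simply to index the axioms by the quadruples rather than by the skeletons: let $\mathbb{T}'$ be the set of quadruples $T=(a,\mathcal{S},a',\mu)$ with $\delta_{\mathcal{S}}(a\mu,a'\mu)$ true, split it into $\mathbb{T}_1$ (no $\mathcal{C}$-flat $B$ realises $\gamma_{\mathcal{S}}$) and $\mathbb{T}_2$, and take one sentence $\psi_T$ or $\phi_T$ for each $T$, where $\phi_T$ is $(\forall y)(\forall y')\bigl(\gamma_{\mathcal{S}}(y,y')\to\gamma_{\mathcal{S}_1}(y,y')\vee\cdots\vee\gamma_{\mathcal{S}_{\alpha(T)}}(y,y')\bigr)$ with the finite list supplied by $(3)$ \emph{for that particular $T$}. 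Each sentence is still a first-order sentence of $L^{\leq}_S$ (the left-hand data is baked into the choice of sentence, not quantified over), every $\mathcal{C}$-flat $B$ models all of them, and conversely a model $C$ is $\mathcal{C}$-flat because any witnessed equality $a\mu\otimes c=a'\mu\otimes c'$ with skeleton $\mathcal{S}$ determines a quadruple in $\mathbb{T}_2$, whose sentence, together with $A\models\delta_{\mathcal{S}_k}(a,a')$, produces the replacement tossing over $A$ and $C$. Without this change of indexing your axiom set cannot be constructed from the hypothesis.
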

\begin{proof}
The implication $(1)$ implies $(2)$ is clear from \L os's Theorem.

To prove $(2) \Rightarrow (3) $, we suppose that $\mathcal {CF}$,
 the class of $\mathcal{C}$-flat left $S$-posets,
 is closed under formation of ultraproducts, and assume that $(3) $ is false. 
Let $J$ be the family of finite subsets of 
$ \mathbb{DOS}$.
We suppose that for some double ordered skeleton 
$ {\mathcal S}\in \mathbb{DOS}$, for some ordered embedding  
 $\mu:A \to A' \in \mathcal{C}$, and for some $a,a' \in A$, for every $ f \in J$ there is a $\mathcal{C}$-flat left $S$-poset $B_{f}$, and $b_f , b'_f \in B_f$ such that $(a \mu,b_{f})$ and $(a'\mu,b'_{f})$ are connected over $A'$ and $B_{f}$  by a double ordered tossing $\mathcal{T}_{f}$ with double ordered skeleton $\mathcal{S}$, but such that no double ordered replacement tossing over $A$ and $B_{f}$  connecting $(a , b_f)$ and $(a',b'_f)$ has a double ordered skeleton belonging to the set $f$.

Let $J_{\mathcal S}=\{f \in J: {\mathcal S } \in f \}$ for 
each ${\mathcal S } \in {\mathbb S} $. Now we are able to define an
 ultrafilter $\Phi$ on $J$ containing each 
$J_{\mathcal S}$ for all $ {\mathcal S} \in \mathbb S$, as
each intersection of finitely many of the sets  $J_{\mathcal S}$
 is non-empty.

We note here that ${a} \mu \otimes \underline{b}= 
{a}^{'} \mu \otimes \underline{b}^{'}$ in $ A{'} \otimes B$,  where $B= \prod_{f \in J}B_{f}$, $\underline{b}=(b_f)$ and $\underline{b}'=(b_f')$, and that this equality is determined by a double ordered tossing over $A'$ and $B$ (the ``product'' of the double ordered tossings $\mathcal{T}_{f}$) having double ordered  skeleton $\mathcal{S}$. It follows that the equality  
for ${a} \mu \otimes
 \underline {b}_{\Phi}= {a}^{'} \mu \otimes \underline{b}^{'}_{\Phi}$ 
holds also in $A' \otimes {\mathcal{U}}$ where $\mathcal{U}= (\prod_{f \in J}B_f)/{\Phi}$,
 and can be determined by a double ordered tossing over $A'$ and $\mathcal{U}$ with double ordered 
skeleton $\mathcal S$.

By assumption $\mathcal{U}$ is $\mathcal{C}$-flat, so that $(a,\underline{b}_{\Phi})$ and $(a',\underline{b}^{'}_{\Phi})$ are connected via a double ordered  replacement tossing over $A$ and $\mathcal{U}$, with double ordered
skeleton $\mathcal{V}$ say. Hence
\[A\models \delta_{\mathcal{V}}(a,a')\mbox{ and }
\mathcal{U}\models \gamma_{\mathcal{V}}(\underline{b}_{\Phi},\underline{b}_{\Phi}).\]

As in Theorem~\ref{thm:FoutP}, there exists $D\in \Phi$ such that $B_f\models \gamma_{\mathcal{V}}(b_f,b_f')$ for all $f\in D$.

Now suppose that $ f \in D \cap J_{\mathcal V}$. Then $\mathcal{V}$ is the double  ordered skeleton of a double ordered tossing over $A$ and $B_f$ connecting the pairs $(a,b_f)$ and $(a',b'_f)$; that is, $\mathcal{V}$ is a double ordered  replacement skeleton for double ordered skeleton $\mathcal{S}$ of the  double ordered tossing $\mathcal{T}_{f}$. But $\mathcal{S}$ belongs to $f$, a contradiction. This completes the proof that $(2)$ implies that $(3)$.

Finally, suppose that $(3)$ holds. Let
\[\mathbb{T}'=\{ (a,\mathcal{S},a',\mu):\mathcal{S}\in \mathbb{DOS}, \mu:A\rightarrow A'\in \mathcal{C},
 a,a'\in A', \delta_{\mathcal{S}}(a\mu,a'\mu)\mbox{ holds}\}.\]

We introduce sentences corresponding to  elements of $\mathbb{T}'$ in such a way that the resulting set of sentences axiomatises the class $\mathcal{CF}$.

We let $\mathbb{T}_{1}$ be the set of 
$(a,\mathcal{S},a',\mu)\in\mathbb{T}'$ 
such that $\gamma_{\mathcal{S}}(b,b')$ does not hold for any $b,b'$ in
any $\mathcal{C}$-flat left $S$-poset $B$, and put $\mathbb{T}_{2}= \mathbb{T}' \setminus \mathbb{T}_{1}$. For $ T= ( a,\mathcal{S},a',\mu) \in \mathbb{T}_{1}$ we let $$ \psi_{T}= \psi_{\mathcal{S}}:(\forall x)(\forall x')\neg\gamma_{\mathcal{S}}(x,x').$$ If $T=(a,\mathcal{S},a',\mu) \in \mathbb{T}_{2}$, then $\mathcal{S}$ is the double ordered skeleton of some double ordered tossing joining $(a \mu ,b)$ to $(a' \mu,b')$ over $A'$ and some $\mathcal{C}$-flat left $S$-poset $B$. By our assumption $(3)$, there is a finite list of double ordered replacement skeletons $\mathcal{S}_{1},\cdots,\mathcal{S}_{\alpha(T)}.$ Choosing 
$\alpha(T)$ to be minimal,  for each $k \in \{1,\cdots,\alpha(T)\}$,
 there exist a $\mathcal{C}$-flat left $S$-poset $C_k$ and
 elements $c_k,c_k' \in C_k$, such that 
 \[ A\models \delta_{\mathcal{S}_k}(a,a')\mbox{ and }C_k\models \gamma_{\mathcal{S}_k}(c_k,c_k').\]
 We let $\phi_{T}$ be the sentence 
$$\phi_{T}=(\forall y)(\forall y')(\gamma_{\mathcal{S}}(y,y') \to \gamma_{\mathcal{S}_{1}}(y,y')\vee \cdots \vee \gamma_{\mathcal{S}_{\alpha(T)}}(y,y'))$$
Let $$\sum_{\mathcal{CF}}=\{ \psi_{T}:T \in \mathbb{T}_{1} \} \cup \{\phi_{T}:T \in \mathbb{T}_{2} \}$$
We claim that $\sum_{\mathcal{CF}}$ axiomatises $\mathcal{CF}$.

Suppose first that $D$ is any $\mathcal{C}$-flat left $S$-poset. Let $ T 
=(a,\mathcal{S},a',\mu)\in \mathbb{T}_{1}$. Then
$\gamma_{\mathcal{S}}(b,b')$ is not true for any $b,b'\in B$, for any
$\mathcal{C}$-flat left $S$-poset $B$, so certainly $D \models \psi_{T}$. 

On the other hand, let $T=(a,\mathcal{S},a',\mu) \in \mathbb{T}_{2}$, 
and let $d,d' \in D$ be such that $\gamma_{\mathcal{S}}(d,d')$ is true.
Together with the fact $\delta_{\mathcal{S}}(a\mu,a'\mu)$
holds,  we have  that $(a \mu, d)$ is connected to $(a' \mu,d')$ over $A'$ and $D$ via  a  
double ordered tossing with double ordered skeleton $\mathcal{S}$. Because $D$ is $\mathcal{C}$-flat, $(a,d)$ and $(a',d')$ are connected over $A$ and $D$, and by assumption $(3)$, we can take the double ordered 
replacement tossing to have double ordered skeleton one of $\mathcal{S}_{1},\cdots, \mathcal{S}_{\alpha(T)}$, say $\mathcal{S}_{k}$. Thus $D \models \gamma_{\mathcal{S}_{k}}(d,d^{'})$ and it follows that $D \models \phi_{T}$. Hence $D$ is a model of $\sum_{\mathcal{CF}}$.

Conversely, we show that every model of $\sum_{\mathcal{CF}}$ is $\mathcal{C}$-flat. Let ${C} \models \sum_{\mathcal{CF}}$ and suppose that $ \mu:A \to A' \in \mathcal{C}, \, a,a' \in A, \, c,c' \in C$ and 
$a\mu\otimes c=a'\mu\otimes c'$ in $A'\otimes C$, say with double ordered tossing having
double ordered skeleton $\mathcal{S}$. Then the
quadruple $T = (a,\mathcal{S},a',\mu)\in\mathbb{T}'$. Since $\gamma_{\mathcal{S}}(c,c')$ holds, $C$ cannot be a model of $\psi_{T}$. Since $C \models \sum_{\mathcal{CF}}$ it follows that $ T \in \mathbb{T}_{2}$. But then $\phi_{T}$ holds in $C$ so that for some $ k \in \{1,\cdots,\alpha(T)\}$ we have that $\gamma_{\mathcal{S}_{k}}(c,c')$ is true.
We also know that $A\models \delta_{\mathcal{S}_k}(a,a')$, so that we have double ordered  tossing over $A$ and $C$ connecting $(a,c)$ to $(a',c')$. Thus $C$ is $\mathcal{C}$-flat.

\end{proof}

\bigskip

We now apply Theorem~\ref{thm:sposetsoutFP} to the class of all embeddings
of right ideals into $S$, and the class of all embeddings of principal right ideals
into $S$. 
In these corollaries we do not need to mention the embeddings $\mu$, since they are all inclusion maps of right ideals into $S$.

\begin{cor}\label{cor:axweaklyflat}
The following are equivalent for a pomonoid $S$:

(i) the class $\mathcal{WF}$ is axiomatisable;

(ii) the class $ \mathcal{WF}$ is closed under ultraproducts;

(iii) for every double ordered  skeleton $\mathcal{S}$  and $a,a' \in S$ there exists finitely many double ordered skeletons  $\mathcal{S}_1,\cdots, \mathcal {S}_{\beta(a,\mathcal{S},a')} $  such that for any weakly flat left $S$-poset $B$, if $(a,b)$, $(a',b') \in S \times B$ are connected by a double ordered tossing $\mathcal{ T}$ over $S$ and $B$ with $ \mathcal{S(T)}= \mathcal{S}$ then $(a,b)$ and $(a',b')$ are connected by a double ordered tossing $\mathcal{T'}$ over $aS \cup a'S$ and $B$ such that $ \mathcal{S(T')}=\mathcal{S}_k$ for some $k \in \{1,\cdots,\beta(a,\mathcal{S},a' )\}$.
\end{cor}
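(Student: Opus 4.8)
The plan is to recognise $\mathcal{WF}$ as a special case of $\mathcal{CF}$ and then to read the statement off from Theorem~\ref{thm:sposetsoutFP}. By definition a left $S$-poset $B$ is weakly flat exactly when $-\otimes B$ carries every inclusion of a right ideal of $S$ into $S$ to a one-one map; thus $\mathcal{WF}=\mathcal{CF}$ for the class $\mathcal{C}$ consisting of all inclusions $A\hookrightarrow S$ of right ideals $A$ of $S$. Since Theorem~\ref{thm:sposetsoutFP} holds for an arbitrary class $\mathcal{C}$ of right $S$-poset embeddings, its equivalence $(1)\Leftrightarrow(2)\Leftrightarrow(3)$ immediately yields $(i)\Leftrightarrow(ii)$ of the corollary together with the specialisation of its condition $(3)$ to this $\mathcal{C}$. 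As each $\mu\in\mathcal{C}$ is an inclusion we have $A'=S$, $a\mu=a$ and $a'\mu=a'$, so the only remaining task is to convert the specialised $(3)$ into the form $(iii)$.

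The substance is therefore to reconcile condition $(3)$, which quantifies over an arbitrary right ideal $A$ with $a,a'\in A$ and places the replacement tossing over $A$, with condition $(iii)$, which quantifies over $a,a'\in S$ and places the replacement tossing over $aS\cup a'S$. Here I would use that $aS\cup a'S$ is itself a right ideal, indeed the smallest one containing both $a$ and $a'$, so that its inclusion into $S$ lies in $\mathcal{C}$, and that $aS\cup a'S$ is a sub-$S$-poset of every right ideal $A$ containing $a$ and $a'$. To obtain $(iii)$ from the specialised $(3)$, given $\mathcal{S}$ and $a,a'\in S$ I would apply $(3)$ with $A=aS\cup a'S$, noting $a,a'\in aS\cup a'S$: the replacement tossing it supplies is then over $aS\cup a'S$, and the resulting finite list of replacement skeletons depends only on $a,\mathcal{S},a'$, providing the parameter $\beta(a,\mathcal{S},a')$. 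For the reverse, given a right ideal $A$ and $a,a'\in A$, I would apply $(iii)$ to the same pair $a,a'\in S$; since every entry of a tossing over $aS\cup a'S$ already lies in $aS\cup a'S\subseteq A$ and the order is inherited, such a tossing is in particular a tossing over $A$ with the same skeleton, so the finite list furnished by $(iii)$ witnesses the specialised $(3)$.

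The main obstacle is bookkeeping rather than anything conceptual. Using the description of tossings in Lemma~\ref{lem:ptossing}, one must verify that enlarging the ambient right $S$-poset from $aS\cup a'S$ to $A$ preserves both the existence of a connecting tossing and its skeleton, and one must check that the finiteness parameter $\alpha(a,\mathcal{S},a',\mu)$ of $(3)$ may be taken equal to $\beta(a,\mathcal{S},a')$ of $(iii)$ once $\mu$ is pinned down as the inclusion of $aS\cup a'S$. Both points are routine, but they are where a little care is needed to ensure that nothing depends on the particular larger ideal $A$.
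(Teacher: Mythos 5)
Your proposal is correct and matches the paper's intended derivation: the paper obtains this corollary precisely by specialising Theorem~\ref{thm:sposetsoutFP} to the class $\mathcal{C}$ of inclusions of right ideals into $S$, noting only that the embeddings $\mu$ need not be mentioned since they are all inclusions. Your reconciliation of condition $(3)$ with condition $(iii)$ — taking $A=aS\cup a'S$ in one direction and observing that a tossing over $aS\cup a'S$ is already a tossing over any right ideal $A\supseteq aS\cup a'S$ in the other — is exactly the routine bookkeeping the paper leaves implicit.
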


We end this section by considering the axiomatisability of principally weakly flat $S$-posets. We first remark that if $aS$ is a principal right ideals of $S$ and $B$ is a left $S$-poset, then 

$$ a u \otimes b = av \otimes b' \,\, {\mbox {in} }  \,\,aS \otimes B  \,\,{\mbox {if and only if} } \,\, a  \otimes ub = a \otimes vb' \,\, {\mbox  {in}} \,\, aS \otimes B$$
with a similar statement for $S \otimes B$. Thus $B$ is principally weakly flat if and only if for all $ a \in S$, if $ a \otimes b = a \otimes b' $ in $S \otimes B$, then $ a \otimes b = a \otimes b'$ in $aS \cup B.$ From Theorem~\ref{thm:sposetsoutFP} and its
proof we have the following result for $ \mathcal{PWF}$.

\begin{cor}\label{cor:axprweaklyflats}
The following conditions are equivalent for a pomonoid $\mathcal{S}$:

(i) the class $ \mathcal{PWF}$ is axiomatisable;

(ii) the class $\mathcal{PWF}$ is closed under ultraproducts;

(iii) for every double ordered skeleton $\mathcal{S}$ over $S$ and $ a \in S$ there exists finitely many double ordered skeletons $ \mathcal{S}_{1},\cdots, \mathcal{S}_{\gamma( a, \mathcal{S})}$ over $S$, such that for any principally weakly flat left $S$-poset $B$, if $(a,b), \, (a,b') \in S \otimes B$ are connected by a double ordered tossing $\mathcal{T}$ over $S$ and $B$ with $\mathcal{S(T)=S}$ then $(a,b)$ and $(a,b')$ are connected by a double ordered tossing $\mathcal{T}'$ over $aS $ and $B$ such that $\mathcal{S(\mathcal{T}')}= \mathcal{S}_{k}$ for some $k \in \{ 1,\cdots,\gamma(a, \mathcal{S}) \}$.
\end{cor}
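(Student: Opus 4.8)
The plan is to derive this corollary as a specialisation of Theorem~\ref{thm:sposetsoutFP}, taking $\mathcal{C}$ to be the class of all inclusion embeddings $\iota_a\colon aS\hookrightarrow S$ of principal right ideals of $S$ into $S$. For this choice of $\mathcal{C}$, a left $S$-poset $B$ is $\mathcal{C}$-flat exactly when $-\otimes B$ sends each such inclusion to a one-one map, which is precisely the definition of principal weak flatness; thus $\mathcal{CF}=\mathcal{PWF}$. The equivalence of $(i)$ and $(ii)$ is then immediate from the equivalence of $(1)$ and $(2)$ in Theorem~\ref{thm:sposetsoutFP}, and the whole task is to check that condition $(3)$ of that theorem, read off for this particular $\mathcal{C}$, is exactly condition $(iii)$ above.

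The first step is to use the reduction recorded immediately before the corollary to remove the redundancy in the data $(a,\mathcal{S},a',\mu)$ appearing in Theorem~\ref{thm:sposetsoutFP}$(3)$. Since the members of $\mathcal{C}$ are inclusions, the embedding $\mu=\iota_a$ is completely determined once the principal ideal $aS$ is named. Moreover every element of $aS\otimes B$ has the form $as\otimes b=a\otimes sb$, so that injectivity of $\iota_a\otimes I_B$ is equivalent to the implication ``$a\otimes b=a\otimes b'$ in $S\otimes B$ forces $a\otimes b=a\otimes b'$ in $aS\otimes B$''. In other words, one need only test pairs whose two sides share the same first coordinate, namely the generator $a$; an apparently more general test with $cs\otimes b$ and $cs'\otimes b'$ collapses, on writing $d=sb$ and $d'=s'b'$, to the case of the generator. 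Hence the quadruples of Theorem~\ref{thm:sposetsoutFP} may be replaced throughout by pairs $(a,\mathcal{S})$ with $a\in S$ and $\mathcal{S}\in\mathbb{DOS}$, and the finite index $\alpha(a,\mathcal{S},a',\mu)$ becomes $\gamma(a,\mathcal{S})$.

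With this dictionary in place, the two halves of condition $(iii)$ translate directly: a double ordered tossing over $A'=S$ and $B$ joining $(a,b)$ and $(a,b')$ is precisely a witness to $a\otimes b=a\otimes b'$ in $S\otimes B$, while a double ordered replacement tossing over $A=aS$ and $B$ is a witness to the same equality in $aS\otimes B$, and the replacement skeletons $\mathcal{S}_1,\cdots,\mathcal{S}_{\gamma(a,\mathcal{S})}$, coming from tossings over $aS\subseteq S$, are themselves elements of $\mathbb{DOS}$ over $S$. I would then simply rerun the axiom construction in the proof of Theorem~\ref{thm:sposetsoutFP}, now indexing the sentences $\psi_T$ and $\phi_T$ by the pairs $(a,\mathcal{S})$, to obtain an explicit axiomatisation whenever $(iii)$ holds, and conversely to read off $(iii)$ from closure under ultraproducts. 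I expect no genuine mathematical obstacle here; the only point requiring care is bookkeeping, namely verifying that under the identifications $a'=a$ and $\mu=\iota_a$ the reduction to the generator is applied consistently on both sides of each tossing, so that the finitely many replacement skeletons produced for the generator indeed suffice for all elements of the principal ideal.
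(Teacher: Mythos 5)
Your proposal is correct and follows essentially the same route as the paper: the authors likewise obtain the corollary by specialising Theorem~\ref{thm:sposetsoutFP} to the class of inclusions of principal right ideals, using exactly the remark that every element of $aS\otimes B$ has the form $a\otimes ub$, so that injectivity of $\iota_a\otimes I_B$ need only be tested on pairs sharing the generator $a$ as first coordinate. The reduction of the quadruples $(a,\mathcal{S},a',\mu)$ to pairs $(a,\mathcal{S})$ that you spell out is precisely the bookkeeping the paper leaves implicit.
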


\section{Axiomatisability of $\mathcal{CPF}$}\label{sec:axiopflatsposet}

In this section we briefly  explain how the methods and results of
Section~\ref{sec:tossings} may be adapted to the case when $-\otimes B$ preserves  embeddings, rather than merely taking embeddings to monomorphisms. We omit proofs, as
they follow now established patterns. Further details may be found in \cite{shaheen:2010}.

We introduce a condition on a class $\mathcal{C}$ of embeddings
of right $S$-posets called Condition (Free)$^{\leq}$.

Let
\[\mathcal{S}=(s_1,t_1,\hdots, s_m,t_m)\]
be an ordered skeleton of length $m$. We put
\[\delta^{\leq}_{\mathcal{S}}(x,x'):= (\exists x_2\hdots \exists x_m)\epsilon_{\mathcal{S}}
(x,x_2,\hdots ,x_m,x')\]
and
\[\gamma^{\leq}_{\mathcal{S}}(x,x'):=(\exists x_1\hdots \exists x_m)\theta_{\mathcal{S}}
(x,x_1,\hdots ,x_m,x')\]
where $\epsilon$ and $\theta$ are defined as in Section~\ref{sec:tossings}. Notice that similar comments to those in Remark~\ref{rem:formulas} hold, in particular, if $A$ is a right and
$B$ a left $S$-poset, then the pair $(a,b)\in A\times B$ is connected to the
pair $(a',b')\in A\times B$ via an ordered tossing with ordered skeleton $\mathcal{S}$ if and only  if $\delta_{\mathcal{S}}^{\leq}(a,a')$ is true in $A$ and
$\gamma_{\mathcal{S}}^{\leq}(b,b')$ is true in $B$.

\begin{defi}\label{defi:orderedfree} We say that $\mathcal{C}$ satisfies Condition (Free)$^{\leq}$ if for each ordered skeleton $\mathcal{S}$ there is an  embedding $\kappa_{\mathcal{S}}:V_{\mathcal{S}} \to V^{'}_{\mathcal{S}}$ in $\mathcal{C}$ and $v_{\mathcal{S}},\,v^{'}_{\mathcal{S}} \in V_{\mathcal{S}}$ such that $\delta^{\leq}_{\mathcal{S}}(v_{\mathcal{S}}\kappa_{\mathcal{S}},v^{'}_{\mathcal{S}}\kappa_{\mathcal{S}})$ is true in $V^{'}_{\mathcal{S}}$ and further for any  embedding $\mu:A \to A^{'} \in \mathcal{C}$  and any $a,a' \in A$  such that $\delta^{\leq}_{\mathcal{S}}(a \mu, a' \mu )$ is true in $A'$ there is a morphism $\nu:V^{'}_{\mathcal{S}} \to A'$ such that $u_{\mathcal{S}}\kappa_{\mathcal{S}}\nu = a \mu,\,u^{'}_{\mathcal{S}}\kappa_{\mathcal{S}}\nu = a' \mu $ and $V_{\mathcal{S}} \kappa_{\mathcal{S}} \nu \subseteq A \mu.$

\end{defi}

As in Lemma ~\ref{lem:coflat}, we can show that if  $\mathcal{C}$ be a class of  embeddings of right $S$-posets satisfying Condition (Free)$^{\leq}$, then to show that
a left  $S$-poset $B$ is in $\mathcal{CPF}$, that is, $B$ is $\mathcal{C}$-poflat, it is 
enough to show that for any ordered skeleton $\mathcal{S}$,
if $(v_{\mathcal{S}}\kappa_{\mathcal{S}},b)$ and
$(v'_{\mathcal{S}}\kappa_{\mathcal{S}},b')$ are connected by an
ordered tossing over $V'_{\mathcal{S}}$ and
$B$ with ordered skeleton $\mathcal{S}$, then
$(v_{\mathcal{S}},b)$ and $(v'_{\mathcal{S}},b')$ are connected by 
an ordered tossing over $V_{\mathcal{S}}$ and $B$. Moreover, if $B\in \mathcal{CPF}$,
then $B\in \overline{\mathcal{C}}{\mathcal{PF}}$. 

Everything is then in place to prove the next result.

\begin{thm}\label{thm:poflatwithf} Let $\mathcal{C}$ be a class of embeddings of right $S$-posets satisfying Condition (Free)$^{\leq}$. Then the following conditions are equivalent for a pomonoid $S$:

(i) the class   $\mathcal{CPF}$     is axiomatisable;

(ii)  the class $\mathcal{CPF}$ is closed under formation of ultraproducts;
 
 (iii) for every ordered skeleton $\mathcal{S} $ there exist finitely many replacement ordered skeletons $\mathcal{S}_{1},\cdots,\mathcal{S}_{\alpha(\mathcal{S})}$ such that, for any embedding $\gamma: A \to A'$ in $\mathcal{C}$ and any $\mathcal{C}$-poflat left $S$-poset $B$, if $a \gamma \, \otimes \,b\, \leq \, a' \gamma  \, \otimes \, b' \in A' \otimes B$  by an ordered tossing  $\mathcal{T}$ with $ \mathcal{S}(\mathcal{T})= \mathcal{S}$, then $a \otimes b \leq a' \otimes b'$  by an ordered tossing ${\mathcal{T}}^{'}$ over $A$ and $B$ such that $\mathcal{S}({\mathcal{T}}^{'})= \mathcal{S}_{k}$, for some $ k \in \{1,\cdots,\alpha(\mathcal{S})\}$;
 
 (iv) for every ordered skeleton $\mathcal{S}$ there exists finitely many replacement ordered skeletons $\mathcal{S}_{1},\cdots,\mathcal{S}_{\beta(\mathcal{S})}$ such that, for any $\mathcal{C}$-poflat left $S$-poset $B$, if $(v_{\mathcal{S}} \kappa_{\mathcal{S}},b)$ and $(v'_{\mathcal{S}} \kappa_{\mathcal{S}},b')$ are such that $v_{\mathcal{S}} \kappa_{\mathcal{S}}\, \otimes \, b  \leq v'_{\mathcal{S}} \kappa_{\mathcal{S}}\, \otimes \, b'$  by an ordered tossing $\mathcal{T}$ over $V'_{\mathcal{S}}$ and $B$ with $\mathcal{S}(\mathcal{T}) = \mathcal{S}$, then $v_{\mathcal{S}} \, \otimes \, b \leq v'_{\mathcal{S}}\, \otimes \,b'$ are connected by an ordered tossing ${\mathcal{T}}^{'}$ over $V_{\mathcal{S}}$ and $B$ such that $\mathcal{S}({\mathcal{T}}^{'})= \mathcal{S}_{k}$, for some $ k \in \{1,\cdots,\beta(\mathcal{S})\}.$
\end{thm}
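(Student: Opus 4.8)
The plan is to run the four-step cycle (i)$\Rightarrow$(ii)$\Rightarrow$(iii)$\Rightarrow$(iv)$\Rightarrow$(i) exactly as in the proof of Theorem~\ref{thm:FoutP}, transcribing every double ordered tossing into an ordered tossing, every double ordered skeleton into an ordered skeleton, the formulas $\delta_{\mathcal{S}},\gamma_{\mathcal{S}}$ into their single-sided counterparts $\delta^{\leq}_{\mathcal{S}},\gamma^{\leq}_{\mathcal{S}}$, and the relation $a\otimes b=a'\otimes b'$ into $a\otimes b\leq a'\otimes b'$. Write $\mathbb{OS}$ for the set of all ordered skeletons. The implication (i)$\Rightarrow$(ii) is immediate from \L os's Theorem (Theorem~\ref{thm:los}). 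The implication (iii)$\Rightarrow$(iv) is the specialisation of (iii) to the distinguished embedding $\kappa_{\mathcal{S}}:V_{\mathcal{S}}\to V'_{\mathcal{S}}$ and points $v_{\mathcal{S}},v'_{\mathcal{S}}$ supplied by Condition (Free)$^{\leq}$ (Definition~\ref{defi:orderedfree}).

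For (ii)$\Rightarrow$(iii) I would argue by contradiction, mirroring Theorem~\ref{thm:FoutP}. Supposing (iii) fails for some $\mathcal{S}\in\mathbb{OS}$, for each finite set $f$ of ordered skeletons I produce an embedding $\gamma_f:A_f\to A'_f\in\mathcal{C}$, a $\mathcal{C}$-poflat $B_f$, and elements witnessing $a_f\gamma_f\otimes b_f\leq a'_f\gamma_f\otimes b'_f$ by an ordered tossing of skeleton $\mathcal{S}$, while no ordered tossing over $A_f$ and $B_f$ giving $a_f\otimes b_f\leq a'_f\otimes b'_f$ has skeleton in $f$. Fixing an ultrafilter $\Phi$ refining all the sets $J_{\mathcal{S}}=\{f:\mathcal{S}\in f\}$, and forming $A=\prod A_f$, $A'=\prod A'_f$, $\gamma=\prod\gamma_f$ and $\mathcal{U}=(\prod B_f)/\Phi$, the product ordered tossing gives $\underline{a}\gamma\otimes\underline{b}_{\Phi}\leq\underline{a}'\gamma\otimes\underline{b}'_{\Phi}$ in $A'\otimes\mathcal{U}$ with skeleton $\mathcal{S}$. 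Since $\mathcal{U}\in\mathcal{CPF}$ by (ii), hence $\mathcal{U}\in\overline{\mathcal{C}}\mathcal{PF}$ by the poflat analogue of Lemma~\ref{lem:prodC}, there is an ordered tossing over $A$ and $\mathcal{U}$ of some skeleton $\mathcal{V}$ realising $\underline{a}\otimes\underline{b}_{\Phi}\leq\underline{a}'\otimes\underline{b}'_{\Phi}$. Reading off $\delta^{\leq}_{\mathcal{V}}(\underline{a},\underline{a}')$ in $A$ and $\gamma^{\leq}_{\mathcal{V}}(\underline{b}_{\Phi},\underline{b}'_{\Phi})$ in $\mathcal{U}$, and using closure of $\Phi$ under finite intersection to descend the finitely many inequalities comprising $\gamma^{\leq}_{\mathcal{V}}$, I obtain a $\Phi$-large set $D$ on which the ordered tossing of skeleton $\mathcal{V}$ connects $(a_f,b_f)$ and $(a'_f,b'_f)$. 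Any $f\in D\cap J_{\mathcal{V}}$ then contradicts the defining property of $f$, since $\mathcal{V}\in f$.

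For (iv)$\Rightarrow$(i) I build the axioms as before. Partition $\mathbb{OS}$ into $\mathbb{S}_1$, those $\mathcal{S}$ for which $\gamma^{\leq}_{\mathcal{S}}(b,b')$ holds for no $b,b'$ in any $\mathcal{C}$-poflat $B$, and $\mathbb{S}_2=\mathbb{OS}\setminus\mathbb{S}_1$. For $\mathcal{S}\in\mathbb{S}_1$ take $\psi_{\mathcal{S}}:=(\forall x)(\forall x')\neg\gamma^{\leq}_{\mathcal{S}}(x,x')$, and for $\mathcal{S}\in\mathbb{S}_2$, using the finite replacement list $\mathcal{S}_1,\dots,\mathcal{S}_{\beta(\mathcal{S})}$ furnished by (iv), take
\[\phi_{\mathcal{S}}:=(\forall y)(\forall y')\big(\gamma^{\leq}_{\mathcal{S}}(y,y')\to\gamma^{\leq}_{\mathcal{S}_1}(y,y')\vee\dots\vee\gamma^{\leq}_{\mathcal{S}_{\beta(\mathcal{S})}}(y,y')\big).\]
Then $\Sigma_{\mathcal{CPF}}=\{\psi_{\mathcal{S}}:\mathcal{S}\in\mathbb{S}_1\}\cup\{\phi_{\mathcal{S}}:\mathcal{S}\in\mathbb{S}_2\}$ is the desired set. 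Every $\mathcal{C}$-poflat $B$ models it, by the definitions of $\mathbb{S}_1,\mathbb{S}_2$ and by (iv); conversely a model $C$ is shown to lie in $\mathcal{CPF}$ through the poflat analogue of Lemma~\ref{lem:coflat}, by verifying that whenever $v_{\mathcal{S}}\kappa_{\mathcal{S}}\otimes c\leq v'_{\mathcal{S}}\kappa_{\mathcal{S}}\otimes c'$ holds, the truth of $\phi_{\mathcal{S}}$ forces some $\gamma^{\leq}_{\mathcal{S}_k}(c,c')$, whence (using $\delta^{\leq}_{\mathcal{S}_k}(v_{\mathcal{S}},v'_{\mathcal{S}})$) we get $v_{\mathcal{S}}\otimes c\leq v'_{\mathcal{S}}\otimes c'$ over $V_{\mathcal{S}}$ and $C$.

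The hard part is (ii)$\Rightarrow$(iii): one must confirm that an ordered tossing realising an \emph{inequality} descends coordinatewise through the ultraproduct, which relies on $\gamma^{\leq}_{\mathcal{V}}$ being a finite conjunction of atomic inequalities so that \L os's Theorem and the finite-intersection property of $\Phi$ apply, and on the poflat version of Lemma~\ref{lem:prodC} genuinely producing a single ordered tossing (not a double one) over the product $A$. Everything else is a routine transcription of the flat case, the only substantive change being that single ordered tossings and $\leq$ replace double ordered tossings and $=$.
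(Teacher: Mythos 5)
Your proposal is correct and follows exactly the route the paper intends: the paper omits the proof of Theorem~\ref{thm:poflatwithf}, stating only that it follows the established pattern of Theorem~\ref{thm:FoutP} with ordered tossings, single skeletons and the formulas $\delta^{\leq}_{\mathcal{S}},\gamma^{\leq}_{\mathcal{S}}$ replacing their double counterparts, which is precisely the transcription you carry out, including the appeal to the po-flat analogues of Lemmas~\ref{lem:coflat} and~\ref{lem:prodC} that the paper asserts just before the theorem.
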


To show that the class of all embeddings of right $S$-posets has Condition
(Free)$^{\leq}$, for an ordered skeleton 
\[\mathcal{S}=(s_1,t_1,\hdots, s_m,t_m)\]
we let $F^{m}$ be the free right $S$-poset
 $$ xS \,\dot{\cup} \,x_2 S \,\dot{\cup} \,\hdots  x_m S\,\dot{\cup} \, x' S$$
 and put
 \[T_{\mathcal{S}}= \big \{(xs_1,x_2t_1),(x_2 s_2, x_3 t_2), \hdots,(x_m s_m , x' t_m)\big \}.\]
 Let $=_{\mathcal{S}}$ be $\equiv_{T_{\mathcal{S}}} $, the $S$-poset congruence  which is induced by $T_{\mathcal{S}}$. Abbreviate the order
 $\preceq_{\mathcal{T_{\mathcal{S}}}}$ by
 $\leq_{\mathcal{S}}$ so that $[a]\leq_{\mathcal{S}}[b]$
 for all $(a,b)\in T_{\mathcal{S}}$. We defined an
 {\em ordered standard tossing} from $([x],b)$ to
 $([x'],b')$ where $b,b'\in B$ for a left $S$-poset
 $B$ in the analogous way to a double ordered standard tossing.

The proof of the next lemma follows that of Lemma~\ref{lem:fpflat}.

\begin{lem}\label{lem:fppoflat} The following conditions are  equivalent for a left $S$-poset $B$:

(i) $B$ is po-flat;

(ii) $-\otimes B$ maps the embeddings of $[x]S \cup [x']S$ into $F^{m}/ =_{\mathcal S}$ in the category {\bf Pos-}${\mathbf S}$ to embeddings in the category of ${\bf Pos}$, for every ordered skeleton $\mathcal S$;

(iii)  if the inequality $[x] \otimes b \leq [x'] \otimes b' $ holds by an ordered standard tossing over $F^{m}/ =_{\mathcal S}$ and $B$ with ordered skeleton  $\mathcal S$, then $[x] \otimes b  \leq [x'] \otimes b'$ holds  by an ordered tossing over $[x]S \cup [x']S$ and $B$.
\end{lem}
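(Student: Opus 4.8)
The plan is to follow the pattern of Lemma~\ref{lem:fpflat}, replacing equalities of tensors by inequalities and double ordered tossings by (single) ordered tossings throughout, so that ``one-one'' becomes ``reflects $\leq$'' at every stage. The implications $(i)\Rightarrow(ii)$ and $(ii)\Rightarrow(iii)$ are the routine directions. For $(i)\Rightarrow(ii)$ one simply observes that po-flatness means $-\otimes B$ carries every embedding to an embedding, in particular each inclusion $[x]S\cup[x']S\hookrightarrow F^m/=_{\mathcal S}$. For $(ii)\Rightarrow(iii)$, if $[x]\otimes b\leq[x']\otimes b'$ holds by an ordered standard tossing over $F^m/=_{\mathcal S}$ and $B$, then this inequality holds in $(F^m/=_{\mathcal S})\otimes B$; since $[x],[x']\in[x]S\cup[x']S$ and $-\otimes B$ sends the inclusion to an embedding of posets, the inequality is reflected into $([x]S\cup[x']S)\otimes B$, which by Lemma~\ref{lem:ptossing} is exactly the assertion that the two pairs are joined by an ordered tossing over $[x]S\cup[x']S$ and $B$.

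The substance is $(iii)\Rightarrow(i)$. Here I would first record that po-flatness is a local property: $B$ is po-flat if and only if, for every right $S$-poset $A$, all $a,a'\in A$ and all $b,b'\in B$, the inequality $a\otimes b\leq a'\otimes b'$ in $A\otimes B$ already holds in $(aS\cup a'S)\otimes B$. This reduction comes from restricting an arbitrary embedding to the cyclic subposets $aS\cup a'S$ and using that an embedding reflects $\leq$. So, assuming $(iii)$, let $A$ be a right $S$-poset, let $a,a'\in A$, $b,b'\in B$, and suppose $a\otimes b\leq a'\otimes b'$ in $A\otimes B$ by an ordered tossing with ordered skeleton $\mathcal S=(s_1,t_1,\hdots,s_m,t_m)$. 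By the analogue of Remark~\ref{rem:formulas}, $\delta^{\leq}_{\mathcal S}(a,a')$ is true in $A$ and $\gamma^{\leq}_{\mathcal S}(b,b')$ is true in $B$.

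I would then transfer the situation to the finitely presented poset and back. Since the generating relations $T_{\mathcal S}$ give, in $F^m/=_{\mathcal S}$, precisely the inequalities witnessing $\delta^{\leq}_{\mathcal S}([x],[x'])$, and $\gamma^{\leq}_{\mathcal S}(b,b')$ holds in $B$, the pairs $([x],b)$ and $([x'],b')$ are joined by an ordered standard tossing over $F^m/=_{\mathcal S}$ and $B$ with skeleton $\mathcal S$; that is, $[x]\otimes b\leq[x']\otimes b'$ by a standard tossing. Hypothesis $(iii)$ then yields an ordered tossing over $[x]S\cup[x']S$ and $B$ with some ordered skeleton $\mathcal U$, so that $\delta^{\leq}_{\mathcal U}([x],[x'])$ holds in $[x]S\cup[x']S$ and $\gamma^{\leq}_{\mathcal U}(b,b')$ holds in $B$. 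To return to $A$, use that $\delta^{\leq}_{\mathcal S}(a,a')$ supplies witnesses $a_2,\hdots,a_m\in A$ with $\epsilon_{\mathcal S}(a,a_2,\hdots,a_m,a')$; defining $\phi\colon F^m\to A$ by $x\phi=a$, $x_i\phi=a_i$, $x'\phi=a'$, the relations $u\phi\leq u'\phi$ for $(u,u')\in T_{\mathcal S}$ hold, so $\phi$ factors through an $S$-pomorphism $\overline\phi\colon F^m/=_{\mathcal S}\to A$ with $[x]\overline\phi=a$, $[x']\overline\phi=a'$ and $([x]S\cup[x']S)\overline\phi=aS\cup a'S$. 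Applying the analogue of Remark~\ref{rem:formulas}(ii) to $\overline\phi$ transports $\delta^{\leq}_{\mathcal U}([x],[x'])$ to $\delta^{\leq}_{\mathcal U}(a,a')$ in $aS\cup a'S$; combined with $\gamma^{\leq}_{\mathcal U}(b,b')$ this produces an ordered tossing over $aS\cup a'S$ and $B$, whence $a\otimes b\leq a'\otimes b'$ in $(aS\cup a'S)\otimes B$, as required.

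I expect the only genuinely delicate point, exactly as in Lemma~\ref{lem:fpflat}, to be this passage through $F^m/=_{\mathcal S}$: checking that $\overline\phi$ is a well-defined $S$-pomorphism (which rests on the universal property of the po-congruence $=_{\mathcal S}$ together with the fact that $\epsilon_{\mathcal S}(a,\hdots,a')$ forces $u\phi\leq u'\phi$ on the generators in $T_{\mathcal S}$), and that an order-only formula such as $\delta^{\leq}_{\mathcal U}$ is preserved along $\overline\phi$. Everything else is the po-analogue of the corresponding manipulation in Lemma~\ref{lem:fpflat}, the one new ingredient being that, since we now track the order rather than equality of tensors, each appeal to injectivity is systematically replaced by reflection of $\leq$, which is what turns $(ii)$ and $(iii)$ into statements about embeddings rather than monomorphisms.
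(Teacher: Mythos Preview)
Your proposal is correct and is precisely the approach the paper intends: the paper does not prove Lemma~\ref{lem:fppoflat} explicitly but simply states that its proof follows that of Lemma~\ref{lem:fpflat}, and your write-up carries out exactly that translation, replacing double ordered tossings by ordered tossings and monomorphisms by embeddings. Your reduction of po-flatness to the local statement about $aS\cup a'S$, the passage through $F^m/=_{\mathcal S}$ via the universal property of $=_{\mathcal S}$, and the transport of $\delta^{\leq}_{\mathcal U}$ along $\overline\phi$ all mirror the corresponding steps in Lemma~\ref{lem:fpflat}.
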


As in Lemma~\ref{lem:allfree} we then have:

\begin{lem}\label{lem:allpfree} The class Pos-$S$ of all right $S$-posets
has Condition (Free)$^{\leq}$.
\end{lem}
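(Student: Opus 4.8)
The plan is to mirror the proof of Lemma~\ref{lem:allfree}, but now working with single ordered skeletons and the po-flat (embedding-preserving) setting rather than double ordered skeletons and the flat (monomorphism-preserving) setting. Given an ordered skeleton $\mathcal{S}=(s_1,t_1,\hdots,s_m,t_m)$, I would take $V'_{\mathcal{S}}=F^m/=_{\mathcal{S}}$, set $V_{\mathcal{S}}=[x]S\cup[x']S$, and let $\kappa_{\mathcal{S}}:V_{\mathcal{S}}\to V'_{\mathcal{S}}$ be the inclusion map, with $v_{\mathcal{S}}=[x]$ and $v'_{\mathcal{S}}=[x']$. The first thing to check is that $\delta^{\leq}_{\mathcal{S}}([x]\kappa_{\mathcal{S}},[x']\kappa_{\mathcal{S}})$ holds in $V'_{\mathcal{S}}$; this is immediate from the construction of the congruence $=_{\mathcal{S}}$, since the defining relations of $T_{\mathcal{S}}$ give exactly $[x]s_1\leq_{\mathcal{S}}[x_2]t_1$, $[x_2]s_2\leq_{\mathcal{S}}[x_3]t_2$, and so on up to $[x_m]s_m\leq_{\mathcal{S}}[x']t_m$, which is precisely the statement $\epsilon_{\mathcal{S}}([x],[x_2],\hdots,[x_m],[x'])$, witnessed by the classes $[x_2],\hdots,[x_m]$.

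Next I would verify the universal/lifting part of Condition (Free)$^{\leq}$. Suppose $\mu:A\to A'$ is an arbitrary right $S$-poset embedding and $a,a'\in A$ satisfy $\delta^{\leq}_{\mathcal{S}}(a\mu,a'\mu)$ in $A'$. Unwinding the existential, there are $a_2,\hdots,a_m\in A'$ with $\epsilon_{\mathcal{S}}(a\mu,a_2,\hdots,a_m,a'\mu)$ true. Here I would follow exactly the device in Lemma~\ref{lem:fpflat}: define $\phi:F^m\to A'$ as the $S$-pomorphism sending $x\mapsto a\mu$, $x_i\mapsto a_i$ for $2\leq i\leq m$, and $x'\mapsto a'\mu$. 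Because the inequalities $a\mu\, s_1\leq a_2 t_1$, etc., hold in $A'$, we have $u\phi\leq u'\phi$ for every $(u,u')\in T_{\mathcal{S}}$, so $\phi$ factors through the congruence $=_{\mathcal{S}}$ to give a well-defined $S$-pomorphism $\nu:V'_{\mathcal{S}}=F^m/=_{\mathcal{S}}\to A'$ with $[z]\nu=z\phi$. This $\nu$ satisfies $[x]\kappa_{\mathcal{S}}\nu=a\mu$ and $[x']\kappa_{\mathcal{S}}\nu=a'\mu$, as required.

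It remains to confirm the image condition $V_{\mathcal{S}}\kappa_{\mathcal{S}}\nu\subseteq A\mu$. This is the same straightforward calculation as in Lemma~\ref{lem:allfree}: since $\nu$ is an $S$-pomorphism and $\kappa_{\mathcal{S}}$ is inclusion,
\[
V_{\mathcal{S}}\kappa_{\mathcal{S}}\nu=([x]S\cup[x']S)\nu=[x]\nu\, S\cup[x']\nu\, S=a\mu\, S\cup a'\mu\, S=(aS\cup a'S)\mu\subseteq A\mu,
\]
using that $\mu$ is an $S$-pomorphism. This establishes Condition (Free)$^{\leq}$ with the stated data. I do not anticipate a genuine obstacle here; the whole argument is a faithful transcription of the flat/double-skeleton case to the po-flat/single-skeleton case, and the only point demanding any care is making sure the congruence $=_{\mathcal{S}}$ (built from $T_{\mathcal{S}}$, the \emph{one-sided} relation set) is used consistently in place of $\equiv_{\mathcal{S}}$, so that $\delta^{\leq}_{\mathcal{S}}$ rather than $\delta_{\mathcal{S}}$ is the relevant formula throughout.
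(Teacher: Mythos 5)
Your proof is correct and is essentially the paper's intended argument: the paper omits the proof of this lemma entirely, stating only ``As in Lemma~\ref{lem:allfree}'', and your write-up is a faithful transcription of that proof to the single-skeleton setting, using $F^m/=_{\mathcal{S}}$, the subposet $[x]S\cup[x']S$, the factoring of $\phi$ through $=_{\mathcal{S}}$, and the same image computation.
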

 
We can now deduce the following corollary, which appears without proof
in
\cite{step:2009}. The reader should note that in that article, (weakly)
po-flat $S$-posets are referred to as being (weakly) flat.

\begin{cor}\label{cor:pf}\cite{step:2009}
The following conditions are equivalent for a pomonoid $S$:

(i) the class $\mathcal{PF}$ is axiomatisable;

(ii)the class $\mathcal{PF}$ is closed under formation of ultraproducts;

(iii) for every ordered skeleton $\mathcal{S}$  there exist finitely 
many replacement ordered skeletons $\mathcal{S}_{1}, \hdots,\mathcal{S}_{\alpha(\mathcal{S})} $ such that, for any right $S$-poset $A$ and any poflat left $S$-poset $B$, if $ a \otimes b \leq a' \otimes b' $ exists in $ A \otimes B$ by a ordered tossing $\mathcal{T}$ with ordered skeleton $\mathcal{S}$, then $a \otimes b \leq a' \otimes b' $ also exists in $ (aS \cup a'S) \otimes B$ by a replacement ordered tossing $\mathcal{T}'$ such that $ \mathcal{S }(\mathcal{T}') = \mathcal{S}_{k}$, for some $ k \in \{1, \cdots, \alpha(\mathcal{S})\}.$
\end{cor}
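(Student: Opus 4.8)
The plan is to obtain this corollary as a direct specialisation of Theorem~\ref{thm:poflatwithf} to the class $\mathcal{C}$ of \emph{all} embeddings of right $S$-posets. By definition $\mathcal{PF}=\mathcal{CPF}$ for this choice of $\mathcal{C}$, and by Lemma~\ref{lem:allpfree} this $\mathcal{C}$ satisfies Condition (Free)$^{\leq}$, so Theorem~\ref{thm:poflatwithf} applies. Its conditions (i) and (ii) are verbatim the present (i) and (ii), so the entire task reduces to showing that the present condition (iii) is equivalent to condition (iv) of the theorem; once this is done, the chain of equivalences in the theorem closes the argument.

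First I would record that, for this $\mathcal{C}$, the embedding $\kappa_{\mathcal{S}}$ supplied by Condition (Free)$^{\leq}$ is (following Lemma~\ref{lem:allpfree} and the construction preceding Lemma~\ref{lem:fppoflat}) the inclusion $[x]S\cup [x']S\hookrightarrow F^{m}/=_{\mathcal{S}}$, with $v_{\mathcal{S}}=[x]$ and $v'_{\mathcal{S}}=[x']$. The implication present-(iii)\,$\Rightarrow$\,theorem-(iv) is then immediate: the universal object $F^{m}/=_{\mathcal{S}}$ is itself a right $S$-poset, so applying (iii) to $A=F^{m}/=_{\mathcal{S}}$ with $a=[x]$, $a'=[x']$ yields exactly the replacement asserted in (iv), since $aS\cup a'S=[x]S\cup [x']S$ in this case.

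The substantive direction is theorem-(iv)\,$\Rightarrow$\,present-(iii), and here I would use the universal property of $F^{m}/=_{\mathcal{S}}$ exactly as in the proof of Lemma~\ref{lem:fppoflat}. Given an arbitrary right $S$-poset $A$, elements $a,a'\in A$, a po-flat $B$, and $a\otimes b\leq a'\otimes b'$ via an ordered tossing of skeleton $\mathcal{S}$, we have that $\delta^{\leq}_{\mathcal{S}}(a,a')$ holds in $A$; choosing witnesses $a_2,\hdots,a_m$ for $\epsilon_{\mathcal{S}}$, the assignment $x\mapsto a$, $x_i\mapsto a_i$, $x'\mapsto a'$ induces an $S$-pomorphism $\overline{\phi}:F^{m}/=_{\mathcal{S}}\to A$ with $[x]\overline{\phi}=a$, $[x']\overline{\phi}=a'$ and $([x]S\cup [x']S)\overline{\phi}=aS\cup a'S$. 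Since $\delta^{\leq}_{\mathcal{S}}([x],[x'])$ holds in $F^{m}/=_{\mathcal{S}}$ and $\gamma^{\leq}_{\mathcal{S}}(b,b')$ holds in $B$, we have $[x]\otimes b\leq [x']\otimes b'$ over $F^{m}/=_{\mathcal{S}}$ and $B$ with skeleton $\mathcal{S}$, and (iv) furnishes a replacement skeleton $\mathcal{S}_{k}$ with $\delta^{\leq}_{\mathcal{S}_{k}}([x],[x'])$ true in $[x]S\cup [x']S$ and $\gamma^{\leq}_{\mathcal{S}_{k}}(b,b')$ true in $B$. Transporting along $\overline{\phi}$ via the order-preserving analogue of Remark~\ref{rem:formulas}(ii) gives $\delta^{\leq}_{\mathcal{S}_{k}}(a,a')$ true in $aS\cup a'S$, and combined with $\gamma^{\leq}_{\mathcal{S}_{k}}(b,b')$ this yields $a\otimes b\leq a'\otimes b'$ in $(aS\cup a'S)\otimes B$ by an ordered tossing of skeleton $\mathcal{S}_{k}$, as (iii) demands.

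The only delicate points, which I expect to be where care is needed rather than genuine difficulty, are verifying that $\overline{\phi}$ carries $[x]S\cup [x']S$ onto $aS\cup a'S$ (so that the replacement remains over the correct principal sub-poset) and that every step respects the inequalities rather than equalities throughout, so that the $\delta^{\leq}$/$\gamma^{\leq}$ formulae transfer correctly. Since the ordered finite-presentation machinery of Lemma~\ref{lem:fppoflat} and the order-preserving version of Remark~\ref{rem:formulas} are already established, these are routine and the whole argument runs parallel to the flat case treated in Corollary~\ref{cor:flat}.
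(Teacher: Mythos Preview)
Your proposal is correct and follows exactly the route the paper intends: the paper states this corollary as an immediate consequence of Theorem~\ref{thm:poflatwithf} together with Lemma~\ref{lem:allpfree}, parallel to how Corollary~\ref{cor:flat} is deduced from Theorem~\ref{thm:FoutP} and Lemma~\ref{lem:allfree}, and your argument simply makes explicit the passage between the present condition (iii) and the theorem's conditions via the universal object $F^{m}/=_{\mathcal{S}}$, just as in Lemma~\ref{lem:fppoflat}.
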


We now drop our assumption that Condition (Free)$^{\leq}$  holds. The proof of the next result follows that of Theorem~\ref{thm:sposetsoutFP}.

\begin{thm}\label{thm:sactsoutFP} Let $\mathcal{C}$ be a class of embeddings of
right $S$-posets over a pomonoid $S$. Then
the following conditions are equivalent for a pomonoid S:

(1) the class $\mathcal{CPF}$ is axiomatisable;

(2) the class $\mathcal{CPF}$ is closed under ultraproducts;

(3) for every ordered skeleton $\mathcal{S}$ over $S$ and $a,a' \in A$, where $\mu:A \to A'$ is in $\mathcal{C}$, there exist finitely many ordered skeletons $\mathcal{S}_{1},\cdots,\mathcal{S}_{\alpha(a,\mathcal{S},a')}$, such that for any $\mathcal{C}$-poflat left $S$-act $B$, if $a \mu  \otimes  b \leq a' \mu  \otimes b'$ by an ordered tossing $\mathcal{T}$ over $A'$ and $B$ with $\mathcal{S}(\mathcal{T})= \mathcal{S}$, then $a \otimes b \leq a' \otimes b'$ by an ordered  tossing $\mathcal{T}'$ over $A$ and $B$ such that $\mathcal{S}(\mathcal{T}')=\mathcal{S}_{k}$, for some $ k \in \{1,\cdots,\alpha(a , \mathcal{S},a')\}$.
\end{thm}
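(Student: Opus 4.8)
The plan is to mirror the structure of the proof of Theorem~\ref{thm:sposetsoutFP}, adapting every occurrence of ``double ordered tossing'', ``$\delta_{\mathcal{S}}$'', and ``$\gamma_{\mathcal{S}}$'' to the single-tossing, inequality-flavoured versions $\delta^{\leq}_{\mathcal{S}}$ and $\gamma^{\leq}_{\mathcal{S}}$ appropriate to po-flatness. The implication $(1)\Rightarrow(2)$ is immediate from \L os's Theorem. The substance lies in $(2)\Rightarrow(3)$ and $(3)\Rightarrow(1)$, and in both directions the argument is the exact analogue of the one already given, with ``$a\mu\otimes b=a'\mu\otimes b'$'' replaced throughout by ``$a\mu\otimes b\leq a'\mu\otimes b'$'' and double tossings replaced by (single) ordered tossings.

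For $(2)\Rightarrow(3)$ I would argue by contradiction exactly as before. Assuming $(3)$ fails, I fix an ordered skeleton $\mathcal{S}$, an embedding $\mu:A\to A'\in\mathcal{C}$, and $a,a'\in A$ witnessing the failure, and index over the family $J$ of finite subsets of the set of ordered skeletons. For each $f\in J$ I obtain a $\mathcal{C}$-poflat $B_f$ with $b_f,b_f'\in B_f$ such that $a\mu\otimes b_f\leq a'\mu\otimes b_f'$ via an ordered tossing of skeleton $\mathcal{S}$, yet no ordered replacement tossing over $A$ and $B_f$ realising $a\otimes b_f\leq a'\otimes b_f'$ has skeleton in $f$. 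Forming the product $B=\prod_{f\in J}B_f$ and an ultrafilter $\Phi$ containing each $J_{\mathcal{S}}$, the ``product tossing'' gives $a\mu\otimes\underline{b}\leq a'\mu\otimes\underline{b}'$, which transfers to the ultraproduct $\mathcal{U}=(\prod_f B_f)/\Phi$ as $a\mu\otimes\underline{b}_{\Phi}\leq a'\mu\otimes\underline{b}'_{\Phi}$ by a tossing of skeleton $\mathcal{S}$. By hypothesis $\mathcal{U}$ is $\mathcal{C}$-poflat, so there is a replacement tossing over $A$ and $\mathcal{U}$ with some skeleton $\mathcal{V}$, whence $A\models\delta^{\leq}_{\mathcal{V}}(a,a')$ and $\mathcal{U}\models\gamma^{\leq}_{\mathcal{V}}(\underline{b}_{\Phi},\underline{b}'_{\Phi})$. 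The crucial \L os-type step is to pull $\gamma^{\leq}_{\mathcal{V}}$ back to a set $D\in\Phi$ of coordinates, so that $B_f\models\gamma^{\leq}_{\mathcal{V}}(b_f,b_f')$ for all $f\in D$; then any $f\in D\cap J_{\mathcal{V}}$ yields a replacement tossing over $A$ and $B_f$ of skeleton $\mathcal{V}\in f$, the desired contradiction.

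For $(3)\Rightarrow(1)$ I would build the axiom set explicitly. Let $\mathbb{T}'$ consist of triples-with-embedding $(a,\mathcal{S},a',\mu)$ with $\mu\in\mathcal{C}$ and $\delta^{\leq}_{\mathcal{S}}(a\mu,a'\mu)$ holding; split it into $\mathbb{T}_1$ (those for which $\gamma^{\leq}_{\mathcal{S}}(b,b')$ holds in no $\mathcal{C}$-poflat $B$) and $\mathbb{T}_2$ (the rest). For $T\in\mathbb{T}_1$ take $\psi_T:=(\forall x)(\forall x')\neg\gamma^{\leq}_{\mathcal{S}}(x,x')$; for $T\in\mathbb{T}_2$, using $(3)$ to fix a minimal replacement list $\mathcal{S}_1,\dots,\mathcal{S}_{\alpha(T)}$, take
\[
\phi_T:=(\forall y)(\forall y')\big(\gamma^{\leq}_{\mathcal{S}}(y,y')\to\gamma^{\leq}_{\mathcal{S}_1}(y,y')\vee\cdots\vee\gamma^{\leq}_{\mathcal{S}_{\alpha(T)}}(y,y')\big),
\]
and set $\Sigma_{\mathcal{CPF}}=\{\psi_T:T\in\mathbb{T}_1\}\cup\{\phi_T:T\in\mathbb{T}_2\}$. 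That every $\mathcal{C}$-poflat $B$ models $\Sigma_{\mathcal{CPF}}$ follows directly from the definitions of $\mathbb{T}_1$ and of po-flatness; conversely, if $C\models\Sigma_{\mathcal{CPF}}$ and $a\mu\otimes c\leq a'\mu\otimes c'$ holds with skeleton $\mathcal{S}$, then $(a,\mathcal{S},a',\mu)\in\mathbb{T}'$ cannot lie in $\mathbb{T}_1$ (as $\gamma^{\leq}_{\mathcal{S}}(c,c')$ holds), so $\phi_T$ forces some $\gamma^{\leq}_{\mathcal{S}_k}(c,c')$, and combined with the recorded $\delta^{\leq}_{\mathcal{S}_k}(a,a')$ this yields $a\otimes c\leq a'\otimes c'$ over $A$ and $C$, proving $C\in\mathcal{CPF}$.

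\emph{The main obstacle} I anticipate is purely bookkeeping rather than conceptual: one must be careful that the inequality $a\mu\otimes b\leq a'\mu\otimes b'$ is genuinely witnessed by a \emph{single} ordered tossing (not a double one), so that the correct formulas $\delta^{\leq}_{\mathcal{S}}$, $\gamma^{\leq}_{\mathcal{S}}$ are invoked and Remark~\ref{rem:formulas}'s analogue is applied in the right direction. In particular the transfer of $\gamma^{\leq}_{\mathcal{S}}$-truth under $S$-pomorphisms and the coordinatewise extraction from the ultraproduct must respect the asymmetry between $a$ and $a'$ in an inequality, whereas in the equality-based proof of Theorem~\ref{thm:sposetsoutFP} the two-sided (double) tossing hides that asymmetry. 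Once one checks that the ordered-tossing version of Remark~\ref{rem:formulas} holds, every remaining step is a routine transcription of the cited proof.
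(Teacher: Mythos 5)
Your proposal is correct and is essentially the proof the paper intends: the paper itself gives no separate argument for this theorem, stating only that ``the proof follows that of Theorem~\ref{thm:sposetsoutFP}'', which is exactly the transcription you carry out, with double ordered tossings and $\delta_{\mathcal S},\gamma_{\mathcal S}$ replaced by single ordered tossings and $\delta^{\leq}_{\mathcal S},\gamma^{\leq}_{\mathcal S}$. Your closing remark about verifying the ordered-tossing analogue of Remark~\ref{rem:formulas} matches the paper's own note in Section~4 that those comments carry over.
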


Theorem~\ref{thm:sactsoutFP} can be specialised to the cases where 
$\mathcal{C}$ consists of all inclusions
of (principal) right ideals of $S$ into $S$, 
thus giving necessary and sufficient conditions
on $S$ such that $\mathcal{WPF}$ (a result also found
in \cite{step:2009}) ($\mathcal{PWPF}$) 
is axiomatisable. The statements
of these results are obtained from those of 
Corollaries~\ref{cor:axweaklyflat}
and
\ref{cor:axprweaklyflats}, 
with the word `double' omitted and `flat' replaced by
`poflat'. Further details may be found in \cite{shaheen:2010}.

\section{Axiomatisability of some specific classes of $S$-posets}\label{sec:specific}

We now concentrate on axiomatisability problems for certain
classes  of $S$-posets, in the cases that we can avoid the
`replacement tossings' arguments of the Sections~\ref{sec:tossings}
and \ref{sec:axiopflatsposet}. We consider the classes of $S$-posets satisfying Condition (P) and
(E) (which together give us the class of strongly flat $S$-posets), and the classes of $S$-posets satisfying
Condition
 (EP), (W), (P$_W$), (PWP) and (PWP$_{W}$).

Let $S$ be a pomonoid and let $(s,t)\in S\times S$. We define
\[R^{\leq}(s,t)=\{ (u,v)\in S\times S:su\leq tv\}\mbox{ and }
r^{\leq}(s,t)=\{ u\in S:su\leq tu\}\]
so that $R^{\leq}(s,t)$ is either empty or is an $S$-subposet of
the right $S$-poset
$S\times S$, and $r^{\leq}(s,t)$ is either empty or is a right ideal of $S$.
 Note that in
\cite{step:2009}, $R^{\leq}(s,t)$ and $r^{\leq}(s,t)$ are written as
$R^{<}(s,t)$ and $r^{<}(s,t)$.

\subsection{Conditions (P) and (E) and the class $\mathcal{SF}$}
\label{subsec:pesf}

For completeness we give the following results from 
\cite{step:2009}; they may also be
found in the thesis of the second author \cite{shaheen:2010}. 
The proofs follow closely those of the unordered case in 
\cite{gould},\cite{gould:tartu}
and \cite{gouldpalyutin}.

\begin{thm}\label{pande}\cite{step:2009} Let $S$ be a pomonoid. 

(1) The class of left $S$-posets satisfying Condition (P) is axiomatisable if
and only if for every $s,t \in S$, $R^{\leq}(s,t)$ is empty or is finitely generated.

(2) The class of left $S$-posets satisfying Condition (E) is axiomatisable if
and only if for every $s,t \in S$, $r^{\leq}(s,t)$ is empty or is finitely generated.

(3) The class of $\mathcal{SF}$ of strongly flat left $S$-posets is axiomatisable if
and only if for every $s,t \in S$, $R^{\leq}(s,t)$ is empty or is finitely generated
and $r^{\leq}(s,t)$ is empty or is finitely generated.
\end{thm}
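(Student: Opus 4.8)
The plan is to treat all three parts uniformly through \L os's Theorem (Theorem~\ref{thm:los}): since an axiomatisable class is closed under ultraproducts, for each part it suffices to prove that the stated finite-generation condition is equivalent to closure of the relevant class under ultraproducts. Each ``if'' direction I would handle by writing down an explicit set of first-order axioms, and each ``only if'' direction by the contrapositive via an ultraproduct construction. A convenient feature I would exploit is that a single family of \emph{free} $S$-posets serves as the counterexample for all three parts at once, since by Theorem~\ref{thm:freeproj} and Remark~\ref{rem:table} every free $S$-poset is strongly flat.

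For the ``if'' directions I read finite generation of the subposet $R^{\leq}(s,t)$ (resp. the right ideal $r^{\leq}(s,t)$) as finite generation of the underlying right $S$-act, so that $R^{\leq}(s,s')=\bigcup_{i=1}^{k}(u_i,v_i)S$ when finitely generated. For Condition (P), for each pair $(s,s')$ I would take the sentence
\[(\forall x)(\forall y)\Big(sx\leq s'y\rightarrow \bigvee_{i=1}^{k}(\exists z)(x=u_iz\wedge y=v_iz)\Big)\]
when $R^{\leq}(s,s')$ is generated by $(u_1,v_1),\dots,(u_k,v_k)$, and the sentence $(\forall x)(\forall y)\,\neg(sx\leq s'y)$ when $R^{\leq}(s,s')=\emptyset$. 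A routine check shows a left $S$-poset satisfies all these sentences exactly when it satisfies Condition (P); the only place finiteness is used is that a \emph{finite} disjunction is needed, which is precisely what finite generation supplies. For Condition (E) I would use the analogous single-variable sentences built from generators of $r^{\leq}(s,s')$, and since $\mathcal{SF}=\mathcal{P}\cap\mathcal{E}$, for part (3) I simply take the union of the two axiom sets.

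For the ``only if'' directions I would take the contrapositive. Suppose, say, $R^{\leq}(s,t)$ is non-empty but not finitely generated. Let $J$ be the set of finite subsets of $R^{\leq}(s,t)$, and for $f\in J$ write $\langle f\rangle$ for the subact it generates; non-finite-generation makes $\langle f\rangle$ proper, so I may pick $(w_f,w_f')\in R^{\leq}(s,t)\setminus\langle f\rangle$. Setting $A_f=Sb''$, the free (hence strongly flat) left $S$-poset on one generator, and $p_f=w_fb''$, $p_f'=w_f'b''$, we get $sp_f\leq tp_f'$ because $sw_f\leq tw_f'$. The local lemma is that \emph{no} Condition (P) factorization of $(p_f,p_f')$ in $A_f$ uses a pair in $\langle f\rangle$: by freeness any factorization forces $w_f=wz$, $w_f'=w'z$, so $(w,w')\in\langle f\rangle$ would give $(w_f,w_f')\in\langle f\rangle$, a contradiction. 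Taking an ultrafilter $\Phi$ on $J$ containing every $J_{(w,w')}=\{f:(w,w')\in f\}$ (these have the finite intersection property), the pair $p=(p_f)_\Phi$, $p'=(p_f')_\Phi$ in $A=\prod_{f}A_f/\Phi$ still satisfies $sp\leq tp'$ by \L os. If $A$ satisfied Condition (P) there would be a single $(w,w')\in R^{\leq}(s,t)$ factoring $(p,p')$, valid on some $U\in\Phi$; but $J_{(w,w')}\in\Phi$ and the local lemma forbids $(w,w')$ on that set, so $U\cap J_{(w,w')}=\emptyset$, contradicting $\emptyset\notin\Phi$. The single-element version of this argument with $r^{\leq}(s,t)$ settles Condition (E), and because each $A_f$ is free, hence strongly flat, these same counterexamples show $\mathcal{SF}$ is not closed under ultraproducts whenever either $R^{\leq}(s,t)$ or $r^{\leq}(s,t)$ is non-empty and not finitely generated.

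The step I expect to require the most care is the ``only if'' ultraproduct construction: one must index by finite subsets of $R^{\leq}(s,t)$ rather than by $\mathbb{N}$, so as not to assume countable generation, and the factorization-avoidance lemma for the free poset $Sb''$ is where the argument really lives. By contrast the ordered setting adds little difficulty here, since the factorization clauses of Conditions (P) and (E) are purely equational and so transfer across ultraproducts verbatim; the order enters only through the hypotheses $sx\leq s'y$ and the witnesses $sp_f\leq tp_f'$, which \L os's Theorem handles directly.
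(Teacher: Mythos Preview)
Your proposal is correct and is precisely the standard argument the paper has in mind: the paper does not supply its own proof of this theorem but cites \cite{step:2009} and remarks that the arguments follow closely those of the unordered case in \cite{gould,gould:tartu,gouldpalyutin}. Your construction---explicit axioms from finite generating sets for the ``if'' direction, and an ultraproduct of free $S$-posets indexed by finite subsets of $R^{\leq}(s,t)$ (respectively $r^{\leq}(s,t)$) for the ``only if'' direction---is exactly that standard argument, and indeed matches the template the paper itself spells out in full for the neighbouring results (Theorems~\ref{thm:(EP)} and~\ref{thm:pwconditionsposet}).
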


\subsection{Condition (EP)}
\label{subsec:ep}

We recall from Section~\ref{sec:intro} that, in the terminology introduced
above,  a left $S$-poset $A$ satisfies
Condition (EP) if, given $sa\leq ta$ for any $s,t\in S$ and
$a\in A$, we have that
\[a=ua'=va'\mbox{ for some }(u,v)\in R^{\leq}(s,t)\mbox{ and }a'\in A.\]

\begin{thm}\label{thm:(EP)} 
The following conditions are equivalent for a  pomonoid $S$:

(1) the class $\mathcal{EP}$ is axiomatisable;

(2) the class $\mathcal{EP}$ is closed under ultraproducts;

(3)  for any $s,t \in S$ either $sa \not \leq ta$ for all $ a \in A \in \mathcal{EP}$ or there exists a finite subset $f$ of $ \mathbf{R}^{\leq}(s,t)$, such that 
for any $a \in A \in \mathcal{EP}$
\[ sa  \leq  ta \Rightarrow (a,a)=(u,v)b\mbox{ for some }(u,v) \in f,b \in A.\]
\end{thm}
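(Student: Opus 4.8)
The plan is to prove this theorem following the established pattern of \L os's Theorem giving $(1)\Rightarrow(2)$, an ultraproduct compactness argument for $(2)\Rightarrow(3)$, and a direct axiom-construction for $(3)\Rightarrow(1)$. The implication $(1)\Rightarrow(2)$ is immediate from Theorem~\ref{thm:los}, since an axiomatisable class is closed under ultraproducts.

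For $(2)\Rightarrow(3)$, I would argue by contradiction in the now-familiar style of the proofs of Theorem~\ref{thm:FoutP} and Theorem~\ref{thm:sposetsoutFP}. Fix $s,t\in S$ and suppose there exist members of $\mathcal{EP}$ with $sa\leq ta$ for some $a$, yet no finite $f\subseteq R^{\leq}(s,t)$ witnesses the implication in $(3)$. Let $J$ be the set of finite subsets of $R^{\leq}(s,t)$; for each $f\in J$ choose $A_f\in\mathcal{EP}$ and $a_f\in A_f$ with $sa_f\leq ta_f$ but such that $(a_f,a_f)\neq(u,v)b$ for any $(u,v)\in f$, $b\in A_f$. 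Taking an ultrafilter $\Phi$ on $J$ containing every $J_{(u,v)}=\{f: (u,v)\in f\}$ (these have the finite intersection property), I would form the ultraproduct $A=\bigl(\prod_{f}A_f\bigr)/\Phi$. The element $\underline{a}_\Phi=(a_f)_\Phi$ satisfies $s\underline{a}_\Phi\leq t\underline{a}_\Phi$ by \L os's Theorem. By $(2)$, $A\in\mathcal{EP}$, so there exist $(u,v)\in R^{\leq}(s,t)$ and $b_\Phi\in A$ with $\underline{a}_\Phi=ub_\Phi=vb_\Phi$. This is a \emph{single} first-order condition, so by \L os it holds on a set $D\in\Phi$; intersecting with $J_{(u,v)}\in\Phi$ yields some $f\ni(u,v)$ with $a_f=ub_f=vb_f$, contradicting the choice of $a_f$.

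For $(3)\Rightarrow(1)$ I would build an explicit axiom set over $\Pi_S$. For each pair $(s,t)$, if $sa\not\leq ta$ for all $a$ in any $\mathcal{EP}$-poset, include the sentence $(\forall x)\neg(sx\leq tx)$. Otherwise let $f=\{(u_1,v_1),\dots,(u_k,v_k)\}$ be the finite subset supplied by $(3)$, and include
\[
(\forall x)\Big(sx\leq tx\rightarrow \bigvee_{i=1}^{k}(\exists y)\big(x=u_iy\wedge x=v_iy\big)\Big).
\]
One then checks that any $A\in\mathcal{EP}$ models this (using that each $(u_i,v_i)\in R^{\leq}(s,t)$ so the existential witness automatically lies in the allowed set, together with the implication in $(3)$), and conversely that any model $A$ of the whole set satisfies Condition (EP): given $sa\leq ta$, the antecedent fires, producing $y\in A$ and some $i$ with $a=u_iy=v_iy$ where $su_i\leq tv_i$, which is exactly Condition (EP).

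The main obstacle I anticipate is the bookkeeping in $(2)\Rightarrow(3)$, specifically verifying that the witnessing pair $(u,v)$ extracted from the ultraproduct genuinely lies in $R^{\leq}(s,t)$ and that the equality $a_f=ub_f=vb_f$ transfers down to a \emph{single} index $f$ that also contains $(u,v)$. The delicate point is that $\mathcal{EP}$ only guarantees \emph{some} pair with $su\leq tv$, and one must ensure this pair is fixed before pulling back along $\Phi$ — this is precisely why the intersection $D\cap J_{(u,v)}$ is taken after the pair is known, and where the finite-intersection construction of $\Phi$ is essential. The remaining verifications are routine first-order translations, analogous to the strongly flat case in Theorem~\ref{pande}.
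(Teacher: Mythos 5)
Your proposal is correct and follows essentially the same route as the paper: the implication $(1)\Rightarrow(2)$ via \L os's Theorem, the contradiction argument for $(2)\Rightarrow(3)$ using an ultrafilter on the finite subsets of $R^{\leq}(s,t)$ containing each $J_{(u,v)}$ and then intersecting the agreement set with $J_{(u,v)}$ after the witnessing pair is fixed, and the explicit axioms $\phi_\rho$ for $(3)\Rightarrow(1)$ are all exactly as in the paper's proof.
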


\begin{proof}

$(1) \Rightarrow (2)$ This follows from \L os's Theorem.

$(2) \Rightarrow (3)$ Suppose $sa  \leq  ta$ for some $a \in A \in \mathcal{EP}$ and for each finite subset $f $ of $ \mathbf{R}^{\leq}(s,t)$, there exists $A_{f} \in \mathcal{EP^{\leq}}$, $a_{f} \in A_{f}$ with  $s a_{f} \leq  t a_{f}$ and $(a_{f},a_{f}) \not \in \,f A_{f}.$

Let $J$ be the set of finite subsets of $\mathbf{R}^{\leq}(s,t)$.  For each $(u,v) \in \mathbf{R}^{\leq}(s,t)$ we define $$J_{(u,v)}= \{f \in J:(u,v) \in f\}$$

As  each intersection of finitely many of the sets $J_{(u,v)}$ is non-empty, we are able to define an ultrafilter $\Phi$ on $J$,  such that each $J_{(u,v)} \in \Phi$ for all $(u,v) \in \mathbf{R}^{\leq}(s,t).$

Now $s (a_f) \leq  t (a_f)$ in $A$ where $A= \prod_{f \in J}A_{f}$, and it follows that  the inequality $s (a_f)_{\Phi} \leq  t (a_f){\Phi}$  holds in $\mathcal{U}$ where  $\mathcal{U} = \prod_{f \in J} A_{f} /\Phi$. By assumption  $\mathcal{U}$ lies in $\mathcal{EP}$,  so there exists $(u , v) \in R^{\leq}(s,t)$, and $r_{f} \in A_f$ such that  $$(a_f)_{\Phi}  = u (r_f)_{\Phi} = v  ({r_{f}})_{\Phi}.$$

As $\Phi$ is closed under finite intersections, there must exists $T \in \Phi$ such that 
 $a_{f} = u r_{f} = v r_{f}$ for all $ f \in T$.

Now suppose that  $f \in T \cap J_{(u,v)}$, then $(u,v) \in f $ and $$(a_{f},a_{f}) = (u,v) r_{f} \, \in  f A_{f}$$  a contradiction to our assumption, hence $(2)\Rightarrow (3)$.

$(3) \Rightarrow (1) $ Given that $(3)$ holds, we give an explicit set of sentences that
axiomatises $\mathcal{EP}$.

For any element $\rho =(s,t)\in S \times S$ with $sa  \leq  ta$, for some $a \in A$ where $A \in \mathcal{EP}$, we choose and fix a finite set of
elements $\{(u_{\rho  1},v_{\rho 1}) \cdots (u_{\rho  n(\rho)},v_{\rho  n(\rho)}) \} $ of 
${\mathbf{R}^{\leq}(\rho)}$ as guaranteed by $(3)$. We define sentences $\phi_\rho$ of $L_S$ as follows:

If $sa \not \not\leq ta$ for all $ a \in A \in \mathcal{EP}$, let

\begin{equation*}
\phi_{\rho}=(\forall x)(sx \not  \leq  tx);
\end{equation*}

otherwise,   
\begin{equation*}
\phi_\rho=(\forall x)\big(sx  \leq tx \rightarrow (\exists z)(\bigvee
^{n(\rho)}_{i=1}(x=u_{\rho i}z\, = v_{\rho i}z ))\big) .
\end{equation*}

\vspace{2mm}

Let 
\begin{equation*}
\sum_{\mathcal{EP}} = \big\{\phi_{\rho}:\rho \in S \times S \big\}
\end{equation*}

We claim that $\sum_{\mathcal{EP}}$ axiomatises the class $\mathcal{EP}$.

Suppose that $A\in \mathcal{EP}$ and $\rho=(s,t) \in S
\times S$. If   $sb \not \leq  tb $, for all $b \in B \in \mathcal{EP}$, then certainly this is true for $A$, so that $A \models \phi_{\rho}.$

Suppose on the other hand that $sb \leq  tb $, for some $b \in B \in \mathcal{EP}$;
then 
$$\phi_{\rho}=(\forall x)\big(sx  \leq tx \rightarrow (\exists z)(\bigvee
^{n(\rho)}_{i=1}(x=u_{\rho i}z\, = v_{\rho i}z ))\big).$$

Suppose  $sa \leq  ta $ where $a
\in A $. As $A\in \mathcal{EP}$, $(3)$ tells us that  there is
an element $b\in A$ and $(u_{\rho i},v_{\rho i})$ for
some $i\in \{ 1,\hdots ,n(\rho)\}$ with 
$a=u_{\rho i}b=v_{\rho i}b$.  Hence $A \models \phi_{\rho}$.

Conversely suppose that $A$ is a model of $\sum_{\mathcal{EP}}$ and $s a  \leq  ta$ where $s,t \in S$ and $a \in
A$.  We cannot have that  $\phi_{\rho}$ is  $(\forall x) (sx \not  \leq tx)$. It follows that for some $ b \in B \in \mathcal{EP}$ we have $ sb  \leq  tb$, $ f =\{(u_{\rho\,1}, v_{\rho\,1}),\cdots, (u_{\rho\,n(\rho)},v_{\rho\,n(\rho)})\}$ exists as in $(3)$ and $\phi_\rho$ is 
\begin{equation*}
(\forall x) \big(sx \, \leq  \,tx \,\rightarrow \,(\exists \,z) (
\bigvee ^{n(\rho)}_{i=1}(\,x\,=\,u_{\rho i}z \,=  \,v_{\rho i}
\,z))\big).
\end{equation*}
Hence there exists an element $c \in A$ with $a=u_{\rho i}  c = v_{\rho
i}  c$ for some $i \in \{1,2, \hdots, n(\rho) \} $.  By definition of $
u_{\rho i}, v_{\rho i}$ we have $su_{\rho i}\,  \leq  t v_{\rho i } $. 
Thus $A$ satisfies Condition (EP) and so $\sum_{\mathcal{EP}}$ axiomatises $\mathcal{EP}$.
\end{proof}

\subsection{Axiomatisability of Condition (PWP)}\label{subsec:pwp}

We solve the axiomatisability problem for $\mathcal{PWP}$
 by  following similar lines to those  for $\mathcal{EP}$.

\begin{thm}\label{thm:(pwp)} 
The following conditions are equivalent for a  pomonoid $S$:

(1) the class $\mathcal{PWP}$ is axiomatisable;

(2) the class $\mathcal{PWP}$ is closed under ultraproducts;

(3)  for any $s \in S$ either $sa \not \leq sa'$ for all $ a,a' \in A \in \mathcal{PWP}$ or there exists a finite subset $f$ of $ \mathbf{R}^{\leq}(s,s)$, such that 
for any $a,a' \in A \in \mathcal{PWP}$
\[ sa  \leq  sa' \Rightarrow (a,a')=(u,v)b\mbox{ for some }(u,v) \in f,b \in A.\]

\end{thm}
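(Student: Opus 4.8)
The plan is to mirror, almost verbatim, the proof of Theorem~\ref{thm:(EP)}, adjusting for the fact that Condition (PWP) involves a single scalar $s$ acting on a \emph{pair} $(a,a')$ rather than a pair $(s,t)$ acting on a single element. The key observation to record first is that $A$ satisfies Condition (PWP) exactly when, for all $s\in S$ and $a,a'\in A$ with $sa\leq sa'$, there exist $(u,v)\in\mathbf{R}^{\leq}(s,s)$ (that is, $su\leq sv$) and $b\in A$ with $a=ub$ and $a'=vb$, i.e. $(a,a')=(u,v)b$. With this reformulation the three conditions are phrased in a common language. The implication $(1)\Rightarrow(2)$ is immediate from \L os's Theorem.

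For $(2)\Rightarrow(3)$ I would argue by contradiction exactly as in Theorem~\ref{thm:(EP)}. Suppose (3) fails for some $s\in S$: then $sa\leq sa'$ holds for some $a,a'$ in some member of $\mathcal{PWP}$, yet for every finite subset $f$ of $\mathbf{R}^{\leq}(s,s)$ there is $A_f\in\mathcal{PWP}$ and $a_f,a_f'\in A_f$ with $sa_f\leq sa_f'$ but $(a_f,a_f')\notin fA_f$. Let $J$ be the family of finite subsets of $\mathbf{R}^{\leq}(s,s)$ and put $J_{(u,v)}=\{f\in J:(u,v)\in f\}$; these sets have the finite intersection property, so lie in an ultrafilter $\Phi$ on $J$. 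In $\mathcal{U}=\prod_{f\in J}A_f/\Phi$ we have $s(a_f)_\Phi\leq s(a_f')_\Phi$, and since $\mathcal{U}\in\mathcal{PWP}$ by hypothesis there are $(u,v)\in\mathbf{R}^{\leq}(s,s)$ and $(r_f)_\Phi\in\mathcal{U}$ with $(a_f)_\Phi=u(r_f)_\Phi$ and $(a_f')_\Phi=v(r_f)_\Phi$. Closure of $\Phi$ under finite intersection gives $T\in\Phi$ with $a_f=ur_f$ and $a_f'=vr_f$ for all $f\in T$; then any $f\in T\cap J_{(u,v)}$ yields $(a_f,a_f')=(u,v)r_f\in fA_f$, the required contradiction.

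For $(3)\Rightarrow(1)$ I would write down an explicit axiom $\phi_\rho$ for each $\rho=s\in S$. If $sa\not\leq sa'$ throughout $\mathcal{PWP}$, set $\phi_\rho=(\forall x)(\forall x')(sx\not\leq sx')$; otherwise fix, using (3), a finite set $\{(u_{\rho 1},v_{\rho 1}),\dots,(u_{\rho n(\rho)},v_{\rho n(\rho)})\}\subseteq\mathbf{R}^{\leq}(s,s)$ and set $\phi_\rho=(\forall x)(\forall x')\big(sx\leq sx'\rightarrow(\exists z)\bigvee_{i=1}^{n(\rho)}(x=u_{\rho i}z\wedge x'=v_{\rho i}z)\big)$. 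One then verifies that $\Sigma_{\mathcal{PWP}}=\{\phi_\rho:\rho\in S\}$ axiomatises $\mathcal{PWP}$: every $A\in\mathcal{PWP}$ models each $\phi_\rho$ directly from the choice of the finite sets in (3), while conversely any model of $\Sigma_{\mathcal{PWP}}$ satisfies Condition (PWP), since membership $(u_{\rho i},v_{\rho i})\in\mathbf{R}^{\leq}(s,s)$ forces $su_{\rho i}\leq sv_{\rho i}$, which is precisely the inequality demanded by (PWP).

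I do not anticipate a genuine obstacle: the argument is a direct transcription of Theorem~\ref{thm:(EP)}. The one point requiring care is the bookkeeping difference between the two conditions, namely that the role played there by the pair $(s,t)$ and the single element $a$ is played here by the single element $s$ and the pair $(a,a')$, so that the relevant right $S$-subposet of $S\times S$ is $\mathbf{R}^{\leq}(s,s)$ and the diagonal action $(u,v)b=(ub,vb)$ must be used consistently throughout.
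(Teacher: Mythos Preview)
Your proposal is correct and follows precisely the route the paper itself indicates: the paper states only that the result is obtained ``by following similar lines to those for $\mathcal{EP}$'' and gives no separate proof, so your transcription of the Theorem~\ref{thm:(EP)} argument with the bookkeeping change $(s,t;a)\mapsto(s;a,a')$ and $\mathbf{R}^{\leq}(s,t)\mapsto\mathbf{R}^{\leq}(s,s)$ is exactly what is intended.
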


\subsection{Axiomatisability of Condition (P$_w$) }\label{subsecP_W}

We recall that a left $S$-poset $A$ satisfies Condition (P$_w$) if for any $a,a'\in A$
and $s,s'\in S$, 
if $sa\leq s'a'$, then there exists $a''\in A$ and $u,u'\in S$ with
$(u,u')\in R^{\leq}(s,s')$, $a\leq ua''$ and $u'a''\leq a'$.

\begin{thm}\label{thm:pwconditionsposet} The
following conditions are equivalent for a pomonoid $S$:

(1) the class $\mathcal{P}_W$ is axiomatisable;

(2) the class $\mathcal{P}_W$ is closed under
 ultraproduct; 

(3) every ultrapower of $S$ satisfies Condition (P$_w$);

(4) for any $\rho=(s, t) \in S\times S$, either
$R^{\leq}(s,t)=\emptyset$ or there exists finitely many 
\[(u_{\rho 1},v_{\rho 1}),\hdots, (u_{\rho n(\rho)},v_{\rho n(\rho)})\in R^{\leq}(s,t)\]
such that for any $(x,y)\in R^{\leq}(s,t)$,
\[x\leq u_{\rho i}h\mbox{ and }v_{\rho i}h\leq y\]
for some $i\in \{ 1,\hdots n(\rho)\}$ and $h\in S$.

\end{thm}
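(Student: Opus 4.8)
The plan is to establish the cycle $(1)\Rightarrow(2)\Rightarrow(3)\Rightarrow(4)\Rightarrow(1)$, following closely the template of Theorem~\ref{thm:(EP)}. The implication $(1)\Rightarrow(2)$ is immediate from \L os's Theorem (Theorem~\ref{thm:los}). For $(2)\Rightarrow(3)$ I would first note that $_SS$ is free, hence satisfies Condition (P) and so, by Remark~\ref{rem:table}, Condition (P$_w$); thus $S\in\mathcal{P}_W$. Since any ultrapower of $S$ is the ultraproduct of copies of the member $_SS$ of $\mathcal{P}_W$, closure under ultraproducts forces every such ultrapower to satisfy Condition (P$_w$). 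Note that, because $(3)$ speaks only of ultrapowers of $S$, the awkward bookkeeping over varying factor $S$-posets that appears in the general theorems is avoided.

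The substantive step is $(3)\Rightarrow(4)$, and I expect this to be the main obstacle. I would argue by contradiction: suppose $(4)$ fails, so there is $\rho=(s,t)$ with $R^{\leq}(s,t)\neq\emptyset$ but no finite subset witnesses the required covering. Let $J$ be the set of finite subsets of $R^{\leq}(s,t)$; for each $f\in J$ choose a defeating pair $(x_f,y_f)\in R^{\leq}(s,t)$ for which there is no $(u,v)\in f$ and no $h\in S$ with $x_f\leq uh$ and $vh\leq y_f$. The sets $J_{(u,v)}=\{f\in J:(u,v)\in f\}$ have the finite intersection property, so they extend to an ultrafilter $\Phi$ on $J$. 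In the ultrapower $\mathcal{U}=S^J/\Phi$ put $\bar{x}=(x_f)_\Phi$ and $\bar{y}=(y_f)_\Phi$; since $sx_f\leq ty_f$ for all $f$ we get $s\bar{x}\leq t\bar{y}$. By $(3)$, $\mathcal{U}$ satisfies Condition (P$_w$), yielding $u,u'\in S$ with $(u,u')\in R^{\leq}(s,t)$ and some $\bar{h}=(h_f)_\Phi$ such that $\bar{x}\leq u\bar{h}$ and $u'\bar{h}\leq\bar{y}$. Each inequality holds on a $\Phi$-large set, and intersecting with $J_{(u,u')}$ produces an $f$ with $(u,u')\in f$, $x_f\leq uh_f$ and $u'h_f\leq y_f$, contradicting the choice of $(x_f,y_f)$. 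The care needed here is purely in tracking which sets lie in $\Phi$ and in observing that the elements $u,u'$ delivered by Condition (P$_w$) genuinely lie in $S$, not merely in $\mathcal{U}$.

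Finally, for $(4)\Rightarrow(1)$ I would write down an explicit axiom $\phi_\rho$ for each $\rho=(s,t)\in S\times S$. When $R^{\leq}(s,t)=\emptyset$ I take $\phi_\rho=(\forall x)(\forall y)\neg(sx\leq ty)$; this holds in every member of $\mathcal{P}_W$, since $sa\leq ta'$ together with Condition (P$_w$) would force $R^{\leq}(s,t)\neq\emptyset$. When $R^{\leq}(s,t)\neq\emptyset$, fix the finitely many pairs $(u_{\rho 1},v_{\rho 1}),\dots,(u_{\rho n(\rho)},v_{\rho n(\rho)})$ guaranteed by $(4)$ and set
\[\phi_\rho=(\forall x)(\forall y)\Big(sx\leq ty\rightarrow(\exists z)\bigvee_{i=1}^{n(\rho)}\big(x\leq u_{\rho i}z\wedge v_{\rho i}z\leq y\big)\Big),\]
letting $\Sigma_{\mathcal{P}_W}=\{\phi_\rho:\rho\in S\times S\}$. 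To see that every $A\in\mathcal{P}_W$ models $\phi_\rho$, I apply Condition (P$_w$) to $sa\leq ta'$ to obtain $(u,u')\in R^{\leq}(s,t)$ and $a''$ with $a\leq ua''$, $u'a''\leq a'$, then use $(4)$ on the pair $(u,u')$ to find $i$ and $h$ with $u\leq u_{\rho i}h$ and $v_{\rho i}h\leq u'$; monotonicity of the action (via the sentences $\psi_{u,v}$ of $\Pi_S$) gives $a\leq u_{\rho i}(ha'')$ and $v_{\rho i}(ha'')\leq a'$, so $z=ha''$ witnesses the disjunction. Conversely, a model $A$ of $\Sigma_{\mathcal{P}_W}$ satisfying $sa\leq ta'$ cannot satisfy the empty-case axiom, so $\phi_\rho$ is the disjunctive sentence and supplies $z=a''$ and an index $i$ with $(u_{\rho i},v_{\rho i})\in R^{\leq}(s,t)$, $a\leq u_{\rho i}a''$ and $v_{\rho i}a''\leq a'$, which is precisely Condition (P$_w$). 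Hence $\Sigma_{\mathcal{P}_W}$ axiomatises $\mathcal{P}_W$, closing the cycle.
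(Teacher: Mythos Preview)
Your proof is correct and the implications $(1)\Rightarrow(2)\Rightarrow(3)$ and $(4)\Rightarrow(1)$ match the paper's (the latter two are left implicit there, and your axioms $\phi_\rho$ are exactly the paper's $\Omega_\rho$). The only genuine difference is in $(3)\Rightarrow(4)$. You index by the set $J$ of finite subsets of $R^{\leq}(s,t)$, choose for each $f\in J$ a defeating pair $(x_f,y_f)$, and take an ultrafilter on $J$ containing the sets $J_{(u,v)}$ --- precisely the template of Theorem~\ref{thm:(EP)}. The paper instead selects a covering family $\{(u_\beta,v_\beta):\beta<\gamma\}\subseteq R^{\leq}(s,t)$ of \emph{minimal} infinite cardinality $\gamma$, uses minimality to ensure that for $\alpha<\beta$ one never has both $u_\beta\leq u_\alpha h$ and $v_\alpha h\leq v_\beta$, then takes a \emph{uniform} ultrafilter on $\gamma$ and applies (P$_w$) to the elements $(u_\beta)_\Phi$, $(v_\beta)_\Phi$; the contradiction comes from composing the witness from (P$_w$) with the covering property to violate the minimality clause. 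Your argument is more elementary (no cardinal minimality, no uniform ultrafilters) and keeps the section's proofs uniform in style; the paper's version is slightly more structural, working directly with the ``domination'' preorder on $R^{\leq}(s,t)$, but neither approach offers an advantage in generality here.
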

\begin{proof}
$(3) \Rightarrow (4)$ Suppose that every ultrapower of $S$ has (P$_w$) but that
$(4)$ does not hold. Then there exists $\rho=(s,t)\in R^{\leq}(s,t)$ with
$R^{\leq}(s,t)\neq\emptyset$ but such that no finite subset of
$R^{\leq}(s,t)$ exists as in $(4)$.

Let 
$\{(u_\beta, v_\beta): \beta < \gamma \}$ be a
set of minimal (infinite) cardinality $\gamma$ contained
in $R^{\leq}(s,t)$ such that if $(x,y)\in R^{\leq}(s,t)$, then
\[x\leq u_{\beta}h\mbox{ and }v_{\beta}h\leq y\]
for some $\beta<\gamma$ and $h\in S$. From the minimality of
$\gamma$ we may assume that for any $\alpha<\beta<\gamma$, it is not true that both
\[u_{\beta}\leq u_{\alpha}h\mbox{ and }v_{\alpha}h\leq v_{\beta}\]
for any $h\in S$.

Let $\Phi$ be a uniform ultrafilter on $\gamma$, that is $\Phi$ is an ultrafilter on $\gamma$ such that all sets in $\Phi$ have cardinality $\gamma$.
Let $\mathcal{U}= S ^{\gamma}/ \Phi$, by assumption $\mathcal{U}$ satisfies Condition (P$_w$).

Since $su_{\beta}\leq tv_{\beta}$ for all $\beta<\gamma$, $s(u_{\beta})_{\Phi}
\leq t(v_{\beta})_{\Phi}$. As $\mathcal{U}$ satisfies condition (P$_w$), 
there exists $(u,v)\in R^{\leq}(s,t)$ and
$(w_{\beta})_{\Phi}\in \mathcal{U}$ such that
\[(u_{\beta})_{\Phi}\leq u(w_{\beta})_{\Phi}
\mbox{ and }v(w_{\beta})_{\Phi}\leq (v_{\beta})_{\Phi}.\]
Let $D\in\Phi$ be such that 
\[u_{\beta}\leq uw_{\beta}\mbox{ and }vw_{\beta}\leq v_{\beta}\]
for all $\beta\in D$. Now
$(u,v)\in R^{\leq}(s,t)$ so that
\[u\leq u_{\sigma}h\mbox{ and }v_{\sigma}h\leq v\]
for some $\sigma<\gamma$.  Choose $\beta\in D$ with $\beta>\sigma$. Then
\[u_{\beta}\leq uw_{\beta}\leq u_{\sigma}hw_{\beta}\mbox{ and }
v_{\sigma}hw_{\beta}\leq vw_{\beta}\leq v_{\beta},\]
a contradiction. Thus $(4)$ holds.

$(4) \Rightarrow (1)$ Suppose that $(4)$ holds.

Let $\rho =(s,t) \in S \times S$. If
$R^{\leq}(s,t)=\emptyset$ we put 
\[\Omega_{\rho}:=(\forall x)(\forall y)(sx \not \leq ty).\]
If $R^{\leq}(s,t)\neq\emptyset$, let
\[(u_{\rho 1},v_{\rho 1}),\hdots, (u_{\rho n(\rho)},v_{\rho n(\rho)})\in R^{\leq}(s,t)\]
be the finite set given by our hypothesis, and put 
$$\Omega_{\rho}:=( \forall x)(\forall y)\big( sx \leq ty \rightarrow (\exists z)( \bigvee ^{n(\rho)}_{i=1} ( x \leq  u_{\rho,i} \,z \wedge   v_{\rho,i}\, z \leq   y ))\big.$$

Let \[\sum_{\mathcal{P}_W}=\{ \Omega_{\rho}: \rho\in S\times S\}.\]

We claim that $ \sum _{\mathcal{P}_W}$ axiomatises 
$\mathcal{P}_W$.

Let $A\in\mathcal{P}_w$ and let
$\rho=(s,t)\in S\times S$. Suppose first that 
$R^{\leq}(s,t)=\emptyset$. If $sa\leq tb$ for some $a,b\in S$, then as
$A$ satisfies (P$_w$) we have, in particular, that
$R^{\leq}(s,t)\neq\emptyset$, a contradiction. Hence  
$A\models\Omega_{\rho}$.

On the other hand, if $R^{\leq}(s,t)\neq\emptyset$, then
\[\Omega_{\rho}=  (\forall x)(\forall y)\big( sx \leq ty \rightarrow (\exists z)( \bigvee ^{n(\rho)}_{i=1} ( x \leq  u_{\rho,i} \,z \wedge   v_{\rho,i}\, z \leq   y ))\big.\]
If $sa\leq tb$ where $ a,b \in A$, then there exists $(u,v)\in R^{\leq}(s,t)$ and
$c\in A$ with
\[a\leq uc\mbox{ and }vc\leq b.\]
By hypothesis we have that
\[u\leq u_{\rho i}h\mbox{ and }v_{\rho i}h\leq v\]
for some $h\in S$ and $i\in \{ 1,\hdots ,n(\rho)\}$. Now
\[a\leq uc\leq u_{\rho i}hc\mbox{ and } v_{\rho i}hc\leq vc\le b\]
so that (with $z=hc$), $A\models\Omega_{\rho}$. Hence $A \models 
\sum_{\mathcal{P}_W}$.

Conversely, suppose that  $A\models \sum_{\mathcal{P}_W}$ and $sa\leq tb$ for some
$\rho=(s,t)\in S\times S$ and $a,b\in S$. We must therefore have that
$R^{\leq}(s,t)\neq \emptyset$ and consequently, $\Omega_{\rho}$ is 
\[(\forall x)(\forall y)\big( sx \leq ty \rightarrow (\exists z)( \bigvee ^{n(\rho)}_{i=1} ( x \leq  u_{\rho,i} \,z \wedge   v_{\rho,i}\, z \leq   y ))\big.\]
Hence $a\leq u_{\rho i}c$ and $v_{\rho i}c\leq b$ for some $c\in A$. By definition,
$(u_{\rho i},v_{\rho i})\in R^{\leq}(s,t)$, so that $A$ lies in 
$\mathcal{P}_W$. 
\end{proof}

\subsection{Axiomatisability of Condition (PWP$_{w}$)}\label{subsec:pwpw}

We solve the axiomatisability problem for Condition (PWP$_{w}$) by following  similar lines 
to those  for Condition (P$_w$).   Of course in this case $R^{\leq}(s,s)\neq \emptyset$ for any $s\in S$ and so our result is as follows.

\begin{thm}\label{thm:pwpw} The
following conditions are equivalent for a pomonoid $S$:

(1) the class $\mathcal{PWP}_w$ is axiomatisable;

(2) the class $\mathcal{PWP}_w$ is closed under
 ultraproduct; 

(3) every ultrapower of $S$ satisfies Condition (PWP$_w$);

(4) for any $s\in S$ there exists finitely many 
\[(u_{\rho 1},v_{\rho 1}),\hdots, (u_{\rho n(\rho)},v_{\rho n(\rho)})\in R^{\leq}(s,s)\]
such that for any $(x,y)\in R^{\leq}(s,s)$,
\[x\leq u_{\rho i}h\mbox{ and }v_{\rho i}h\leq y\]
for some $i\in \{ 1,\hdots n(\rho)\}$ and $h\in S$.

\end{thm}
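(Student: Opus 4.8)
The plan is to prove Theorem~\ref{thm:pwpw} by establishing the cycle of implications $(1)\Rightarrow(2)\Rightarrow(3)\Rightarrow(4)\Rightarrow(1)$, exactly mirroring the structure already carried out for Condition (P$_w$) in Theorem~\ref{thm:pwconditionsposet}. The implication $(1)\Rightarrow(2)$ is immediate from \L os's Theorem, and $(2)\Rightarrow(3)$ is trivial once we observe that $S$ itself satisfies (PWP$_w$) (take $u=u'=1$, $b''$ the relevant element), so that every ultrapower of $S$, being an ultraproduct of copies of a member of $\mathcal{PWP}_w$, lies in $\mathcal{PWP}_w$ by the closure hypothesis.

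For $(3)\Rightarrow(4)$ I would argue by contradiction along the lines of the corresponding step in Theorem~\ref{thm:pwconditionsposet}, but working with $R^{\leq}(s,s)$ in place of $R^{\leq}(s,t)$. Fixing $s$ for which no finite subset works, I would choose a set $\{(u_\beta,v_\beta):\beta<\gamma\}\subseteq R^{\leq}(s,s)$ of minimal infinite cardinality $\gamma$ with the interpolation property, and by minimality arrange that for $\alpha<\beta$ it is never the case that both $u_\beta\leq u_\alpha h$ and $v_\alpha h\leq v_\beta$. Taking a uniform ultrafilter $\Phi$ on $\gamma$ and setting $\mathcal{U}=S^\gamma/\Phi$, the inequalities $su_\beta\leq sv_\beta$ give $s(u_\beta)_\Phi\leq s(v_\beta)_\Phi$; applying (PWP$_w$) in $\mathcal{U}$ yields $u,u'\in S$ and $(w_\beta)_\Phi\in\mathcal{U}$ with $(u_\beta)_\Phi\leq u(w_\beta)_\Phi$ and $u'(w_\beta)_\Phi\leq(v_\beta)_\Phi$ and $su\leq su'$, so that $(u,u')\in R^{\leq}(s,s)$. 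Pulling back to a set $D\in\Phi$, then using the interpolation property on $(u,u')$ and choosing $\beta\in D$ exceeding the resulting index $\sigma$, I reach the same kind of contradiction as before.

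For $(4)\Rightarrow(1)$ I would write down the axioms explicitly. Here the situation is cleaner than in the (P$_w$) case: since $R^{\leq}(s,s)$ always contains $(1,1)$ it is never empty, so there is no need for the $(\forall x)(\forall y)(sx\not\leq sy)$ alternative, and for each $s$ we simply fix the finite list $(u_{\rho 1},v_{\rho 1}),\hdots,(u_{\rho n(\rho)},v_{\rho n(\rho)})$ from $(4)$ and set
\[\Omega_s:=(\forall x)(\forall y)\big(sx\leq sy\rightarrow(\exists z)(\bigvee_{i=1}^{n(\rho)}(x\leq u_{\rho i}z\wedge v_{\rho i}z\leq y))\big).\]
Taking $\sum_{\mathcal{PWP}_w}=\{\Omega_s:s\in S\}$, one checks that every $A\in\mathcal{PWP}_w$ is a model (given $sa\leq sb$, apply (PWP$_w$) to obtain $(u,u')\in R^{\leq}(s,s)$ and $c$ with $a\leq uc$, $u'c\leq b$, then interpolate $u,u'$ through some $(u_{\rho i},v_{\rho i})$ via an $h$ and take $z=hc$), and conversely that any model satisfies (PWP$_w$) directly from the definition of the $u_{\rho i},v_{\rho i}$.

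The main obstacle, as in Theorem~\ref{thm:pwconditionsposet}, is the $(3)\Rightarrow(4)$ step: one must be careful in the cardinality/minimality argument to ensure that the element $(u,u')$ produced by Condition (PWP$_w$) in the ultrapower genuinely lies in $R^{\leq}(s,s)$ and that the two-sided interpolation inequalities compose correctly when chained through $h$ and $w_\beta$, so that the inequalities $u_\beta\leq u_\sigma hw_\beta$ and $v_\sigma hw_\beta\leq v_\beta$ contradict the minimality condition. Everything else is routine adaptation of the (P$_w$) argument with $t$ replaced by $s$.
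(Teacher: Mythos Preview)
Your proposal is correct and follows exactly the route the paper indicates: the paper gives no independent proof of Theorem~\ref{thm:pwpw} but simply says to follow the lines of Theorem~\ref{thm:pwconditionsposet} with $t=s$, noting that $R^{\leq}(s,s)\neq\emptyset$, which is precisely what you do.

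One small slip: in your $(2)\Rightarrow(3)$ step, the parenthetical witnesses $u=u'=1$ do not work, since from $sb\leq sb'$ in $S$ you cannot in general find $b''$ with $b\leq b''\leq b'$. The correct (and equally trivial) witness that $S$ itself satisfies (PWP$_w$) is $u=b$, $u'=b'$, $b''=1$: then $b\leq u\,b''$, $u'b''\leq b'$ are equalities and $su=sb\leq sb'=su'$ is the hypothesis. With that adjustment everything goes through exactly as you describe.
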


\subsection{Axiomatisability of Condition $(W)$}\label{sec:axiocondwsposet}
 
 For our final class defined by an interpolation condition, we consider
 $\mathcal{W}$.

\begin{thm}

The following conditions are equivalent for an pomonoid $S$:

(1) the class $\mathcal{W}$ is axiomatisable;

(2) the class $\mathcal{W}$ is closed under ultraproducts; 

(3) every ultrapower of $S$ lies in $\mathcal{W}$;

(4) for any $s, t \in S$ there exists   an integer $n\geq 0$, $$p_1,\cdots,p_n\in sS\mbox{ and }q_1,\cdots,q_n\in tS$$ such that for
all $i\in\{ 1,\hdots ,n\}$ we have $ p_i \leq q_i$, and if  $ s u \leq tv $ then there
exists $i \in \{ 1, \cdots, n \} $ and $ z \in S$ with  
\[su \leq  p_i z\mbox{ and }
 q_i z \leq tv .\]

\end{thm}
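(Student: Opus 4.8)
The plan is to prove the cycle $(1)\Rightarrow(2)\Rightarrow(3)\Rightarrow(4)\Rightarrow(1)$, following the pattern of Theorem~\ref{thm:pwconditionsposet} for Condition (P$_w$) but exploiting a simplification peculiar to Condition (W). The implication $(1)\Rightarrow(2)$ is immediate from \L os's Theorem. For $(2)\Rightarrow(3)$ I first note that $S$, viewed as a left $S$-poset, satisfies Condition (W): if $su\leq tv$ with $u,v\in S$ then $p=su\in sS$, $q=tv\in tS$ and $w=1$ give $p\leq q$, $su\leq pw$ and $qw\leq tv$. As every ultrapower of $S$ is an ultraproduct of copies of $S$, closure of $\mathcal{W}$ under ultraproducts places every ultrapower of $S$ in $\mathcal{W}$.

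The substance of the proof is $(3)\Rightarrow(4)$, where I would use the finite-subsets construction of Theorems~\ref{thm:(EP)} and~\ref{thm:(pwp)} rather than the uniform ultrafilter of Theorem~\ref{thm:pwconditionsposet}. Fix $s,t\in S$ and set $P=\{(p,q):p\in sS,\,q\in tS,\,p\leq q\}$, the admissible witnesses. Supposing $(4)$ fails for $s,t$, for every finite $F\subseteq P$ there is a pair $(u_F,v_F)$ with $su_F\leq tv_F$ for which no $(p,q)\in F$ and $z\in S$ satisfy $su_F\leq pz$ and $qz\leq tv_F$. Let $J$ be the set of finite subsets of $P$ and choose an ultrafilter $\Phi$ on $J$ containing every $J_{(p,q)}=\{F\in J:(p,q)\in F\}$ (these have the finite intersection property). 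Forming the ultrapower $\mathcal{U}=S^{J}/\Phi$, the inequalities $su_F\leq tv_F$ yield $s(u_F)_\Phi\leq t(v_F)_\Phi$, and since $(3)$ gives $\mathcal{U}\in\mathcal{W}$ there are $p\in sS$, $q\in tS$ with $p\leq q$ and a $(w_F)_\Phi\in\mathcal{U}$ with $s(u_F)_\Phi\leq p(w_F)_\Phi$ and $q(w_F)_\Phi\leq t(v_F)_\Phi$. The decisive observation is that $sS$ and $tS$ are subsets of the monoid $S$, so $p$ and $q$ are \emph{fixed} elements of $S$; this is exactly what removes the need for the minimality (antichain) device used for (P$_w$). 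On some $D\in\Phi$ we get $su_F\leq pw_F$ and $qw_F\leq tv_F$ for all $F\in D$, whence choosing $F\in D\cap J_{(p,q)}$ produces a member $(p,q)$ of $F$ covering $(u_F,v_F)$ through $z=w_F$ --- contradicting the choice of $(u_F,v_F)$. Therefore $(4)$ holds.

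For $(4)\Rightarrow(1)$ I would exhibit axioms $\Omega_\rho$ indexed by $\rho=(s,t)\in S\times S$, in the style of the (P$_w$) argument. If $R^{\leq}(s,t)=\emptyset$ I set $\Omega_\rho:=(\forall x)(\forall y)(sx\not\leq ty)$; any $A\in\mathcal{W}$ satisfies this, since an inequality $sa\leq tb$ would, by (W), produce $p\in sS$, $q\in tS$ with $p\leq q$ and hence a pair in $R^{\leq}(s,t)$. If $R^{\leq}(s,t)\neq\emptyset$ I take the list $(p_{\rho,1},q_{\rho,1}),\dots,(p_{\rho,n(\rho)},q_{\rho,n(\rho)})$ from $(4)$ and set
\[\Omega_\rho:=(\forall x)(\forall y)\Big(sx\leq ty\rightarrow(\exists z)\bigvee_{i=1}^{n(\rho)}\big(sx\leq p_{\rho,i}z\,\wedge\,q_{\rho,i}z\leq ty\big)\Big).\]
Checking that $\Sigma_{\mathcal{W}}=\{\Omega_\rho:\rho\in S\times S\}$ axiomatises $\mathcal{W}$ is then routine: a model of $\Omega_\rho$ satisfies (W) with $p=p_{\rho,i}\in sS$ and $q=q_{\rho,i}\in tS$, while for $A\in\mathcal{W}$ with $sa\leq tb$, Condition (W) gives $a''$ and $p=su_0$, $q=tv_0$ with $su_0\leq tv_0$, and feeding $(u_0,v_0)$ into $(4)$ and acting on the right by $a''$ supplies the disjunct with $z=z_0a''$.

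The one genuinely delicate step is $(3)\Rightarrow(4)$, and within it the recognition that the witnesses $p,q$ delivered by Condition (W) in the ultrapower are fixed elements of $S$ rather than of $\mathcal{U}$; granting this, the finite-subsets contradiction closes at once with no cardinality argument.
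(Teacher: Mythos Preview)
Your proof is correct. The overall cycle and the axiom scheme in $(4)\Rightarrow(1)$ match the paper's treatment, but your argument for $(3)\Rightarrow(4)$ takes a genuinely different route. The paper follows the pattern of Theorem~\ref{thm:pwconditionsposet}: it chooses a covering set $\{(u_\beta,v_\beta):\beta<\gamma\}\subseteq sS\times tS$ of minimal cardinality, arranges by minimality that no later pair is ``absorbed'' by an earlier one, takes a uniform ultrafilter on $\gamma$, and then uses the witnesses $p,q$ coming from Condition~(W) in the ultrapower to push some $(u_\beta,v_\beta)$ below an earlier $(u_\sigma,v_\sigma)$, contradicting the antichain property. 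You instead use the finite-subsets ultrafilter device of Theorems~\ref{thm:(EP)} and~\ref{thm:(pwp)}: index by finite $F\subseteq P$, choose bad pairs $(u_F,v_F)$ for each $F$, and obtain a single $(p,q)\in P$ from Condition~(W) in the ultrapower that covers some $(u_F,v_F)$ with $(p,q)\in F$. Your observation that $p,q$ land in $S$ (not merely in the ultrapower) is exactly what makes this work and lets you avoid any cardinality or minimality argument; the resulting proof is shorter and more elementary than the paper's. One small remark: the same observation applies equally to Condition~(P$_w$), since the witnesses $u,u'$ there also lie in $S$, so the simplification is not peculiar to (W) as your aside suggests --- but this does not affect the correctness of what you wrote.
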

\begin{proof}
$(3) \Rightarrow (4)$ Suppose that every ultrapower of $S$ has
(W) but that $(4)$ fails. Then there exists $s,t \in S$ such that there does not exist
any finite list $p_1,\hdots, p_n,q_1,\hdots,q_n$ satisfying the conditions
of $(4)$.

Let $ \gamma$ be a cardinal minimal with respect to the existence of a set $\{(u_{\beta},v_{\beta}): \beta < \gamma \}$ such that $ u_{\beta} \in s S, \,  v_{\beta} \in t S \, , u_{\beta} \leq v_{\beta} $ and if $  su \leq tv $ then 
there exists $ \beta<\gamma$ and $ z \in S$ with $ su \leq u_{\beta} z, \, v_{\beta} z  \leq t v $. 

Certainly $\gamma $ exists since we could consider $  \{ ( sx , ty ): x, \, y \in S, sx \leq ty \}$. We are assuming that $ \gamma $ is infinite. By the minimality of $\gamma$ we
can assume that it is not true that for any $\gamma >\beta>\sigma$, we have both
$ u_{\beta} \leq u_{\sigma} \, k $ and $ v_{\sigma} \, k \leq v_{\beta} $.

Let $\Phi$ be a uniform ultrafilter on $\gamma$ and let $\mathcal{U}= S ^{\gamma}/ \Phi$; by assumption $\mathcal{U}$ satisfies Condition (W).

Since each $u_{\beta}\in sS, u_{\beta}=sx_{\beta}$ for some $x_{\beta}\in S$; similarly,
$v_{\beta}=ty_{\beta}$ for some $y_{\beta}\in S$. Now
$u_{\beta}\leq v_{\beta}$ for all $\beta<\gamma$, so that
$s(x_{\beta})_{\Phi}\leq t(y_{\beta})_{\Phi}$ and as
$\mathcal{U}$ has (W), there exists $(w_{\beta})_{\Phi}\in\mathcal{U}$,
$p\in sS$ and $q\in tS$ with
\[p\leq q, s(x_{\beta})_{\Phi}\leq p(w_{\beta})_{\Phi}\mbox{ and }
q(w_{\beta})_{\Phi}\leq t(y_{\beta})_{\Phi}.\]
Let $D\in \Phi$ be such that
\[sx_{\beta}\leq pw_{\beta}\mbox{ and }qw_{\beta}\leq ty_{\beta}\]
for all $\beta\in D$. As $p\leq q$ there exists $\sigma<\gamma$ and $z\in S$ with
\[p\leq u_{\sigma}z\mbox{ and }v_{\sigma}z\leq q.\]
Hence, choosing $\beta\in D$ with $\beta>\sigma$,
\[u_{\beta}=sx_{\beta}\leq pw_{\beta}\leq u_{\sigma}zw_{\beta}
\mbox{ and }v_{\sigma}zw_{\beta}\leq qw_{\beta}\leq ty_{\beta}=v_{\beta},\]
a contradiction. Hence $(4)$ holds.

$(4) \Rightarrow (3)$ Suppose now that $(4)$ holds. For each
$\rho=(s,t)\in S\times S$ let
\[p_{\rho 1},\hdots ,p_{\rho n(\rho)}, q_{\rho 1},\hdots ,q_{\rho n(\rho)}\]
be the list of elements of $S$ guaranteed by $(4)$. If
$n(\rho)=0$, let
\[\Omega_{\rho}:(\forall x)(\forall y)(sx\not\leq ty).\]
If
$n(\rho)\geq 1$, let
$$\Omega_{\rho}:=( \forall x)(\forall y)\big( sx \leq ty \rightarrow (\exists z)( \bigvee ^{\rho(n)}_{i=1} ( s x \leq p_{\rho i} z \wedge  q_{\rho i}z \leq  t y ))\big)$$
and let
\[\sum_{\mathcal{W}}= \{ \Omega_{\rho}:\rho\in S\times S\}.\]

We claim that $ \sum_{\mathcal{W}} $ axiomatises $\mathcal{W}$.

Let $A\in\mathcal{W}$ and $\rho=(s,t)\in S\times S$. If $n(\rho)=0$ and
$sa\leq tb$, for some $a,b\in A$, then, in particular, $su\leq tv$ for some
$u,v \in S$. By as $(4)$ holds this gives that $n(\rho)\geq 1$, a contradiction.
Hence $A\models \Omega_{\rho}$.

Suppose now that $n(\rho)\geq 1$, so that
$$\Omega_{\rho}=( \forall x)(\forall y)\big( sx \leq ty \rightarrow (\exists z)( \bigvee ^{\rho(n)}_{i=1} ( s x \leq p_{\rho i} z \wedge  q_{\rho i}z \leq  t y ))\big).$$
If $sa\leq tb$ for some $a,b\in A$, then there exists $p\in sS,q\in tS$ 
and $c\in A$ such that
\[p\leq q, sa\leq pc\mbox{ and }qc\leq tb.\]
By $(4)$, \[p\leq p_{\rho i}z\mbox{ and }q_{\rho i}z\leq q\]
for some $i\in\{ 1,\hdots,n(\rho)\}$ and $z\in S$. Hence
\[sa\leq p_{\rho i}zc\mbox{ and }q_{\rho i}zc\leq tb\]
so that $A\models \Omega_{\rho}$. Hence $A\models \sum_{\mathcal{W}}$.

Conversely, if $A\models \sum_{\mathcal{W}}$ and $sa\leq tb$ for some
$\rho=(s,t)\in S\times S$ and $a,b\in A$, then we must have $n(\rho)\geq 1$ and
$$\Omega_{\rho}=( \forall x)(\forall y)\big( sx \leq ty \rightarrow (\exists z)( \bigvee ^{\rho(n)}_{i=1} ( s x \leq p_{\rho i} z \wedge  q_{\rho i}z \leq  t y ))\big).$$
Then 
\[sa\leq p_{\rho i}c\mbox{ and }q_{\rho i}c\leq tb\]
for some $i\in \{ 1,\hdots ,n(\rho)\}$ and $c\in A$. By choice of $p_{\rho i},q_{\rho i}$, we see
that $A\in\mathcal{W}$. Hence $\sum_{\mathcal{W}}$ axiomatises $\mathcal{W}$ as required.

\end{proof}

\section{Axiomatisability of Projective and Free $S$-posets}\label{sec:projfree}

The axiomatisability problems for $\mathcal{P}r$ and $\mathcal{F}r$ are easily solved
from the results of the previous section and the answers to the corresponding questions in
the $S$-act case. 

\subsection{Axiomatisability of $\mathcal{P}r$}\label{subsec:p}

The question of the axiomatisability of $\mathcal{P}r$ was addressed
in \cite{step:2009}. Without giving much detail, Pervukhin and Stepanova
indicate that if every ultrapower of a pomonoid $S$ is projective as 
a left $S$-poset, then it can be argued, following the corresponding proofs
for $S$-acts, that $S$ is poperfect, which here can be taken to mean $\mathcal{SF}=\mathcal{P}r$ in the class of left $S$-posets. In \cite{step:2009}
this is then utilised to show that $\mathcal{P}r$ is axiomatisable if and only if
$\mathcal{SF}$ is axiomatisable and $\mathcal{SF}=\mathcal{P}r$. 
 Notice that
in \cite{step:2009}, the classes $\mathcal{SF}$ and $\mathcal{P}r$ are
denoted by $\mathcal{SF}^{<}$ and $\mathcal{P}^{<}$, to distinguish them from
the classes of strongly flat and projective left $S$-{\em acts}, a convention we have not followed here.  

The current authors have
shown that a pomonoid $S$ is left perfect as a {\em monoid} if and only if it is left
perfect as a {\em pomonoid} \cite{gouldshaheen}. With this in mind we can give a short and direct proof of the following.

\begin{thm}\label{thm:proj} The following are equivalent for a pomonoid $S$:

(1) the class $\mathcal{P}r$ is axiomatisable;

(2) every ultrapower of $S$ is projective as a left $S$-poset;

(3) the class $\mathcal{SF}$ is axiomatisable and $\mathcal{SF}=\mathcal{P}r$.

\end{thm}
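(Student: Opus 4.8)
The plan is to prove the cycle of implications $(1)\Rightarrow(2)\Rightarrow(3)\Rightarrow(1)$, exploiting the bridge between poperfectness and ordinary perfectness supplied by \cite{gouldshaheen}. First I would establish $(1)\Rightarrow(2)$: if $\mathcal{P}r$ is axiomatisable then, by \L os's Theorem (Theorem~\ref{thm:los}), $\mathcal{P}r$ is closed under ultraproducts; since $S$ itself is free, hence projective, as a left $S$-poset, every ultrapower of $S$ lies in $\mathcal{P}r$ and is therefore projective.

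The core of the argument is $(2)\Rightarrow(3)$. Assuming every ultrapower of $S$ is projective as a left $S$-poset, I would first argue that $S$ is left poperfect, meaning $\mathcal{SF}=\mathcal{P}r$ in $\mathbf{S}$-\textbf{Pos}. The idea, following the $S$-act template referred to in the excerpt, is that the projectivity of all ultrapowers of $S$ forces a descending-chain-type finiteness condition on principal left ideals, which is exactly poperfectness; here I would invoke \cite{gouldshaheen} to identify poperfectness with ordinary left perfectness of the underlying monoid, so the established $S$-act machinery can be transported verbatim. Once $\mathcal{SF}=\mathcal{P}r$ is in hand, it remains to show $\mathcal{SF}$ is axiomatisable. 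By Theorem~\ref{thm:los} it suffices to show $\mathcal{SF}$ is closed under ultraproducts, and by Theorem~\ref{pande}(3) this reduces to verifying that for every $s,t\in S$ the sets $R^{\leq}(s,t)$ and $r^{\leq}(s,t)$ are empty or finitely generated. I would derive this finite-generation property from the perfectness of $S$: left perfectness of the monoid underlying $S$ yields the required finiteness on $R^{\leq}(s,t)$ and $r^{\leq}(s,t)$, completing the verification that $\mathcal{SF}$ is axiomatisable.

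Finally, $(3)\Rightarrow(1)$ is immediate: if $\mathcal{SF}$ is axiomatisable by a set of sentences $\Sigma$ and $\mathcal{SF}=\mathcal{P}r$, then the very same $\Sigma$ axiomatises $\mathcal{P}r$.

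The main obstacle I expect is the first half of $(2)\Rightarrow(3)$, namely deducing poperfectness from the projectivity of all ultrapowers of $S$. This is where the order structure genuinely intrudes, since one must be careful that the $S$-act proof scheme (which passes through properties of chains of principal ideals and the structure of projective covers) survives in the ordered setting. The decisive simplification is the identification in \cite{gouldshaheen} of left poperfectness with left perfectness of the monoid; with that result the ordered difficulties collapse and the remaining steps become routine transcriptions of the $S$-act arguments together with the finite-generation criteria of Theorem~\ref{pande}.
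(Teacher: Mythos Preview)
Your overall cycle $(1)\Rightarrow(2)\Rightarrow(3)\Rightarrow(1)$ matches the paper, and the use of \cite{gouldshaheen} to pass between perfect and poperfect is exactly the intended bridge. Two points deserve sharpening.

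First, in $(2)\Rightarrow(3)$ you describe the passage to the $S$-act setting only in outline. The paper makes the mechanism concrete: the ultrapower $\mathcal{U}'=S^{\gamma}/\Phi$ as a left $S$-poset has the \emph{same underlying $S$-act} as the ultrapower $\mathcal{U}$ of $S$ as a left $S$-act, the order being the only extra datum. Since $\mathcal{U}'$ is projective as an $S$-poset, Theorem~\ref{thm:freeproj}(ii) gives an $S$-po-isomorphism $\theta:\mathcal{U}'\to\bigcup_i Se_i$; forgetting the order, $\theta$ is an $S$-act isomorphism onto a coproduct of cyclic projectives, so $\mathcal{U}$ is projective as an $S$-act. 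One then quotes \cite[Theorem~8.6]{gouldpalyutin} to get that $S$ is left perfect, and \cite{gouldshaheen} to conclude poperfectness, i.e.\ $\mathcal{SF}=\mathcal{P}r$. Your narrative has the right ingredients but does not isolate this forgetful-functor step, which is precisely where the order ``collapses''.

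Second, your argument for axiomatisability of $\mathcal{SF}$ has a slip and a gap. The slip: \L os's Theorem gives only that axiomatisability implies closure under ultraproducts, not the converse, so ``by Theorem~\ref{thm:los} it suffices to show $\mathcal{SF}$ is closed under ultraproducts'' is backwards. The gap: you then assert that left perfectness yields finite generation of $R^{\leq}(s,t)$ and $r^{\leq}(s,t)$, but this is not immediate and you give no argument; these are \emph{ordered} objects and do not obviously inherit the finiteness from unordered perfectness. The paper avoids this by citing \cite[Theorem~4.8]{step:2009} and \cite{shaheen:2010}, where the needed implication (from the ultrapower hypothesis, or from perfectness, to axiomatisability of $\mathcal{SF}$) is established directly. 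If you want a self-contained route, the cleanest is: hypothesis $(2)$ gives that every ultrapower of $S$ is projective, hence strongly flat, and the full version of Theorem~\ref{pande} in \cite{step:2009} includes ``every ultrapower of $S$ is strongly flat'' among its equivalent conditions.
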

\begin{proof} Clearly we need only prove that $(2)$ implies $(3)$; suppose that $(2)$ holds. 

Let $\mathcal{U}= S^{\gamma}/\Phi$ be an ultrapower of $S$ {\em as a left $S$-act}, then $$\mathcal{U}= \prod_{\gamma \in \wedge} S^{\gamma}/ \equiv$$ where $$(a_i) \equiv (b_i) \Leftrightarrow \{ i: a_i =b_i \} \in \Phi$$
and $$s(a_i)_{\Phi} = (sa_i)_{\Phi} \,\,\mbox {is  a well-defined $S$-action}.  $$

Consider the corresponding ultrapower of $S$ as a left $S$-poset, that is, $\mathcal{U}^{'}= S^{\gamma}/\Phi$. Here $\equiv$ and the $S$-action are defined as before and $$(a_i)_{\Phi} \leq (b_i)_{\Phi} \Leftrightarrow \{ i : a_i \leq b_i \} \in \Phi\hfill{\phantom{mm}}(*).$$

In other words $\mathcal{U}$ is $\mathcal{U}^{'}$ equipped with the partial order defined as in $(*)$.

We are supposing $\mathcal{U}^{'}$ is projective as a left $S$-poset, that is, there exists a disjoint union ${\bigcup_{i \in I}} S e_{i}$ where $e_i$s are idempotents, and an $S$-po-isomorphism $\theta:\mathcal{U}^{'} \to \bigcup_{i \in I} S e_i$. Regarding $\bigcup_{i \in I} S e_i$ as an $S$-act, $\theta:\mathcal{U}\rightarrow
\bigcup_{i\in I}Se_i$  is certainly an
$S$-act isomorphism.   We can conclude that every ultrapower of $S$ as a left $S$-act is projective. From \cite[Theorem 8.6]{gouldpalyutin},
$S$ is left perfect, so from \cite[Theorem 6.3]{gouldshaheen},
$S$ is left poperfect. Hence $\mathcal{SF}=\mathcal{P}r$. From \cite[Theorem 4.8]{step:2009} and \cite{shaheen:2010}, we also have that $\mathcal{SF}$ is axiomatisable.
Hence $\mathcal{P}r$ is axiomatisable.
\end{proof}

\subsection{Axiomatisability of $\mathcal{F}r$.}\label{subsec:free}

To explain our result we need to recall the following definition from \cite{gould:tartu}.
Let $e  \in E(S)$, where $E(S)$ is the set of idempotents of a monoid $S$, and let $a \in S$. We say that $a  = xy$  is an {\em $e$-good factorisation  of $a$ through $x$} if $ y \not = wz$ for any $w,z$ with $e= xw$ and $e\, \mathcal{L}\, w$
(see \cite{gould:tartu}).

\begin{thm}\label{thm:free} The following conditions are equivalent for a pomonoid $S$:

(1) every ultrapower of left $S$-poset $S$ is free;

(2) $\mathcal{P}r$ is axiomatisable and $S$ satisfies $(*)$: for all $e \in E(S) \setminus {1}$, there exists a finite set $f \in S$ such that any $ a \in S$ has an $e$-good factorization through $x$, for some $x \in f$;

(3) the class $\mathcal{F}r$ is axiomatisable. 
\end{thm}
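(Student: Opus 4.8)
The plan is to establish the cycle $(3)\Rightarrow(1)\Rightarrow(2)\Rightarrow(3)$, reducing wherever possible to Theorem~\ref{thm:proj} and to the corresponding results for $S$-acts in \cite{gould:tartu}, while invoking the structure theorem for free and projective $S$-posets (Theorem~\ref{thm:freeproj}). The implication $(3)\Rightarrow(1)$ is immediate: since ${}_SS$ is free on a single generator, if $\mathcal{F}r$ is axiomatisable then by \L os's Theorem (Theorem~\ref{thm:los}) it is closed under ultraproducts, so every ultrapower of $S$ lies in $\mathcal{F}r$, that is, is free.

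For $(1)\Rightarrow(2)$ I would argue in two parts. Every free $S$-poset is projective (as noted after Theorem~\ref{thm:freeproj}), so $(1)$ forces every ultrapower of $S$ to be projective, whence $\mathcal{P}r$ is axiomatisable by $(2)\Rightarrow(3)$ of Theorem~\ref{thm:proj}. For the condition $(*)$ I would pass to the unordered setting: every ultrapower of the $S$-\emph{act} $S$ is the forgetful reduct of an ultrapower of the $S$-\emph{poset} $S$ taken over the same ultrafilter, and by Theorem~\ref{thm:freeproj}$(i)$ a free $S$-poset forgets to a free $S$-act. Hence $(1)$ gives that every ultrapower of the $S$-act $S$ is free, and the corresponding $S$-act theorem of \cite{gould:tartu} then yields exactly $(*)$, which is a condition on the $S$-act structure of $S$ alone.

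The substantive direction is $(2)\Rightarrow(3)$, where I would build an explicit axiom set. Let $\Sigma_{\mathcal{P}r}$ axiomatise $\mathcal{P}r$. For each $e\in E(S)\setminus\{1\}$ fix the finite set $f_e\subseteq S$ supplied by $(*)$ and encode the assertion that every element admits an $e$-good factorisation through some member of $f_e$ as a single sentence $\sigma_e$ of $L^{\leq}_S$, namely a finite disjunction over $x\in f_e$ of existentially quantified formulae recording the factorisation together with the negative clauses (involving $\mathcal{L}$) that define $e$-goodness. I claim that
\[ \Sigma_{\mathcal{F}r}:=\Sigma_{\mathcal{P}r}\cup\{\sigma_e : e\in E(S)\setminus\{1\}\} \]
axiomatises $\mathcal{F}r$. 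That every free $S$-poset satisfies each $\sigma_e$ follows by applying $(*)$ coordinatewise inside the generating copies of ${}_SS$ furnished by Theorem~\ref{thm:freeproj}$(i)$. Conversely, a model $A$ of $\Sigma_{\mathcal{F}r}$ is projective, so $A\cong\bigsqcup_i Se_i$ with the $Se_i$ pairwise incomparable (Theorem~\ref{thm:freeproj}$(ii)$); I would show that if some $e_i\neq 1$ then the component $Se_i$ contains an element admitting no $e_i$-good factorisation through any member of $f_{e_i}$, contradicting $\sigma_{e_i}$. Thus every $e_i$ may be taken to be $1$, so $A$ is a disjoint union of copies of ${}_SS$, hence free.

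I expect the main obstacle to be precisely the converse half of $(2)\Rightarrow(3)$: verifying that a projective $S$-poset satisfying all the $\sigma_e$ can contain no component $Se$ with $e\neq 1$. This is where the $e$-good factorisation condition, through its interaction with $\mathcal{L}$, is engineered to do its work, and it must be checked that the argument transfers faithfully from the $S$-act case of \cite{gould:tartu} to the ordered setting. The one genuinely order-theoretic point is that, once all idempotents are shown to equal $1$, the order inherited on the resulting disjoint union of copies of ${}_SS$ is exactly the free order of Theorem~\ref{thm:freeproj}$(i)$; this is automatic because projective components are pairwise incomparable and each copy of ${}_SS$ carries precisely the order of $S$.
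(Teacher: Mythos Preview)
Your cycle $(3)\Rightarrow(1)\Rightarrow(2)\Rightarrow(3)$ and the reductions to Theorem~\ref{thm:proj} and to the $S$-act results in \cite{gould:tartu} are exactly the paper's approach, and your treatment of $(3)\Rightarrow(1)$ and $(1)\Rightarrow(2)$ is correct and matches the paper.

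There is, however, a genuine gap in your $(2)\Rightarrow(3)$, and it is not the one you flag. The problem is not the converse verification but the very definition of $\sigma_e$: you have not explained why the ``negative clauses that define $e$-goodness'' can be written as a first-order formula at all. Recall that $a=xy$ is $e$-good through $x$ if $y\neq wz$ for \emph{every} $w,z$ with $e=xw$ and $e\,\mathcal{L}\,w$. For fixed $x\in f_e$ this is a conjunction indexed by the set $\{w\in S: e=xw,\ e\,\mathcal{L}\,w\}$, and nothing in the hypothesis $(*)$ alone guarantees this set is finite. If it is infinite, your $\sigma_e$ is not a sentence of $L^{\leq}_S$.

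The paper closes this gap by extracting a further finiteness consequence from the axiomatisability of $\mathcal{P}r$: since every ultrapower of $S$ is projective as an $S$-poset, hence as an $S$-act, \cite[Lemma~8.4]{gouldpalyutin} gives that for every $e\in E(S)$ and $u\in S$ there are only finitely many $x\in S$ with $e=ux$. This is precisely what makes the conjunction over $w$ finite and allows the sentences $\varphi_e$ of \cite{gouldpalyutin} to be written down; the paper then cites \cite[Theorem~9.1]{gouldpalyutin} for the fact that $\Sigma_{\mathcal{P}r}\cup\{\varphi_e:e\in E(S)\setminus\{1\}\}$ axiomatises $\mathcal{F}r$. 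You should insert this step before attempting to define your $\sigma_e$. Once it is in place, your remaining plan (including the order-theoretic remark about incomparable components) goes through and coincides with the paper's argument.
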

\begin{proof} $(1)\Rightarrow (2)$ Since every ultrapower of $S$ is free as a left $S$-poset, it is free as a left $S$-act with the same argument as in Theorem~\ref{thm:proj}. By \cite[Theorem 5.3]{gould:tartu}, $S$ satisfies $(*)$. Also by 
Theorem~\ref{thm:proj}, $\mathcal{P}r$ is axiomatisable.

$(2)\Rightarrow (3)$ If $\mathcal{P}r$ is axiomatisable, then every ultrapower of copies of 
$S$ is projective as a left $S$-poset, and hence as a left $S$-act. From
\cite[Lemma 8.4]{gouldpalyutin}, it follows that for any $e\in E(S)$ and $u\in S$, there are
only finitely many $x\in S$ such that $e=ux$. This permits us to define the sentences
$\varphi_e$ as in \cite{gouldpalyutin}. Let
$\sum_{\mathcal{P}r}$ be the set of sentences axiomatising the projective left
$S$-posets. Then, as in \cite[Theorem 9.1]{gouldpalyutin}, 
\[\sum_{\mathcal{P}r}\cup\big\{ \varphi_e:e\in E(S)\setminus\{ 1\}\big\}.\]
axiomatises $\mathcal{F}r$.

\end{proof}

\section{Some Open Problems}\label{sec:open}

We aim to axiomatise the class of left $S$-posets satisfying 
Condition (WP), and (WP)$_{w}$. The finitary conditions that arise in
axiomatising classes of $S$-posets, as in the case for $M$-acts, are
related
to more standard finitary conditions such as chain conditions. We aim to
investigate these connections, particularly in the context of invese
monoids equipped with the natural partial order. For examples in the
unordered case, we refer the reader to \cite{gould:tartu}.

\end{document}